\newtheorem{thm}{Theorem}[section]
\newtheorem{cor}[thm]{Corollary}
\newtheorem{lem}[thm]{Lemma}
\newtheorem{prop}[thm]{Proposition}
\theoremstyle{remark}
\newtheorem{rmk}[thm]{Remark}
\theoremstyle{definition}
\newtheorem{defi}[thm]{Definition}
\newcommand{\CBbb}{\mathbb C}
\newcommand{\NBbb}{\mathbb N}
\newcommand{\RBbb}{\mathbb R}
\newcommand{\ZBbb}{\mathbb Z}
\newcommand{\Ecal}{\mathcal E}
\newcommand{\Fcal}{\mathcal F}
\newcommand{\Kcal}{\mathcal K}
\newcommand{\Lcal}{\mathcal L}
\newcommand{\Mcal}{\mathcal M}
\newcommand{\Ocal}{\mathcal O}
\newcommand{\GL}{\mathsf{GL}}
\newcommand{\SU}{\mathsf{SU}}
\DeclareMathOperator{\End}{End}
\DeclareMathOperator{\Hom}{Hom}
\DeclareMathOperator{\id}{id}
\DeclareMathOperator{\rank}{rank}
\DeclareMathOperator{\dvol}{dvol}
\DeclareMathOperator{\ad}{ad}
\DeclareMathOperator{\tr}{tr}
\DeclareMathOperator{\Sing}{Sing}
\newcommand{\dbar}{\bar\partial}
\numberwithin{equation}{section}
\begin{document}
\title[Vafa-Witten equations]{On Vafa-Witten equations over K\"ahler manifolds}
\author[Chen]{Xuemiao Chen}

\begin{abstract}
In this paper, we study the analytic properties of solutions to the Vafa-Witten equation over a compact K\"ahler manifold. Simple obstructions to the existence of nontrivial solutions are identified. The gauge theoretical compactness for the $\CBbb^*$ invariant locus of the moduli space is shown to behave similarly as the Hermitian-Yang-Mills connections. More generally, this holds for solutions with uniformly bounded spectral covers such as nilpotent solutions. When spectral covers are unbounded, we manage to take limits of the renormalized Higgs fields which are intrinsically characterized by the convergence of the associated spectral covers. This gives a simpler proof for Taubes' results on rank two solutions over K\"ahler surfaces together with a new complex geometric interpretation. The moduli space of $\SU(2)$ monopoles and some related examples are also discussed in the final section.
\end{abstract}

\address{Department of pure mathematics, University of Waterloo, Ontario, Canada, N2L 3G1}\email{x67chen@uwaterloo.ca}
\maketitle
\tableofcontents
\thispagestyle{empty}

\bibliographystyle{amsplain}

\section{Introduction}
The Vafa-Witten equation is on a pair $(A, \phi)$ where $A$ is a unitary connection on a unitary bundle $(E,H)$ over a compact K\"ahler manifold $(X,\omega)$ and $\phi$ is a smooth section of $\End(E) \otimes K_X$ satisfying 
\begin{itemize}
\item[(1)] $F_A^{0,2}=0$;
\item[(2)] $\sqrt{-1} \Lambda_{\omega} F_{A}+[\phi;\phi]=\mu\id$;
\item[(3)] $\dbar_A \phi=0$.
\end{itemize}
Here $K_X$ denotes the canonical line bundle of $X$ and locally if we write $\phi=M \sigma$ for some smooth $n$-form $\sigma$, then 
$$
[\phi;\phi]=[M,M^*] |\sigma|^2.
$$
In literature, the Vafa-Witten equation is a special case of the Higgs equations (\cite{LubkeTeleman:06}). We call it the Vafa-Witten equation due to that when $\dim_{\RBbb} X=4$ it is the reduction of the Vafa-Witten equation over Riemannian four manifold (\cite{VafaWitten94}). Here we mention a few important and classical special cases. When $\phi=0$, the solutions are the Hermitian-Yang-Mills connections which correspond to slope polystable vector bundles by the celebrated Donaldson-Uhlenbeck-Yau theorem (\cite{Donaldson:87a} \cite{UhlenbeckYau:86}). When $\dim_{\RBbb} X=2$, i.e., over the Riemann surfaces, this corresponds to the Hitchin equation and the space of solutions corresponds to the space of polystable Higgs bundles (\cite{Hitchin87}). In general, the solutions of the Vafa-Witten equation correspond to the slope polystable Higgs bundles $(\Ecal, \phi)$ (\cite{LubkeTeleman:06} \cite{AG:03} \cite{Lin:89}).

Now we recall some background and motivations. The Vafa-Witten equation was originally proposed and studied by Vafa and Witten as a search of evidence for S-Duality (\cite{VafaWitten94}). The solutions are given by pairs $(A,a)$ where $A$ is a unitary connection on a principal $\SU(2)$-bundle $P$ over a Riemannian four manifold $(X,g)$ and $a$ is a  section of $\Lambda^+ \otimes \ad_{P}$ that satisfy
\begin{enumerate}
\item $d_Aa=0$;
\item $F_A^+=\frac{1}{2}[a;a]$ \footnote{The original version proposed by Vafa and Witten involves one more term and we refer the readers to \cite[Page 6]{Taubes17} for related discussions.}.
\end{enumerate}
When $(X,g)$ is compact K\"ahler, the equation is reduced as above (\cite{Mares:10}).  Assuming uniform $L^2$ bounds on Higgs fields, it is known in various situations (\cite{He20} \cite{Mares:10} \cite{Tanaka:13}) that the Uhlenbeck compactness can apply.  A very intriguing feature in general is that there \emph{does not} exist such a priori $L^2$ norm bound. For this, the sequential compactness for the solutions over general Riemannian four manifolds has been considered by Taubes (\cite{Taubes17}), where he managed to take the limit of the section part by looking at the renormalized section $\frac{a}{\|a\|_{L^2}}$ and produce a notion of $\ZBbb_2$ harmonic 2-form when $\|a\|_{L^2}$ becomes unbounded in a sequence. One of the main motivations for this paper is to understand such phenomenon in the complex analytic setting. 

Recently, by purely algebraic geometric methods, the moduli space of stable Higgs pairs over a projective surface has been used to define the Vafa-Witten invariants by Tanaka and Thomas (\cite{ThomasTanaka17, ThomasTanaka19}). The moduli space admits a natural $\CBbb^*$ action which comes from rescaling the Higgs fields. The key ingredient in this approach is the use of the $\CBbb^*$ invariant locus of the moduli space. A simple linear algebra lemma shows the $\CBbb^*$ invariant locus lies in the so-called nilpotent cone. It consists of two parts. One is the so-called monopoles corresponding to nonvanishing Higgs fields while the other is the classical Hermitian-Yang-Mills connections. An observation made in this paper is that even over general K\"ahler manifolds, one can control the curvature for the nilpotent solutions to the Vafa-Witten equations. Thus the known compactness results about Hermitian-Yang-Mills connections can be applied. More generally, similar statements hold when we have uniform control of the associated spectral covers. Furthermore, the moduli space of rank two monopoles is very simple which is compact and disjoint from the compactified moduli space of Hermitian-Yang-Mills connections.  This can be potentially used to study the Vafa-Witten invariants over K\"ahler surfaces using gauge theory and give an analytic interpretation of such over projective surfaces, while over general K\"ahler manifolds, this adds more interesting phenomena to the study of Hermitian-Yang-Mills connections. 

\subsection*{Main results} Now we briefly sketch the results proved in this paper. To better explain the relation with Taubes' notion of $\ZBbb_2$ harmonic $2$-forms over K\"ahler surfaces, we gradually increase the generality of discussions. 

We first observe some consequences of the Weitzenb\"ock formula by using an auxiliary Hermitian-Yang-Mills metric $H_{K_X}$ on $K_X$ instead of the naturally induced metric from the K\"ahler metric (see Section \ref{WeinzenbockFormula}). This gives the following restrictions on the base manifold.  
\begin{prop}
Suppose there exists a solution $(A,\phi)$ with $\phi\neq 0$ to the Vafa-Witten equation. Then 
$$
\deg(K_X) \geq 0
$$   
If $\deg K_X=0$ holds, then
$$
[\phi;\phi]=0, \nabla \phi=0
$$
and 
$$c_1(X)=0,$$ 
i.e., $X$ is a Calabi-Yau manifold \footnote{The K\"ahler metric $\omega$ is not necessarily a Calabi-Yau metric}. Here $\nabla$ denote the connection on $End(E)\otimes K_X$ induced by $A$ and the Chern connection on $K_X$ given by $H_{K_X}$.
\end{prop}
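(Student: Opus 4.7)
The plan is to apply a Bochner/Weitzenböck identity to $\phi$, viewed as a holomorphic section of the Hermitian bundle $V := \End(E)\otimes K_X$ with metric induced by $H$ on $E$ and the auxiliary HYM metric $H_{K_X}$ on $K_X$. Since $\dbar_A\phi=0$, applying the bundle K\"ahler identity $[\Lambda_\omega,\dbar_A]=-\sqrt{-1}(\nabla^{1,0})^{\ast}$ to $\nabla^{1,0}\phi$ (using that $\Lambda_\omega$ annihilates $(1,0)$-forms) yields
$$
(\nabla^{1,0})^{\ast}\nabla^{1,0}\phi \;=\; \sqrt{-1}\Lambda_\omega F_V \cdot \phi,
$$
where $F_V$ is the Chern curvature of $V$. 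Pairing with $\phi$ and integrating gives
$$
\int_X |\nabla\phi|^2 \,\tfrac{\omega^n}{n!} \;=\; \int_X \langle \sqrt{-1}\Lambda_\omega F_V\phi,\phi\rangle\, \tfrac{\omega^n}{n!}.
$$

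Next, I decompose $\sqrt{-1}\Lambda_\omega F_V = [\sqrt{-1}\Lambda_\omega F_A,\,\cdot\,] + c_{K_X}\id$, where the constant $c_{K_X} := \sqrt{-1}\Lambda_\omega F_{K_X}$ is proportional to $\deg K_X$ by the HYM condition on $H_{K_X}$. Equation $(2)$ gives $\sqrt{-1}\Lambda_\omega F_A = \mu\,\id - [\phi;\phi]$, hence $[\sqrt{-1}\Lambda_\omega F_A,\phi] = -[[\phi;\phi],\phi]$. A short computation using the trace identity $\tr([A,B]C)=\tr(A[B,C])$ yields $\langle[[\phi;\phi],\phi],\phi\rangle = |[\phi;\phi]|^2$. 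Combining,
$$
\int_X |\nabla\phi|^2 \,\tfrac{\omega^n}{n!} + \int_X |[\phi;\phi]|^2 \,\tfrac{\omega^n}{n!}\;=\; c_{K_X}\int_X |\phi|^2\, \tfrac{\omega^n}{n!}.
$$
Since the left side is nonnegative, $\phi\not\equiv 0$ forces $c_{K_X}\geq 0$, i.e., $\deg K_X\geq 0$, with equality forcing $\nabla\phi=0$ and $[\phi;\phi]=0$.

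Finally, suppose $\deg K_X=0$. Then $|\phi|$ is constant and positive, so $\phi$ is nowhere zero, and $[\phi;\phi]=0$ says $\phi$ is fiberwise normal. The endomorphism $\Phi := \phi^{\ast}\phi \in \Gamma(\End(E))$ is parallel, self-adjoint, nonnegative, and holomorphic (parallelism yields $\dbar \Phi=0$). Its constant eigenvalues produce a parallel holomorphic orthogonal decomposition $E = \bigoplus_\alpha E_\alpha$, and normality $\phi\Phi=\Phi\phi$ shows $\phi(E_\alpha)\subset E_\alpha\otimes K_X$. On any $E_\alpha$ with positive eigenvalue, $\phi|_{E_\alpha}:E_\alpha\to E_\alpha\otimes K_X$ is a fiberwise isomorphism, so taking determinants yields $K_X^{\otimes\rank E_\alpha}\cong \mathcal{O}_X$; in particular $c_1(X)=0$.

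The main obstacle is the final step: extracting a nowhere-vanishing parallel holomorphic trivialization of a positive tensor power of $K_X$ from the pointwise normality and parallelism of $\phi$. This requires carefully checking that the $\Phi$-eigenspace decomposition is simultaneously parallel and holomorphic, that $\phi$ preserves it via normality, and that $\phi$ restricts to a bundle isomorphism on each summand with positive eigenvalue.
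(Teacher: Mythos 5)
Your Weitzenb\"ock argument is essentially the paper's own proof (its Proposition 3.1 and the two corollaries that follow): the K\"ahler identity applied to $\bar\partial_B\phi=0$, the substitution $\sqrt{-1}\Lambda_\omega F_A=\mu\,\mathrm{id}-[\phi;\phi]$ from the second equation, the auxiliary Hermitian-Yang-Mills metric on $K_X$ contributing the constant proportional to $\deg K_X$, and integration against $\phi$; the only cosmetic difference is that the paper records $([[\phi;\phi],\phi],\phi)\sim|[\phi;\phi]|^2$ rather than an exact equality, because it measures the form part with $\omega$ rather than $H_{K_X}$, which is irrelevant for the sign argument. Where you genuinely diverge is the Calabi-Yau conclusion when $\deg K_X=0$. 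The paper argues directly from pointwise normality: $[\phi;\phi]=0$ makes $\phi$ fiberwise diagonalizable, so $\phi\neq 0$ forces some characteristic coefficient $b_k(\phi)\in H^0(X,K_X^{\otimes k})$ to be nontrivial, and a nontrivial holomorphic section of a line bundle of degree zero with respect to $[\omega]$ is nowhere vanishing, trivializing $K_X^{\otimes k}$. You instead use the full strength of $\nabla\phi=0$: the parallel normal endomorphism $\Phi=\phi^*\phi$ (contracted with $H_{K_X}$) has constant eigenvalues, its eigenbundles are parallel holomorphic subbundles preserved by $\phi$ by normality, and on an eigenbundle $E_\alpha$ with positive eigenvalue $\phi$ is a fiberwise isomorphism, so its determinant is a nowhere-vanishing holomorphic section of $K_X^{\otimes\rank E_\alpha}$. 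Both routes are correct; yours produces the trivializing section directly and avoids the degree-zero-section lemma, at the cost of the eigenbundle machinery, while the paper's is shorter, needs only $[\phi;\phi]=0$ rather than parallelism, and stays within the spectral data $b_k(\phi)$ around which the rest of the paper is organized.
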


\begin{rmk}
    \begin{itemize}
        \item When $\dim_\CBbb X=1$, this is known (\cite{Hitchin87}).
        \item When $\deg K_X=0$, it is known that there are no monopole solutions over projective complex surfaces (\cite{ThomasTanaka17, DonaldsonThomas:98}) and $[\phi;\phi]=0$ over K\"ahler manifolds (\cite{MMS:20}).
    \end{itemize}
\end{rmk}

In particular, this gives 
\begin{cor}
Over a Calabi-Yau manifold, the solutions $(A,\phi)$ to the Vafa-Witten equation consist of a Hermitian-Yang-Mills connection $A$ and a Higgs field $\phi$ which locally can be written as $\phi=M \nu$ where $\nu$ is a nonzero holomorphic $n$-form so that 
\begin{itemize}
\item $\nabla_A M =0, |M|=1$;
\item $\nabla_{H_K} \nu =0$
\end{itemize}  
where $\nabla_{H_K}$ is induced by a flat  metric on $K_X$. In particular, by putting local choices of all such $\nu$ together, this gives a notion of $\ZBbb_k$ holomorphic $n$-form for some $k$ (see Corollary \ref{formCY}). 
\end{cor}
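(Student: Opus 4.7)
The plan is to unpack the preceding Proposition in the Calabi-Yau setting and then extract a local normal form for $(A,\phi)$, before gluing the local data into a multi-valued global object. Applying the Proposition to the present case, where $\deg K_X=0$ follows automatically from $c_1(X)=0$, gives $[\phi;\phi]=0$ and $\nabla\phi=0$. Feeding $[\phi;\phi]=0$ back into equation~(2) leaves $\sqrt{-1}\Lambda_\omega F_A=\mu\,\id$, which together with~(1) is exactly the Hermitian-Yang-Mills condition on $A$; this disposes of the first half of the statement.

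For the structure of $\phi$, I would next observe that since $c_1(K_X)=0$, any HYM metric $H_{K_X}$ on $K_X$ has flat Chern connection: its curvature is a harmonic $(1,1)$-form representing $-2\pi\sqrt{-1}c_1(K_X)=0$ and must therefore vanish. Hence around every point of $X$ we may choose a local holomorphic frame $\nu$ of $K_X$ with $\nabla_{H_K}\nu=0$. Writing $\phi=M\nu$ locally and expanding $\nabla\phi=0$ via the Leibniz rule gives $(\nabla_AM)\otimes\nu=0$, so $\nabla_AM=0$. The local identity $[\phi;\phi]=[M,M^*]|\nu|^2$ together with $|\nu|>0$ then forces $[M,M^*]=0$, and since $M$ is $\nabla_A$-parallel, $|M|$ is a constant; a single complex rescaling of $\nu$ normalizes $|M|=1$.

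It remains to assemble the local frames into a $\ZBbb_k$ holomorphic $n$-form. On any overlap, two such choices of $\nu$ differ by a holomorphic function $f$ that is simultaneously parallel (both sides are $\nabla_{H_K}$-parallel) and of modulus one (both normalizations require $|M|=1$), hence a locally constant element of $\U(1)$. That these phases actually lie in a finite cyclic subgroup $\ZBbb_k\subset \U(1)$ will follow from the compatibility between the transition rule $M\mapsto f^{-1}M$ and the fact that the normal endomorphism $M$ has a globally determined spectrum up to this rescaling ambiguity.

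The main obstacle I anticipate is the final gluing: one needs to pin down a canonical integer $k$ from the spectral data of $M$ and verify that the resulting multi-valued holomorphic section of $K_X$ matches the formal notion of a $\ZBbb_k$ holomorphic $n$-form recorded in the referenced corollary.
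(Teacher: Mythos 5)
Up to the local normal form your route coincides with the paper's: the Weitzenb\"ock formula gives $[\phi;\phi]=0$ and $\nabla\phi=0$, hence $A$ is Hermitian-Yang-Mills; the HYM metric on $K_X$ is flat because its curvature is a harmonic representative of $-2\pi\sqrt{-1}\,c_1(K_X)=0$; a parallel local frame $\nu$ of the flat Chern connection is automatically holomorphic, and the Leibniz rule yields $\nabla_A M=0$ with $|M|$ constant. This is exactly the content of Corollaries \ref{deg=0} and \ref{prop6.1}, so that part is fine.

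The genuine gap is the step you flag yourself: reducing the $\U(1)$ transition ambiguity to $\ZBbb_k$. With only the normalizations you impose ($\nu$ parallel, $|M|=1$), the residual freedom at each point is all of $\U(1)$ --- one may rotate $\nu$ by any unit constant and absorb the phase into $M$ --- so nothing you have established forces the overlap constants into a finite subgroup; ``a globally determined spectrum up to rescaling'' is not by itself a normalization. The paper's mechanism (proof of Corollary \ref{formCY}) supplies the missing device: since $[\phi;\phi]=0$, $\phi$ is pointwise diagonalizable, so $\phi\neq 0$ forces $b_k(\phi)\not\equiv 0$ for some $k\leq \rank\Ecal$; because $\deg K_X^{\otimes k}=0$, this nonzero holomorphic section is nowhere vanishing (the same observation that gives $c_1(K_X)=0$ in Corollary \ref{deg=0}), so at every point one may impose the additional pointwise normalization $b_k(M_x)\in\RBbb_+$ on the admissible local frames. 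Since $b_k(e^{\sqrt{-1}\theta}M)=e^{\sqrt{-1}k\theta}b_k(M)$, this condition has exactly $k$ admissible phases, so the transition constants lie in the $k$-th roots of unity; this produces the principal $\ZBbb_k$ bundle, the associated flat line bundle $L$, and the glued holomorphic section of $L\otimes K_X$. To close your argument you need precisely these two ingredients: a choice of $k$ with $b_k(\phi)$ nowhere zero, and the degree-$k$ homogeneity of $b_k$ under scaling as the mechanism that cuts $\U(1)$ down to $\ZBbb_k$.
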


We obtain analytic results on nilpotent solutions to the Vafa-Witten equation and more generally solutions with uniformly bounded spectral covers . Here $\phi$ if we write $\phi=Mdz_1 \wedge \cdots dz_n$, then the spectral cover is essentially given by the eigenvalues of $M$ (see definition \ref{Defi2.5}); if $M$ is nilpotent, we call $(A,\phi)$ nilpotent. 

\begin{thm}\label{NilpotentCompactness}
There exists a uniform $C^0$ bound on the Higgs fields of nilpotent solutions including monopole solutions to the Vafa-Witten equation over compact K\"ahler manifolds. In particular, the Uhlenbeck compactness can be applied; more generally this holds for a sequence of solutions to the Vafa-Witten equation with uniformly bounded spectral covers. More precisely, given any sequence of such solutions $(A_i, \phi_i)$ to the Vafa-Witten equation, passing to a subsequence, up to gauge transforms, $(A_i, \rho_i=\frac{\phi_i}{\|\phi_i\|})$ converges locally smoothly to $(A_\infty, \rho_\infty)$ over $X\setminus Z$ where 
$$
Z=\{x\in X: \lim_{r\rightarrow 0} \liminf_{i}\int_{B_x(r)} |F_{A_i}|^2 \dvol \geq \epsilon_0  \}
$$
is a codimension at least two subvariety of $X$ and $\epsilon_0$ is the regularity constant. Furthermore, $A_\infty$ defines a unique reflexive sheaf $\Ecal_\infty$ over $X$ and $\rho_\infty$ extends to a global section of $Hom(\Ecal_\infty, \Ecal_\infty) \otimes K_X$ over $X$; $Z$ is a codimension at least two subvariety that admits a decomposition 
$$
Z=\cup_k Z_k \cup \Sing(\Ecal_\infty)
$$
where $Z_k$ denotes the irreducible pure codimension two components of $Z
$ and $\Sing(\Ecal_\infty)$ denotes the codimension at least three locus where $\Ecal_\infty$ fails to be locally free. As a sequence of currents 
$$
\tr(F_{A_i} \wedge F_{A_i}) \rightarrow \tr(F_{A_\infty} \wedge F_{A_\infty})+ 8\pi^2 \sum m_k Z_k
$$
where $m_k \in \ZBbb_+$, i.e., for any compact supported $2n-4$-form $\Psi$, the following holds 
$$
\int \tr(F_{A_i} \wedge F_{A_i}) \wedge \Psi\rightarrow \int \tr(F_{A_\infty} \wedge F_{A_\infty}) \wedge \Psi+ 8\pi^2 \sum m_k \int_{Z_k} \Psi.
$$
\end{thm}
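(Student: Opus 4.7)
The plan is to prove the theorem in three stages: a uniform $C^0$ bound on $\phi$, a reduction to the standard Uhlenbeck--Bando--Siu compactness for HYM-type sequences, and the Tian bubble identity for the defect current.

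For the first stage, the key algebraic input is the inequality
$$
|[M, M^*]|^2 \,\ge\, c(r)\,|M|^4 - C(r, R)
$$
for every $r\times r$ matrix $M$ whose spectrum lies in a disc of radius $R$; this follows from a compactness/scaling argument, since $[M,M^*]=0$ forces $M$ to be normal and the eigenvalue bound then yields $|M|^2\le rR^2$. In particular it applies with $R=0$ in the nilpotent case. Written in terms of $\phi=M\sigma$ this becomes $|[\phi;\phi]|^2\ge c|\phi|^4-C$. I then apply the Bochner--Weitzenb\"ock identity to $|\phi|^2$, using the holomorphicity $\bar\partial_A\phi=0$ together with the auxiliary HYM metric $H_{K_X}$ on $K_X$ from Section~\ref{WeinzenbockFormula} so that the Ricci/canonical-bundle contribution is uniformly controlled. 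After substituting equation~(2) to trade $\sqrt{-1}\Lambda F_A$ for $-[\phi;\phi]$ (plus the constant $\mu$), the curvature commutator term becomes a multiple of $|[\phi;\phi]|^2$, which by the algebraic inequality dominates $|\phi|^4$. The resulting Bochner-type inequality combined with the maximum principle yields $\|\phi\|_{C^0}\le C$ uniformly in $i$.

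With $\|\phi_i\|_{C^0}$ controlled, equation~(2) bounds $|\sqrt{-1}\Lambda F_{A_i}|$ in $L^\infty$, and together with $F_{A_i}^{0,2}=0$ and Chern--Weil this provides a uniform $L^2$-bound on $F_{A_i}$, so the family satisfies all hypotheses of the standard $\epsilon_0$-regularity/Uhlenbeck compactness for approximate HYM connections. On the complement of the concentration set
$$
Z \,=\, \big\{x\in X : \lim_{r\to 0}\liminf_{i}\int_{B_x(r)}|F_{A_i}|^2\,\dvol \ge \epsilon_0\big\}
$$
a diagonal gauge procedure produces a smooth limit $A_\infty$, and the Bando--Siu removable-singularity theorem extends it to a Hermitian--Einstein metric on a reflexive coherent sheaf $\Ecal_\infty$. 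For the Higgs side, $\rho_i=\phi_i/\|\phi_i\|$ has $\|\rho_i\|_{L^2}=1$, satisfies $\bar\partial_{A_i}\rho_i=0$, and is uniformly $C^0$-bounded by the first stage; elliptic regularity together with the smooth convergence $A_i\to A_\infty$ on $X\setminus Z$ gives $\rho_i\to\rho_\infty$ locally smoothly there, and Hartogs-type extension across the codimension $\ge 2$ set $Z$ promotes $\rho_\infty$ to a global section of $\Hom(\Ecal_\infty,\Ecal_\infty)\otimes K_X$. The decomposition $Z=\bigcup_k Z_k\cup\Sing(\Ecal_\infty)$ merely separates the pure codimension-two bubbling locus from the higher-codimension singularities of $\Ecal_\infty$.

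The current identity in the last part is now the standard Tian bubble formula for HYM-type sequences. The difference $\tr(F_{A_i}\wedge F_{A_i})-\tr(F_{A_\infty}\wedge F_{A_\infty})$ is a closed $(2,2)$-current by Bianchi, its weak limit is supported on $Z$ by the smooth convergence off $Z$, and a test-form degree count shows that only the pure codimension-two components $Z_k$ can contribute. On each $Z_k$ the limit is a positive constant times the current of integration; integrality of $m_k\in\ZBbb_+$ comes from the fact that every ASD instanton bubble on $\CBbb^2$ carries integer $c_2$, and the universal constant $8\pi^2$ is the $L^2$ energy of a unit-$c_2$ ASD instanton. The main obstacle I expect is the first stage: while both the algebraic inequality for nilpotent (or bounded-spectrum) matrices and the Bochner formalism are standard in isolation, keeping the canonical-bundle and Ricci error terms under precise control in the presence of $\phi$ is delicate, and this is precisely where the choice of auxiliary metric $H_{K_X}$ in Section~\ref{WeinzenbockFormula} enters non-trivially.
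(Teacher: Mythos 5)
Your proposal is correct and follows essentially the same route as the paper: the Weitzenb\"ock identity with the auxiliary HYM metric $H_{K_X}$ plus the matrix inequality of Lemma \ref{MatrixNorm} and the maximum principle give the uniform $C^0$ bound on the Higgs fields, equation (2) and Chern--Weil then give uniform $\Lambda_\omega F$ and $L^2$ curvature bounds, and the rest is the standard Uhlenbeck/Bando--Siu/Tian--Hong-Tian package for the reflexive limit, Hartogs extension of $\rho_\infty$, and the integral codimension-two defect current. The only cosmetic difference is that for the bounded-spectral-cover case you run a global maximum principle with $|[M,M^*]|^2\gtrsim |M|^4-C(R)$ (which is Lemma \ref{MatrixNorm}(2) in disguise), whereas the paper invokes the local estimate of Proposition \ref{Prop4.1}; both are the same idea.
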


\begin{rmk}
The compactness here is known to be sharp for Hermitian-Yang-Mills connections in general (\cite{Tian:00,GSTW:18, ChenSun:19}), i.e., complex codimension two bubbling with appearance of essential singularities is a genuine phenomenon. Note in the Riemann surface case,  no bubbling happens since it is complex codimension two. Thus by only assuming bounds on the spectral cover in general, the compactness above describes the general case. But in some special cases, one can say more (see section \ref{Monopoles} for rank two monopoles).
\end{rmk}

Below we assume that the spectral covers associated to $(A_i, \phi_i)$  are not uniformly bounded. By passing to a subsequence, this is equivalent to 
$$
\lim_i \|\phi_i\|_{L^2} = \infty
$$
(see Corollary \ref{L2ImpliesC0}). Denote $\rho_i=\frac{\phi_i}{\|\phi_i\|_{L^2}}$.

First, to explain the connection with Taubes' notion of $\ZBbb_2$ harmonic $2$-forms, we assume $E$ is a rank two bundle and consider solutions with trace free Higgs fields.

\begin{thm}\label{Rank=2}
Given a sequence of rank two solutions $(A_i,\phi_i)$ to the Vafa-Witten equation with $\tr \phi_i=0$ and $\lim_i \|\phi_i\|=\infty$, there exists a $\ZBbb_2$ holomorphic $n$-form $(L, \nu, Z)$ associated to the spectral cover defined by $b_\infty$ (see Lemma \ref{Lemma6.4})  so that  there exists a sequence of isometric embeddings
$$
\sigma_i: L|_{X\setminus Z} \rightarrow \End(\Ecal_i)|_{X\setminus Z}  
$$
satisfying
$$
\rho_i - \sigma_i \nu   \xrightarrow{C^0(Y)} 0,
$$
$$
|\nabla_{A_i} \sigma_i| \xrightarrow{L^2(Y)}0,
$$
and 
$$
\nabla_{A_i}(\rho_i - \sigma_i \nu)  \xrightarrow{L^2(Y)} 0
$$
over any compact subset $Y\subset X\setminus Z$.
\end{thm}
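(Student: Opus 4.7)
The strategy is to renormalize the Vafa-Witten system, use the trace-free rank two structure to produce an approximate spectral decomposition of $\rho_i$, and then read off the $\ZBbb_2$ data. Dividing equation $(2)$ by $\|\phi_i\|_{L^2}^2$ yields
\[
[\rho_i; \rho_i] = \frac{1}{\|\phi_i\|_{L^2}^2}\bigl(\mu\,\id - \sqrt{-1}\Lambda_\omega F_{A_i}\bigr),
\]
while the other two equations persist as $\dbar_{A_i}\rho_i=0$ and $F_{A_i}^{0,2}=0$. The Weitzenb\"ock analysis of Section \ref{WeinzenbockFormula}, applied with the auxiliary HYM metric $H_{K_X}$, supplies a uniform $L^2_1$ bound on $\rho_i$ on any precompact open set avoiding the curvature-concentration locus, and simultaneously forces $\|[\rho_i;\rho_i]\|_{L^1}\to 0$; elliptic bootstrap using $\dbar_{A_i}\rho_i=0$ then upgrades this to locally uniform vanishing of $[\rho_i;\rho_i]$ on any compact $Y\subset X\setminus Z$.

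Next I would invoke Lemma \ref{Lemma6.4} applied to $\det\rho_i\in H^0(K_X^{\otimes 2})$ to extract a subsequential limit $b_\infty$ whose local square roots produce the $\ZBbb_2$ holomorphic $n$-form $(L,\nu,Z)$ with $\nu^{\otimes 2}=-b_\infty$, where $Z$ is taken to contain both the curvature-concentration set from Theorem \ref{NilpotentCompactness} and the zero locus of $b_\infty$. On any compact $Y\subset X\setminus Z$ the two formal eigenvalues $\pm\nu$ of $\rho_i$ are then uniformly separated once $i$ is large.

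On such a $Y$ I would define $\sigma_i$ as the self-adjoint trace-free involution $P_i^{+}-P_i^{-}$, where $P_i^{\pm}$ are the orthogonal spectral projectors of $\rho_i$ associated to the two sheets $\pm\nu$, extracted via Riesz functional calculus around small contours encircling $+\nu$ and $-\nu$ and made self-adjoint using that $[\rho_i;\rho_i]\to 0$ (i.e.\ $\rho_i$ is almost normal). The sign ambiguity in labelling $\pm\nu$ matches precisely the $\ZBbb_2$ monodromy of $L$, so the local $\sigma_i$ glue into a global isometric embedding $L|_{X\setminus Z}\hookrightarrow\End(\Ecal_i)|_{X\setminus Z}$. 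Bauer--Fike / Davis--Kahan spectral perturbation then gives the pointwise bound $|\rho_i-\sigma_i\nu|\lesssim |[\rho_i;\rho_i]|/|\nu|$ on $Y$, yielding the claimed $C^0(Y)$ convergence.

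The main obstacle is the $L^2$ decay of $\nabla_{A_i}\sigma_i$ and of $\nabla_{A_i}(\rho_i-\sigma_i\nu)$. Differentiating the Riesz-projector formula and using both $\dbar_{A_i}\rho_i=0$ and the holomorphicity of $\nu$ on the spectral double cover yields a schematic bound
\[
|\nabla_{A_i}\sigma_i|\,|\nu|\lesssim |\nabla_{A_i}\rho_i-\sigma_i\,\nabla_{H_{K_X}}\nu|+|[\rho_i;\rho_i^{*}]|,
\]
which on $Y$ is controlled by quantities already known to decay. The delicate step is propagating the $L^1$ smallness of $[\rho_i;\rho_i]$ to an $L^2$ statement for $\nabla_{A_i}\sigma_i$ without losing a derivative; this is where the trace-free rank two hypothesis becomes essential, because $\End(\Ecal_i)$ on $Y$ decomposes along the spectral lines into a direct sum through which the Bochner inequality for $\rho_i$ reduces to a scalar estimate driven by the renormalized equation, and once $\nabla_{A_i}\sigma_i\to 0$ in $L^2(Y)$ is secured, the third convergence is an immediate consequence of the identity $\nabla_{A_i}(\rho_i-\sigma_i\nu)=\nabla_{A_i}\rho_i-(\nabla_{A_i}\sigma_i)\nu-\sigma_i\nabla_{H_{K_X}}\nu$ together with the $L^2_1$ bound on $\rho_i$.
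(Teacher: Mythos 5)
Your top-level skeleton matches the paper's: renormalize, extract $b_2^\infty=\lim b_2(\rho_i)$ (nonzero by the linear-algebra norm equivalence and $\|[\rho_i;\rho_i]\|_{L^2}\lesssim r_i^{-1}$), build the $\ZBbb_2$ holomorphic $n$-form from local square roots with the sign ambiguity realizing the monodromy of $L$, and define $\sigma_i$ as the difference of the two spectral projectors of $\rho_i$. However, the analytic core of the theorem is missing, and the steps you substitute for it do not work as stated.

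First, there is no ``curvature-concentration locus'' available in this setting, and the theorem's $Z$ is only the zero locus of $b_2^\infty$: you are not permitted to enlarge $Z$ by a bubbling set, and in any case Theorem \ref{NilpotentCompactness} does not apply because $\|F_{A_i}\|_{L^2}\sim \|[\phi_i;\phi_i]\|_{L^2}+\mu$ is unbounded when $\|\phi_i\|\to\infty$; likewise $\sqrt{-1}\Lambda_\omega F_{A_i}=\mu\,\id-r_i^2[\rho_i;\rho_i]$ is not uniformly bounded on $X\setminus Z$, so the ``elliptic bootstrap'' you invoke to upgrade integral smallness of $[\rho_i;\rho_i]$ to locally uniform smallness has no uniform elliptic input behind it (there is no PDE for $[\rho_i;\rho_i]$ and no gauge-fixed control of $A_i$). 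Second, even granting pointwise smallness of $[\rho_i;\rho_i]$, your Bauer--Fike/Davis--Kahan step only addresses the $C^0$ claim; the two $L^2$ claims for $\nabla_{A_i}\sigma_i$ and $\nabla_{A_i}(\rho_i-\sigma_i\nu)$ — which you yourself flag as the delicate point — are left as a gesture (``the Bochner inequality reduces to a scalar estimate''), and the schematic bound you write is circular, since $|\nabla_{A_i}\rho_i-\sigma_i\nabla_{H_{K_X}}\nu|$ is exactly the kind of quantity that needs to be shown small (note $\nabla_{A_i}\rho_i$ is only bounded in $L^2$, not small). What the paper actually uses here is the Mochizuki-type machinery of Section \ref{Estimates}: on a ball where the limiting spectral cover is unramified, the eigenvalue gap $\delta_i$ of the \emph{unnormalized} field $\phi_i$ tends to infinity (since $r_i\to\infty$ and the renormalized eigenvalues separate), the holomorphic projections $p^i_{\pm}$ are uniformly bounded via the Cauchy-integral formula, a maximum-principle argument applied to $\log|p_\lambda|^2$ (Lemma \ref{Lemma3.5}, Proposition \ref{Prop3.6}) gives $|p^i_\pm-\pi^i_\pm|\lesssim e^{-\epsilon\delta_i}$, and an integration-by-parts identity (Proposition \ref{Prop3.8}, Corollary \ref{Cor3.12}) gives $\int|\nabla_{A_i}p^i_\pm|^2+\int|\nabla_{A_i}\pi^i_\pm|^2\lesssim (1+S_i^2)e^{-\epsilon\delta_i}\to 0$, the exponential decay beating the polynomially growing bound $|\Lambda_\omega F_{A_i}|\lesssim 1+S_i^2$ from Corollary \ref{Cor3.5}. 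The convergences in the theorem then follow by writing $\rho_i=\nu_i(p^i_+-p^i_-)$ and $\sigma_i\nu=\nu_\infty(\pi^i_+-\pi^i_-)$ locally and combining these estimates with $\nu_i\to\nu_\infty$ smoothly (Proposition \ref{Prop3.13}). Without establishing quantitative estimates of this kind — in particular some mechanism exploiting $\delta_i\to\infty$ rather than merely $\|[\rho_i;\rho_i]\|\to 0$ — your argument does not close.
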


\begin{rmk}
\begin{itemize}
    \item When $\dim_{\CBbb} X=2$ and $(E,H)$ has structure group $\SU(2)$, this has been obtained by Taubes (\cite{Taubes17}). The results here give a simpler proof of Taubes' results together with a complex geometric interpretation of the limiting data (see Corollary \ref{TaubesZ2}).
    \item When $\dim_{\CBbb }X=1$, this is also known (\cite{MSWW:16}, \cite{Mochizuki:16}).
\end{itemize}
 
\end{rmk}
 
\emph{In principle} (see Remark \ref{rmk1.10}), we give an intrinsic description of the role of the spectral cover by using the natural torsion free sheaves associated to the spectral covers. More precisely, over the total space of the canonical bundle $K_X$, there exists a tautological line bundle $\Kcal\cong \pi^* K_X$ which has a global tautological section $\tau$ over $K_X$. Given any spectral cover $\pi: X_b \rightarrow X$ associated to 
$$b\in \oplus_{i=1}^r H^0(X, K_X^{\otimes i})$$ 
(see Definition \ref{Defi2.5}), we denote 
$$
\Kcal^b=\Kcal|_{X_b}.
$$
Then $\pi_* \Kcal^b$ is a locally free sheaf of rank equal to $r$ away from the discriminant locus $\Delta_b$ and it has a tautological section 
$$
\tau_b=\pi_*(\tau|_{X_b}).
$$
Furthermore, it has a natural Hermitian metric induced by $K_X$ away from $\Delta_{b}$. 

We denote $X_{b_\infty}$ as the limit of the spectral cover $X_{b(\rho_i)}$ where $\rho_i = \frac{\phi_i}{\|\phi_i\|_{L^2}}$. Then in general, we have the following intrinsic form of the convergence of the renormalized Higgs fields away from the discriminant locus $\Delta_{b_\infty}$ of the spectral cover $X_{b_\infty}$.

\begin{thm}\label{Main}
Assume $\lim_i \|\phi_i\|=\infty$. By passing to a subsequence, there exists a sequence of isometric embeddings
$$
\sigma_i: \pi_*\Kcal^{b_\infty}|_{X\setminus \Delta_{b_\infty}} \rightarrow \End(\Ecal_i) \otimes K_X|_{X\setminus \Delta_{b_\infty}} 
$$
so that 
$$
\sigma_i \tau_{b_\infty}-\rho_i \xrightarrow{C^0(Y)} 0,
$$
$$
\nabla_i \sigma_i \xrightarrow{L^2(Y)} 0,
$$
and 
$$
\nabla_i(\sigma_i \tau_{b_\infty}-\rho_i )\xrightarrow{L^2(Y)}0
$$
over any fixed compact subset $Y\subset X\setminus \Delta_{b_\infty}$.
\end{thm}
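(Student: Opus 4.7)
The plan is to upgrade the a priori $L^2$ boundedness of $\rho_i$ on compact subsets of $X \setminus \Delta_{b_\infty}$ to an asymptotic diagonalization and then identify the resulting eigen-decomposition with the tautological presentation of $\pi_*\Kcal^{b_\infty}$. First I would establish \emph{asymptotic normality}: starting from $\sqrt{-1}\Lambda_\omega F_{A_i}+[\phi_i;\phi_i]=\mu\id$ and using a Weitzenb\"ock identity applied to $|\phi_i|^2$ together with $\bar\partial_{A_i}\phi_i=0$ and Chern--Weil control on the $(1,1)$-curvature $F_{A_i}$ (note $F_{A_i}^{2,0}=0$ by unitarity), derive a global estimate that forces $\|[\rho_i;\rho_i]\|_{L^2(X)} \to 0$. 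On any compact $Y \subset X\setminus \Delta_{b_\infty}$, this, combined with local elliptic regularity for the holomorphic section $\rho_i$ of $\End(\Ecal_i)\otimes K_X$, yields $C^0_{loc}$ control on $\rho_i$ and asymptotic commuting of $\rho_i$ and $\rho_i^*$.

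Next, on a small open set $U \subset X \setminus \Delta_{b_\infty}$ the characteristic polynomial defined by $b_\infty$ factors into $r$ separated holomorphic sheet values $\lambda_1,\dots,\lambda_r\in H^0(U,K_X)$, and the convergence $b(\rho_i)\to b_\infty$ (Lemma~\ref{Lemma6.4}) together with Step~1 implies that the eigenvalues $\lambda_k(\rho_i)$ of $\rho_i$ are uniformly separated and close to $\lambda_k$ for large $i$. I would then define the smooth spectral projectors
\[
P_i^{(k)} = \frac{1}{2\pi\sqrt{-1}}\oint_{\gamma_k}(z\,\id-\rho_i)^{-1}\,dz
\]
using contours $\gamma_k$ in $K_X|_U$ encircling $\lambda_k$. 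Near-normality forces the $P_i^{(k)}$ to be asymptotically self-adjoint with mutually asymptotically orthogonal ranges. On the sheaf side, $\pi_*\Kcal^{b_\infty}|_U$ decomposes as $\bigoplus_k L_k$, each $L_k$ being the push-forward of $\Kcal$ from the $k$-th sheet, equipped with the induced Hermitian metric from $K_X$ and satisfying $\tau_{b_\infty}|_{L_k}=\lambda_k$. I define $\sigma_i$ summand by summand, sending a unit local generator of $L_k$ to a unit-norm element of the range of $P_i^{(k)}$, with a small Gram--Schmidt correction (negligible in $C^0$) to enforce exact isometric embedding. The $C^0$ matching $\sigma_i\tau_{b_\infty}-\rho_i\to 0$ on $Y$ then follows from the spectral representation $\rho_i \approx \sum_k \lambda_k(\rho_i)P_i^{(k)}$ together with $\lambda_k(\rho_i)\to \lambda_k$ uniformly.

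The delicate step, and the main obstacle, is the $L^2$ derivative estimate. Differentiating the contour formula gives
\[
\nabla_{A_i}P_i^{(k)}=\frac{1}{2\pi\sqrt{-1}}\oint_{\gamma_k}(z\,\id-\rho_i)^{-1}(\nabla_{A_i}\rho_i)(z\,\id-\rho_i)^{-1}\,dz,
\]
so $\nabla_{A_i}\sigma_i$ is controlled by the off-diagonal part of $\nabla_{A_i}\rho_i$ with respect to the eigen-decomposition. Uniform $L^2_{loc}$ bounds on $\nabla_{A_i}\rho_i$ are obtained from $\bar\partial_{A_i}\rho_i=0$, the K\"ahler identities and the renormalized Hermite--Einstein equation. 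The crucial and most subtle input is a spectral-gap argument that converts the $L^2$ smallness of $[\rho_i;\rho_i]$ into $L^2$ smallness of the off-diagonal blocks of $\nabla_{A_i}\rho_i$; this is the higher-rank analogue of the rank-two analysis behind Theorem~\ref{Rank=2} and is where the bulk of the technical work lies. Once it is in place, $\nabla_{A_i}\sigma_i\to 0$ in $L^2(Y)$ follows, and combining this with the $C^1_{loc}$ convergence $\lambda_k(\rho_i)\to \lambda_k$ yields $\nabla_{A_i}(\sigma_i\tau_{b_\infty}-\rho_i)\to 0$ in $L^2(Y)$.
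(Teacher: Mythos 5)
Your construction of $\sigma_i$ (contour-integral spectral projectors $P_i^{(k)}$ over the sheets of $X_{b_\infty}$, orthogonalized and paired with the sheet values) is essentially the paper's construction, where $\sigma_i(\lambda^k_\infty)=\pi^k_i\otimes\lambda^k_i$ with $\pi^k_i$ the orthogonal projection onto the eigenline $\Lcal_{\lambda^k_i}$ (note, incidentally, that $\sigma_i$ must land in $\End(\Ecal_i)\otimes K_X$, so the generator should be sent to a projection tensored with the eigen-form, not to a unit vector in the range of $P_i^{(k)}$). The genuine gap is that the quantitative heart of the theorem is left as a black box: you write that the conversion of near-normality into smallness of the relevant derivatives "is where the bulk of the technical work lies" and then assume it. In the paper this is exactly the content of Section \ref{Estimates} (Proposition \ref{Prop3.6}, Proposition \ref{Prop3.8}, Corollary \ref{Cor3.12}, assembled in Proposition \ref{Prop3.13}), and without it neither the $C^0$ statement nor the two $L^2$ statements follow.

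Moreover, the mechanism you sketch for this missing step is not the one that works. First, $\|[\rho_i;\rho_i]\|_{L^2}\to 0$ together with the Weitzenb\"ock formula only gives a uniform bound $\|\nabla_{A_i}\rho_i\|_{L^2}=O(1)$, not smallness; since on $Y$ the eigenvalue gaps of $\rho_i$ are merely bounded below, differentiating the contour formula then only yields $\|\nabla_{A_i}P_i^{(k)}\|_{L^2}=O(1)$, not $o(1)$. Second, your claim that $L^2$ near-normality plus "local elliptic regularity" gives $C^0$ control and asymptotic self-adjointness of the projectors is problematic because $\sqrt{-1}\Lambda_\omega F_{A_i}=\mu\id-r_i^2[\rho_i;\rho_i]$ is not uniformly bounded a priori, so the elliptic constants degenerate. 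The paper's route is different: it works with the \emph{unnormalized} field $\phi_i$, whose spectral gap $\delta_i$ on $Y$ diverges like $\|\phi_i\|_{L^2}$ (the spectral projections are the same for $\phi_i$ and $\rho_i$), and runs Mochizuki--Simpson type maximum-principle arguments: the eigenvalue bound gives $|\Lambda_\omega F_{A_i}|\lesssim S_i^2+1+\mu$ (Corollary \ref{Cor3.5}); the differential inequality for $\log(|p_\lambda|^2/r_\lambda)$ together with the linear-algebra bound $|[M^*,p_\lambda]|\gtrsim\delta\,|p_\lambda-\pi_\lambda|$ and a comparison function yields the exponential decay $|p_\lambda-\pi_\lambda|\lesssim e^{-\epsilon\delta_i}$ (Proposition \ref{Prop3.6}); and the identity $\Delta_{\dbar}|p_\lambda-p_\lambda^*|^2=([\sqrt{-1}\Lambda_\omega F_{A_i},p_\lambda],p_\lambda-p_\lambda^*)-|\partial_{A_i}p_\lambda|^2$, integrated against a cutoff, converts this into the $L^2$ decay of $\nabla_{A_i}p^k_i$ and $\nabla_{A_i}\pi^k_i$ (Proposition \ref{Prop3.8}, Corollary \ref{Cor3.12}). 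These exponential estimates, not the $L^2$ smallness of $[\rho_i;\rho_i]$, are what make all three limits in the theorem go through; supplying them (or an equivalent substitute) is precisely what your proposal still owes.
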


\begin{rmk}\label{rmk1.10}
We emphasize the description here is in principle as we mentioned above. This is due to that the discriminant locus $\Delta_b$ can be the whole set $X$ in general. Note the nilpotent solutions including monopoles and Hermitian-Yang-Mills connection gives $\Delta_b=X$, but the Uhlenbeck compactness can still be applied. 
\end{rmk}

In the last section, we study the space of rank two monopoles with trace free Higgs fields and some examples. It turns out that in general, given a monopole, the connection has to be reducible (see Proposition \ref{Reducible}). In particular, an $\SU(2)$ monopole is given by a connection on a line bundle $L$ together with a section of $K_X \otimes L^{-2}$, thus the moduli space of $\SU(2)$ monopoles is compact. We also give an example that even on the trivial bundle, the moduli space of stable Higgs pairs is rich over K\"ahler manifolds with two linearly independent holomorphic forms of top degree. 

\subsection*{Notation} Give two quantities $Q_1$ and $Q_2$ which are usually norms of matrix valued functions, we use 
\begin{itemize}
\item $Q_1\lesssim (\gtrsim) Q_2$ to denote $Q_1 \leq (\geq) C Q_2$ for some constant $C>0$ independent of $Q_1$ and $Q_2$ 
\item $Q_1 \sim Q_2$ to denote $C^{-1} Q_2 \leq Q_1 \leq C Q_2$ for some constant $C>0$ independent of $Q_1$ and $Q_2$
\end{itemize}

\subsection*{Acknowledgment} The author would like to thank Siqi He for helpful discussions on related topics and Ruxandra Moraru for pointing out several references. He would also like to thank the anonymous referee for carefully reading the paper, pointing out related references, and many valuable questions and suggestions which greatly improved the presentation of the paper. This work is partially supported by NSERC and the ECR supplement.

\section{Preliminaries}
\subsection{Linear algebra at one point}
Let $V$ be a complex vector space of dimension $m$. Consider the adjoint action  $\GL(V) \curvearrowright \End(V)$ and the GIT quotient $\End(V)//\GL(V)$. As a space, this parametrizes the space of $\GL(V)$-orbits in $\End(V)$ with two orbits  identified if their closures intersect nontrivially. Denote the natural projection by $p: \End(V)/\GL(V) \rightarrow \End(V)//\GL(V).$ The following is well-known and instructive for what type of generic matrices one might consider in general 

\begin{prop}
For any $y\in \End(V)//\GL(V)$, $p^{-1}(y)$ contains a unique semisimple orbit , i.e., the orbit contains a diagonalizable representative, and a unique regular orbit , i.e., the orbit contains a representative so that each eigenvalue has exactly one Jordan block. Furthermore,
$$
\End(V)//\GL(V)\cong \CBbb^m
$$
and the identification is given by the invariant functions as those $b_i(\phi)$ satisfying 
$$
\det(\lambda\id- \phi)=\lambda^m+b_1(\phi)\lambda^{m-1}+\cdots b_m(\phi).
$$
\end{prop}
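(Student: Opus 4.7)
The plan is to organize the proof around the ring of polynomial invariants $R = \CBbb[\End(V)]^{\GL(V)}$, whose spectrum is by definition the affine GIT quotient.

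First I would establish that $R = \CBbb[b_1, \ldots, b_m]$ with the $b_i$ algebraically independent, giving $\End(V)//\GL(V) \cong \Spec R \cong \CBbb^m$. The standard tool is Chevalley restriction: any invariant is determined by its restriction to the diagonal subalgebra $\mathfrak{t}\cong \CBbb^m$, and since the Weyl group of $\GL(V)$ on $\mathfrak{t}$ is $S_m$, such restrictions lie in the ring of symmetric polynomials $\CBbb[\lambda_1, \ldots, \lambda_m]^{S_m} = \CBbb[e_1, \ldots, e_m]$. Since $b_i$ restricts to $(-1)^i e_i$, these invariants generate; algebraic independence follows because the $e_i$ are already algebraically independent on $\mathfrak{t}$.

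For the semisimple orbit, given $\phi$ I would use the Jordan--Chevalley decomposition $\phi = \phi_s + \phi_n$. After conjugating $\phi$ into upper-triangular Jordan form, conjugation by the one-parameter subgroup $g_s = \diag(1, s, s^2, \ldots, s^{m-1})$ rescales each strictly upper-triangular entry $\phi_{ij}$ by $s^{i-j}$, so
$$
\lim_{s \to \infty} g_s \phi g_s^{-1} = \phi_s,
$$
and $\phi_s$ lies in the orbit closure of $\phi$ and represents the same point of the GIT quotient. The semisimple conjugacy class with characteristic polynomial $p(\lambda)$ of squarefree part $q(\lambda)$ equals the closed subset $\{\phi : q(\phi)=0,\ \det(\lambda\id-\phi)=p(\lambda)\}$, hence is closed in $\End(V)$, and two diagonalizable matrices with the same multiset of eigenvalues are conjugate. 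This yields both existence and uniqueness of the semisimple orbit in each fiber, and as a byproduct establishes the injectivity needed in the Chevalley restriction step above.

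Finally, for the regular orbit, existence is immediate by taking the companion matrix of $\lambda^m + b_1 \lambda^{m-1} + \cdots + b_m$, which is regular with the prescribed invariants. Uniqueness reduces to the classical fact that a regular matrix (one Jordan block per eigenvalue) is conjugacy-determined by its characteristic polynomial: the block sizes are forced to equal the algebraic multiplicities of the corresponding roots, so the rational canonical form is fixed by $y$ alone. I expect the technical heart of the argument to be the Chevalley restriction theorem together with the orbit-closure lemma in the first two steps; the regular orbit assertion then follows directly from the rational canonical form.
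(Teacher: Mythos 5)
Your proposal is correct, but note that the paper offers no proof of this proposition at all: it is stated as well-known in the preliminaries, so there is no argument of the paper's to compare against. What you wrote is the standard complete argument — Chevalley restriction to the diagonal torus identifying $\CBbb[\End(V)]^{\GL(V)}$ with $\CBbb[e_1,\dots,e_m]$ and hence the quotient with $\CBbb^m$ via the coefficients $b_i$; the one-parameter subgroup $\diag(1,s,\dots,s^{m-1})$ degenerating a Jordan form onto its semisimple part, together with the closedness of the semisimple class cut out by $q(\phi)=0$ and the characteristic polynomial, giving existence and uniqueness of the closed (semisimple) orbit in each fiber; and the companion matrix plus rational canonical form handling the regular orbit. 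The only point worth making explicit, since the paper defines $\End(V)//\GL(V)$ set-theoretically as orbits modulo intersection of closures rather than as $\Spec$ of the invariant ring, is that your degeneration step already supplies the bridge: every orbit with characteristic polynomial $p$ contains the unique semisimple orbit for $p$ in its closure, and the $b_i$ are constant on orbit closures, so the equivalence classes of the paper's relation are exactly the fibers of $(b_1,\dots,b_m)$, matching the scheme-theoretic quotient you computed.
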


We also need a well known simple linear algebra lemma which we include a proof for completeness
\begin{lem}\label{MatrixNorm}
The following holds for any $m\times m$ matrices
\begin{enumerate}
    \item $|M|^2 \sim |[M,M^*]|+|b_1(M)|^2 + \cdots |b_m(M)|^{\frac{2}{m}}$;
    \item $0\leq |M|^2-\sum_i |\lambda_i|^2\lesssim |[M,M^*]|$ where $\{\lambda_i\}_i$ denote the set of eigenvalues counted with multiplicities.
\end{enumerate}
\end{lem}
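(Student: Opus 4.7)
The plan is to reduce the question to Schur normal form and then treat the ``departure from normality'' separately from the part controlled by the characteristic polynomial. Since $|M|^2$, $|[M,M^*]|$, $\sum_i |\lambda_i|^2$, and every $b_i(M)$ are invariant under $M \mapsto U M U^*$ with $U$ unitary, I would first put $M$ in Schur form $M = D + N$ with $D = \diag(\lambda_1,\ldots,\lambda_m)$ and $N$ strictly upper triangular. The identity $|M|^2 = |D|^2 + |N|^2 = \sum_i|\lambda_i|^2 + |N|^2$ then immediately gives the nonnegativity in (2) and reduces the upper bound in (2) to the key inequality $|N|^2 \lesssim |[M,M^*]|$.

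For that key inequality I would expand $[M,M^*] = [D,N^*] + [N,D^*] + [N,N^*]$ (using $[D,D^*]=0$) and note that $[D,N^*]$ is strictly lower triangular with entries $(\lambda_i - \lambda_j)\overline{N_{ji}}$, while $[N,D^*]$ is strictly upper triangular, so the three summands have controlled triangular structure. From here the cleanest route is either to invoke Henrici's classical departure-from-normality bound $|N|^2 \leq \sqrt{(m^3-m)/12}\,|[M,M^*]|$, or to run a compactness argument on the unit sphere of $\End(V)$: both sides of the target inequality are continuous, homogeneous of degree two, and vanish exactly on the (closed, unitarily invariant) locus of normal matrices, so the ratio is bounded once boundedness near that locus is verified by a perturbation of Schur's decomposition, handling the cases of distinct and coinciding limit eigenvalues separately.

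For part (1), the direction $\gtrsim$ is immediate since $|[M,M^*]|\leq 2|M|^2$ and $|b_i(M)|^{2/i}\lesssim |M|^2$ by homogeneity. For $\lesssim$, I would combine $|M|^2 = |N|^2 + \sum|\lambda_i|^2$ with the bound on $|N|^2$ from (2), and then apply a Cauchy-type root estimate to the characteristic polynomial $\lambda^m + b_1(M)\lambda^{m-1} + \cdots + b_m(M)$: if $|\lambda| > 2\max_i|b_i(M)|^{1/i}$, then $|\lambda|^m$ strictly dominates $\sum_i|b_i(M)||\lambda|^{m-i}$, so no such $\lambda$ is a root, giving $\max_i|\lambda_i| \lesssim \max_i|b_i(M)|^{1/i}$ and hence $\sum|\lambda_i|^2 \lesssim \sum_i|b_i(M)|^{2/i}$. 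The genuinely delicate step is the departure-from-normality bound $|N|^2 \lesssim |[M,M^*]|$ in (2): because the ratio is of indeterminate form along the entire normal locus, neither scaling nor bare continuity is enough, and one really needs either the explicit Henrici constant or a quantitative perturbative analysis of Schur's decomposition to control the interaction between almost-coinciding eigenvalues and the upper triangular entries $N_{ij}$.
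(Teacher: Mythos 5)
Your proof is correct, but it follows a genuinely different route from the paper's. The paper argues both parts by soft compactness: for (1) it normalizes $|M_k|=1$ and uses that a matrix which is normal and has all invariants $b_i$ vanishing must be zero; for (2) it passes to Schur form $M_k=D_k+U_k$, rescales so that $|U_k|=1$, and tries to force $[U,U^*]=0$ in the limit by claiming that the cross term of $[M_k,M_k^*]$ is skew-Hermitian, hence orthogonal to $[U_k,U_k^*]$, so that $|[U_k,U_k^*]|\le|[M_k,M_k^*]|\to 0$. Your argument is quantitative instead: Schur's identity $|M|^2=\sum_i|\lambda_i|^2+|N|^2$ gives the nonnegativity in (2), Henrici's inequality $|N|^2\le\sqrt{(m^3-m)/12}\,|[M,M^*]|$ (which you quote correctly) supplies the key estimate with an explicit constant, and the Cauchy root bound converts $\sum_i|\lambda_i|^2$ into $\sum_i|b_i(M)|^{2/i}$, so that (1) follows from (2) with effective constants, whereas the paper's constants are non-effective and its proof of (1) is independent of (2). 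Your version also has the algebra on its side: as you note, the cross term in $[M,M^*]=[N,N^*]+[D,N^*]+[N,D^*]$ is $X+X^*$ with $X=[D,N^*]$, i.e.\ Hermitian; the paper records it as $[D,U^*]+[D^*,U]$ and treats it as skew-Hermitian, and without that orthogonality the inequality $|[U,U^*]|\le|[M,M^*]|$ can in fact fail (for $m=3$, $D=\diag(0,-1,0)$ and $N$ the strictly upper triangular matrix with all entries $1$, one gets $|[N,N^*]|^2=12>8=|[M,M^*]|^2$). So your insistence that the departure-from-normality step needs a genuine input --- Henrici's theorem, or an elementary substitute such as the partial-trace identity $\sum_{i\le p}[M,M^*]_{ii}=\sum_{i\le p<j}|N_{ij}|^2$, which after summing over $p$ yields $|N|^2\le (m-1)\sqrt{m}\,|[M,M^*]|$ --- is well founded. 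The only incomplete piece of your write-up is the alternative compactness route, which you yourself flag as requiring a real perturbative analysis; since the Henrici route is complete, this does not affect correctness.
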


\begin{proof}
For (1), we first show 
$$
|M|^2 \lesssim |[M,M^*]|+|b_1(M)|^2 + \cdots |b_m(M)|^{\frac{2}{m}}.
$$
Otherwise, there exists a sequence of matrices $M_k$ with $|M_k|=1$, but 
$$
|[M_k,M_k^*]|+|b_1(M_k)|^2 + \cdots |b_n(M_k)|^{\frac{2}{n}} \leq \frac{1}{k}.
$$
Passing to a subsequence, we can assume $M_k \rightarrow M$ with $|M|=1$. Also, we obtain
$$|[M,M^*]|+|b_1(M)|^2 + \cdots |b_m(M)|^{\frac{2}{m}}=0.$$
which implies $M=0$. This is a contradiction. The other direction is trivial. 

For (2), we only need to show the second inequality. Otherwise, there exists a sequence of matrices $M_k$ so that 
$$
k|[M_k, M_k^*]|\leq |M_k|^2-\sum_i |\lambda_i^k|^2.
$$
By doing unitary transforms, we can assume $M_k=D_k+U_k$ where $D_k$ is diagonal and $U_k$ is strictly upper triangular and nonzero. We normalize $|U_k|=1$. Then 
$$
|[U_k,U_k^*]+[D_k, U_k^*]+[D_k^*,U_k]|=|[M_k, M_k^*]| \leq \frac{1}{k}.
$$
Since $[D_k, U_k^*]+[D_k^*,U_k]=[D_k, U_k^*]-[D_k,U_k^*]^*$ is skew symmetric and $[U_k,U_k^*]$ is symmetric, they are orthogonal. Thus the above implies 
$$
|[U_k,U_k^*]|^2 \leq |[U_k,U_k^*]+[D_k, U_k^*]+[D_k^*,U_k]|^2 \leq \frac{1}{k^2}.
$$
Passing to a subsequence, we can assume $U_k \rightarrow U$ which is strictly upper triangular, $|U|=1$ and $[U,U^*]=0.$ This is a contradiction. 
\end{proof}

\subsection{Stability} 
We will use the following standard notion of stability for a Higgs pair $(\Ecal, \phi)$ where $\Ecal$ is a holomorphic vector bundle over a K\"ahler manifold $(X,\omega)$ together with a holomorphic section $\phi\in H^0(X, \End(\Ecal) \otimes K_X)$ (\cite{LubkeTeleman:95}).
\begin{defi}
$(\Ecal, \phi)$ is called stable (semistable) if for any subsheaf $\Fcal \subset \Ecal$ with $\phi(\Fcal) \subset \Fcal\otimes K_X$ and $0<\rank \Fcal < \rank \Ecal$,  the following holds 
$$
\mu(\Fcal)\equiv \frac{\int_X c_1(\Fcal) \wedge \omega^{n-1}}{\rank \Fcal}< \mu(\Ecal)\equiv \frac{\int_X c_1(\Ecal) \wedge \omega^{n-1}}{\rank \Ecal}
$$
\end{defi}

\begin{defi}\label{Defi2.5}
Given a Higgs pair $(\Ecal, \phi)$, the spectral cover $X_{b(\phi)}$ associated to $(\Ecal, \phi)$ is defined as
$$
X_{b(\phi)}=\Psi^{-1}(0) \subset K_X
$$
where 
$$
\Psi: K_X \rightarrow K_X^{\otimes r}, \lambda \mapsto \det(\lambda \id_{\Ecal}-\phi)=\lambda^r+b_{r-1}(\phi) \lambda^{r-1} + \cdots b_r(\phi).
$$
and 
$$
b(\phi)=(b_1(\phi), \cdots b_r(\phi)). 
$$
Here $r=\rank \Ecal$. The natural projection $\pi: X_{b(\phi)} \rightarrow X$ is a covering branched along the discriminant locus 
$$
\Delta_{b(\phi)}=(P(b_1(\phi),\cdots b_r(\phi))=0) \subset X
$$
where $P$ denotes the discriminant polynomial for 
$$
t^r+b_{r-1} t^{r-1} + \cdots b_r=0
$$
and $P(b_1(\phi),\cdots b_n(\phi))\in H^0(X, K_X^{\otimes r})$.
\end{defi}

\begin{rmk}
\begin{itemize}
    \item For purpose later, we choose $b(\phi)$ to index the spectral cover $X_{b(\phi)}$ because it only needs data from $b(\phi)$ rather than all the information about $\phi$. Indeed, given any $b\in \oplus_{k=1}^r H^0(X, K_X^{\oplus k})$, we can define 
$$
X_b=\{\lambda \in K_X: \lambda^r + b_1 \lambda^{r-1} + \cdots b_r=0\}
$$
as above which is a covering of $X$ branched along the discriminant locus $\Delta_{b}$ similar as above. We will still call $X_b$ the spectral cover associated to $b$. Assuming $b_1=\cdots b_{r-1}=0$, $X_b$ is the cyclic cover by taking $r$-th root of $-b_r$ in $K_X$.
    \item We also mention that over a projective manifold $X$, there exists a natural one-to-one correspondence between the space of Higgs sheaves $(\Ecal, \phi)$ where $\phi\in H^0(X, \End(\Ecal) \otimes K_X)$ and the space of compactly supported coherent sheaves $\hat{\Ecal}$ over $K_X$. In particular, $supp(\hat{\Ecal})=X_{b(\phi)}$ (\cite[Proposition $2.2$]{ThomasTanaka17}).
\end{itemize}

\end{rmk}
In the following, we fix a smooth bundle $E$ over $X$. Denote
$$
\Mcal_{Higgs}:=\{(\Ecal,\phi): \Ecal \text{ has $E$ as the underlying smooth bundle }\}/\sim
$$
where $(\Ecal,\phi) \sim (\Ecal',\phi')$ if and only if there exists an isomorphism $f$ so that the following diagram commutes
\[\begin{tikzcd}
	\Ecal & {\Ecal\otimes K_X} \\
	\Ecal' & {\Ecal' \otimes K_X.} \\
	\arrow["\phi", from=1-1, to=1-2]
	\arrow["f"', from=1-1, to=2-1]
	\arrow["\phi'"', from=2-1, to=2-2]
	\arrow["f", from=1-2, to=2-2]
\end{tikzcd}\]
We also denote the subspace of stable Higgs pairs as $\Mcal_{Higgs}^s \subset \Mcal_{Higgs}$. 

\begin{defi}
The Hitchin map is defined as
$$
h: \Mcal_{Higgs}\rightarrow \oplus_{k=0}^{\rank \Ecal} H^0(S, K_X^{\otimes k}), (\Ecal, \phi) \rightarrow (b_1(\phi), \cdots, b_n(\phi))
$$
and the fiber $h^{-1}(0)$ is called the nilpotent cone.  
\end{defi}

\subsection{Vafa-Witten equation} Given a smooth unitary bundle $(E,H)$ over a compact K\"ahler manifold $(X,\omega)$, a solution to the Vafa-Witten equation is a pair $(A, \phi)$ satisfying 
\begin{itemize}
\item[(1)] $F_A^{0,2}=0$;
\item[(2)]  $\sqrt{-1} \Lambda_{\omega} F_{A}+[\phi;\phi]=\mu\id$;
\item[(3)] $\dbar_A \phi=0$,
\end{itemize}
where $A$ is a unitary connection on $(E,H)$, $\phi$ is a section of $\End(E) \otimes K_X$ and 
$$
\mu=\frac{2\pi \int_X c_1(E) \wedge \frac{\omega^{n-1}}{(n-1)!}}{\int_X \frac{\omega^n}{n!}}
$$ 
which is usually called the Hermitian-Einstein constant. Here locally if we write $\phi=M \sigma$ for some holomorphic $n$-form $\sigma$, then 
$$
[\phi;\phi]=[M,M^*] |\sigma|^2.
$$
\begin{defi}
A solution $(A,\phi)$ to the Vafa-Witten equation is called irreducible if it can not be written as a direct sum of nontrivial solutions to the Vafa-Witten equation.
\end{defi}

We denote by $\Mcal^*$ as the space of irreducible solutions to the Vafa-Witten equation mod gauge equivalence. Here two solutions $(A,\phi)$ and $(A', \phi')$ are called equivalent if there exists a smooth unitary isomorphism $f:E \rightarrow E$ so that 
$$
f \circ \nabla_A \circ f^{-1}=\nabla_{A'}, f \phi f^{-1} = \phi'.
$$
Then there exists a real analytic isomorphism $\Phi: \Mcal^{s}_{Higgs}\rightarrow \Mcal^*$ (\cite{LubkeTeleman:06})). Later in this paper, we will study the limiting behavior of a sequence of solutions to the Vafa-Witten equation. For this, we need
\begin{lem}
Given a solution $(\Ecal, \phi)$ to the Vafa-Witten equation, the following holds 
$$
C^{-1}(\|[\phi;\phi]\|_{L^2}+\mu)\leq \|F_A\|_{L^2} \leq C(\|[\phi;\phi]\|_{L^2}+\mu)
$$
for some dimensional constant $C$.
\end{lem}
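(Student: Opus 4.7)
The plan is to write $\|F_A\|_{L^2}^2$ in closed form up to fixed topological data, and then convert it into $\|[\phi;\phi]\|_{L^2}^2$ and $\mu^2$ via the moment map equation. Two ingredients drive the argument: the Vafa--Witten conditions force $F_A$ to be of pure type $(1,1)$ and yield an explicit pointwise algebraic identity for $\Lambda_\omega F_A$, while a Chern--Weil / Kobayashi--L\"ubke identity relates $\|F_A\|_{L^2}^2$ to $\|\Lambda_\omega F_A\|_{L^2}^2$ up to a topological constant.

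First I would dispose of $\Lambda_\omega F_A$ pointwise. The moment map equation gives $\sqrt{-1}\,\Lambda_\omega F_A=\mu\id-[\phi;\phi]$. Locally $[\phi;\phi]=[M,M^*]|\sigma|^2$ is Hermitian (since $[M,M^*]^*=[M,M^*]$) and traceless ($\tr[M,M^*]=0$), while $\mu\id$ is Hermitian; using that $\Lambda_\omega F_A$ is skew-Hermitian one computes
$$
|\Lambda_\omega F_A|^2=-\tr\!\big((\Lambda_\omega F_A)^2\big)=\tr\!\big(([\phi;\phi]-\mu\id)^2\big)=|[\phi;\phi]|^2+r\mu^2,
$$
so integrating yields
$$
\|\Lambda_\omega F_A\|_{L^2}^2=\|[\phi;\phi]\|_{L^2}^2+r\mu^2\,\vol(X),\qquad r=\rank E.
$$

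Next I would relate $\|F_A\|_{L^2}^2$ to $\|\Lambda_\omega F_A\|_{L^2}^2$. Since $F_A^{0,2}=0$ and $A$ is unitary, $F_A=F_A^{1,1}$, which decomposes orthogonally as $F_A=F_A^{\perp}+\tfrac{\Lambda_\omega F_A}{n}\omega$ with $F_A^{\perp}$ $\omega$-primitive, giving $|F_A|^2=|F_A^{\perp}|^2+\tfrac{1}{n}|\Lambda_\omega F_A|^2$. Using $\ast\alpha=-\alpha\wedge\omega^{n-2}/(n-2)!$ for primitive $(1,1)$-forms on the primitive part, a direct calculation on the trace piece, and the vanishing of cross terms (from $\alpha\wedge\omega^{n-1}=0$ for primitive $\alpha$), one obtains the pointwise identity
$$
\tr(F_A\wedge F_A)\wedge\frac{\omega^{n-2}}{(n-2)!}=\Big(|F_A^{\perp}|^2-\frac{n-1}{n}|\Lambda_\omega F_A|^2\Big)\,\dvol,
$$
equivalently $|F_A|^2\,\dvol=|\Lambda_\omega F_A|^2\,\dvol+\tr(F_A\wedge F_A)\wedge\omega^{n-2}/(n-2)!$. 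The integral of the last term is a topological constant $C_{\mathrm{top}}$ depending only on $c_1(E)$, $c_2(E)$, and $[\omega]$, so combining with the previous step,
$$
\|F_A\|_{L^2}^2=\|\Lambda_\omega F_A\|_{L^2}^2+C_{\mathrm{top}}=\|[\phi;\phi]\|_{L^2}^2+r\mu^2\vol(X)+C_{\mathrm{top}}.
$$

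Both desired inequalities now follow from the elementary equivalence $a^2+b^2\sim(a+b)^2$ applied with $a=\|[\phi;\phi]\|_{L^2}$ and $b=\mu\sqrt{r\vol(X)}$, once $r$, $\vol(X)$, and $C_{\mathrm{top}}$ are absorbed into $C$; strictly speaking the ``dimensional'' constant of the statement must in fact depend also on the fixed topological data $(E,X,[\omega])$. The main bookkeeping obstacle is verifying the precise coefficient $\tfrac{n-1}{n}$ in the Chern--Weil identity, which is exactly what causes the primitive part $\|F_A^{\perp}\|_{L^2}^2$ and the trace contribution $\tfrac{1}{n}\|\Lambda_\omega F_A\|_{L^2}^2$ to recombine cleanly into $\|\Lambda_\omega F_A\|_{L^2}^2+C_{\mathrm{top}}$.
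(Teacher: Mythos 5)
Your proof follows essentially the same route as the paper's: decompose $F_A$ into its trace and primitive parts, use the Hodge--Riemann relations so that the primitive contribution is traded for the topological term $\int_X \tr(F_A\wedge F_A)\wedge \frac{\omega^{n-2}}{(n-2)!}$, and use the moment map equation to convert $|\Lambda_\omega F_A|^2$ into $|[\phi;\phi]|^2+\mu^2$ before integrating. Your write-up is simply more explicit about the coefficients and correctly notes that the constant depends on the fixed data $(X,[\omega],E)$ rather than on the dimension alone, which is implicit in the paper's argument as well.
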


\begin{proof}
Since by definition
$$
F_A = \Lambda_{\omega} F_A.\omega + F_A^{0},
$$
the Hodge-Riemann property implies 
$$
|F_A^0|^2 \sim -\frac{\tr(F_A^0 \wedge F_A^0) \wedge \frac{\omega^{n-2}}{(n-2)!}}{\frac{\omega^n}{n!}}
$$
In particular, we obtain 
$$
\begin{aligned}
|F_A|^2 &\sim |\Lambda_{\omega} F_A|^2 -\frac{\tr(F_A^0 \wedge F_A^0) \wedge \frac{\omega^{n-2}}{(n-2)!}}{\frac{\omega^n}{n!}} \\
&\sim  |\Lambda_{\omega} F_A|^2 - \frac{\tr(F_A \wedge F_A) \wedge \frac{\omega^{n-2}}{(n-2)!}}{\frac{\omega^n}{n!}} \\
& \sim |[\phi;\phi]|^2-\frac{\tr(F_A \wedge F_A) \wedge \frac{\omega^{n-2}}{(n-2)!}}{\frac{\omega^n}{n!}}+\mu^2. 
\end{aligned}
$$
The conclusion follows.
\end{proof}

\begin{cor}
Given a sequence of solutions $(A_i, \phi_i)$ to the Vafa-Witten equation on a fixed unitary bundle, $\|F_{A_i}\|_{L^2}$ is uniformly bounded if and only if $\|[\phi_i;\phi_i]\|_{L^2}$ is uniformly bounded. 
\end{cor}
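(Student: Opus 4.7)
The plan is to invoke the preceding lemma essentially verbatim. The key observation is that the Hermitian--Einstein constant
$$
\mu = \frac{2\pi \int_X c_1(E) \wedge \frac{\omega^{n-1}}{(n-1)!}}{\int_X \frac{\omega^n}{n!}}
$$
depends only on the topology of the fixed smooth bundle $E$ and on the fixed K\"ahler form $\omega$, so it is one number independent of $i$. Similarly, the constant $C$ appearing in the lemma is a purely dimensional constant and hence does not vary along the sequence. These two independence statements are what convert the pointwise-in-$i$ comparison supplied by the lemma into a uniform comparison across the sequence.

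The first step is to apply the lemma to each $(A_i,\phi_i)$ to record
$$
C^{-1}\bigl(\|[\phi_i;\phi_i]\|_{L^2}+\mu\bigr) \leq \|F_{A_i}\|_{L^2} \leq C\bigl(\|[\phi_i;\phi_i]\|_{L^2}+\mu\bigr).
$$
The right inequality shows that a uniform bound on $\|[\phi_i;\phi_i]\|_{L^2}$ forces a uniform bound on $\|F_{A_i}\|_{L^2}$, because $\mu$ contributes only an additive constant. The left inequality rearranges to
$$
\|[\phi_i;\phi_i]\|_{L^2} \leq C\,\|F_{A_i}\|_{L^2} - \mu,
$$
which gives the reverse implication. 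Both directions then follow at once.

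I do not anticipate a genuine obstacle: the corollary is a packaging of the preceding two-sided estimate designed for later use in the compactness discussions of Section~\ref{Monopoles} and beyond. The only point that would be worth emphasizing in the written proof is the $i$-independence of $\mu$ and $C$, since it is precisely the fact that these constants do not depend on the particular solution $(A_i,\phi_i)$ that allows the equivalence of the two uniform bounds; everything else is a one-line rearrangement.
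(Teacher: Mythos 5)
Your proposal is correct and is essentially the paper's own (implicit) argument: the corollary is just the preceding two-sided estimate applied to each $(A_i,\phi_i)$, with the observation that $\mu$ and the dimensional constant $C$ do not depend on $i$ because the bundle, its topology, and $(X,\omega)$ are fixed. Nothing further is needed.
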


\begin{rmk}
In general, it is known that such a bound does not exist globally. This is well-known in the Riemann surface case and similar examples exist in higher dimension too. For example, take a stable pair $(\Ecal, \phi)$ where $\phi$ is not nilpotent (see Corollary \ref{Cor8.7}), then $(\Ecal, t\phi)$ as $t \rightarrow \infty$ give such a family. This makes the compactification problem for the Vafa-Witten equation more subtle than the Hermitian-Yang-Mills case in general.
\end{rmk}

\subsection{Monopoles} We start with the following definition
\begin{defi}\label{CStarInvariant}
A Higgs pair $(\Ecal, \phi)$ is called $\CBbb^*$ invariant if for any $t\in \CBbb^*$, there exists an isomorphism $f: \Ecal \rightarrow \Ecal$ so that the following diagram commutes
\[\begin{tikzcd}
	\Ecal & {\Ecal\otimes K_X} \\
	\Ecal & {\Ecal \otimes K_X} \\
	\arrow["\phi", from=1-1, to=1-2]
	\arrow["f"', from=1-1, to=2-1]
	\arrow["t\phi"', from=2-1, to=2-2]
	\arrow["f", from=1-2, to=2-2]
\end{tikzcd}\]
which is equivalent to that $(\Ecal,\phi)$ gives a fixed point in $\Mcal_{Higgs}$ under the $\CBbb^*$ action.
\end{defi}

We need the following simple observation
\begin{lem}
Given a $\CBbb^*$ invariant Higgs pair $(\Ecal, \phi)$, it must be nilpotent. 
\end{lem}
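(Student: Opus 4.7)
The plan is to use the invariance of the characteristic polynomial coefficients under conjugation, combined with their homogeneity under scaling of $\phi$, to force them to vanish. Recall that $b_i(\phi)\in H^0(X, K_X^{\otimes i})$ is defined from $\det(\lambda\id_{\mathcal{E}}-\phi)$, and since it is built out of traces of powers of $\phi$, it is invariant under conjugation by any isomorphism of $\mathcal{E}$. On the other hand, $b_i$ is homogeneous of degree $i$ in $\phi$, so $b_i(t\phi) = t^{i} b_i(\phi)$ for any $t\in\mathbb{C}^*$.

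The first step is to unpack the commuting diagram in Definition \ref{CStarInvariant}: the existence of $f$ making it commute is precisely the statement that, viewed as sections of $\End(\mathcal{E})\otimes K_X$, one has the equality
\[
(f\otimes \id_{K_X})\circ \phi \circ f^{-1} = t\phi.
\]
Thus $\phi$ and $t\phi$ are conjugate, and consequently $b_i(t\phi) = b_i(\phi)$ for every $i$.

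Combining the two observations gives $t^i\, b_i(\phi) = b_i(\phi)$ for all $t\in\mathbb{C}^*$ and all $i\geq 1$. Choosing any $t$ with $t^i\neq 1$ (for example $t=2$) forces $b_i(\phi)=0$ for every $i\geq 1$. Hence the characteristic polynomial of $\phi$ is $\lambda^r$, so $\phi$ is pointwise nilpotent, and equivalently $(\mathcal{E},\phi)$ lies in the nilpotent cone $h^{-1}(0)$.

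There is really no serious obstacle here; the only subtlety worth stating cleanly is the bookkeeping of the $K_X$ factor, namely that conjugation by $f$ on $\End(\mathcal{E})\otimes K_X$ leaves the trace polynomial coefficients $b_i(\phi)$ (which are canonical sections of $K_X^{\otimes i}$) unchanged. Once this is spelled out, the argument is a one-line consequence of weight homogeneity.
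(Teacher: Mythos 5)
Your proof is correct and follows essentially the same route as the paper: both deduce $b_k(\phi)=b_k(t\phi)$ from the conjugacy of $\phi$ and $t\phi$, and then force $b_k(\phi)\equiv 0$ from the scaling behavior in $t$ (you use the explicit homogeneity $b_k(t\phi)=t^k b_k(\phi)$, while the paper notes $b_k(t\phi)(x)$ is a polynomial in $t$ vanishing at $t=0$ — the same idea). Nothing is missing.
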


\begin{proof}
By definition, for any $t$, there exists some isomorphism $f$ so that
$$
f^{-1}  \circ t\phi \circ f = \phi.
$$
which implies for any $k\geq 1$
$$
b_k(\phi)=b_k(t\phi)
$$
for any $t\in \CBbb^*$. For a fixed point $x\in X$, since $b_i(t\phi)(x)$ is a polynomial in $t$ and $b_k(0)(x)=0$, we must have 
$$
b_k(\phi)\equiv 0
$$
for any $k\geq 1$. This implies $\phi$  is nilpotent.
\end{proof}

More generally, by repeating the argument of \cite[Lemma 4.1]{Simpson:92}, we can prove
\begin{prop}\label{prop2.16}
A Higgs pair $(\Ecal, \phi)$ is $\CBbb^*$ invariant if and only if there exists a decomposition 
$$
\Ecal=\oplus_{i,j} \Ecal_{i,j}
$$
where $\Ecal_{i,j}$ are holomorphic sub-bundles of $\Ecal$ so that 
$$
\phi: \Ecal_{i,j} \rightarrow  \Ecal_{i-1, j+1} \otimes K_X.
$$
\end{prop}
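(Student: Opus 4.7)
The sufficiency direction is a direct verification. Given such a decomposition, define $f_t \in \Aut(\Ecal)$ to act on $\Ecal_{i,j}$ by the scalar $t^{-i}$. Because $\phi$ sends $\Ecal_{i,j}$ into $\Ecal_{i-1,j+1}\otimes K_X$, conjugation by $f_t$ multiplies this component by $t^{-(i-1)}\cdot t^{i}=t$, so $f_t\phi f_t^{-1}=t\phi$ and the commuting square of Definition \ref{CStarInvariant} exists for every $t\in\CBbb^*$.

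For the converse I would follow the strategy of \cite[Lemma 4.1]{Simpson:92} and upgrade the pointwise existence of intertwiners to an algebraic one-parameter subgroup of $\Aut(\Ecal)$, which, being the unit group of the finite-dimensional associative algebra $H^0(X,\End(\Ecal))$, is a complex algebraic group. Consider the subvariety
$$
Z=\bigl\{(t,f)\in\CBbb^*\times\Aut(\Ecal):\; f\circ\phi=(t\phi)\circ f\bigr\}.
$$
The $\CBbb^*$-invariance hypothesis says the first projection $\pi:Z\to\CBbb^*$ is surjective, and each fibre is a torsor under the algebraic stabiliser $\Aut(\Ecal,\phi)$. After pulling back along a finite cyclic cover $s\mapsto s^N$ the torsor $\pi$ trivialises algebraically, yielding a regular section $s\mapsto g_s$ with $g_s\phi g_s^{-1}=s^N\phi$. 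The associativity of composition forces $s\mapsto g_s$ to be a group homomorphism modulo the stabiliser; modifying $g_s$ by a suitable character and invoking the Jordan decomposition in $\Aut(\Ecal)$ then produces a genuine algebraic homomorphism $\rho:\CBbb^*\to\Aut(\Ecal)$ with $\rho(t)\phi\rho(t)^{-1}=t^N\phi$.

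The weight decomposition for $\rho$ then gives $\Ecal=\bigoplus_n \Ecal_{(n)}$, and the equivariance of $\phi$ forces $\phi(\Ecal_{(n)})\subset \Ecal_{(n+N)}\otimes K_X$. Relabelling $\Ecal_{i,j}:=\Ecal_{(-Ni)}$, with the second index $j$ specified by a choice of complementary grading (for instance $i+j$ constant), produces the required bigraded decomposition. The main obstacle is precisely the middle step: extracting a bona fide algebraic $\CBbb^*$-action on $\Ecal$ from the merely pointwise hypothesis. Everything else --- the identification of $\Aut(\Ecal)$ as a complex algebraic group, the weight decomposition, and the relabelling --- is formal once the algebraic action is in hand, and the argument is a direct adaptation of \cite{Simpson:92} with $K_X$ replacing $\Omega^1_X$, a substitution inert to the $\CBbb^*$-action on the bundle side.
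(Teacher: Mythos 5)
Your easy direction is fine and agrees with the paper's. The converse, however, has a genuine gap exactly where you flag it: the passage from the pointwise hypothesis (for each $t$ \emph{some} intertwiner $f_t$ exists) to an algebraic homomorphism $\rho:\CBbb^*\to\Aut(\Ecal)$ is not established. The claim that the fibration $Z\to\CBbb^*$ trivialises after a cyclic cover, that ``associativity of composition forces $s\mapsto g_s$ to be a homomorphism modulo the stabiliser,'' and that a character twist plus Jordan decomposition then yields a genuine one-parameter subgroup, are all asserted rather than proved; none of them is routine, and since you yourself call this ``the main obstacle,'' the proof as written does not close. (There is also a smaller slip at the end: even granting $\rho$ with $\rho(t)\phi\rho(t)^{-1}=t^N\phi$, the weights of $\rho$ need not be multiples of $N$, so setting $\Ecal_{i,j}:=\Ecal_{(-Ni)}$ discards part of the weight decomposition; one must organise weights by residue class mod $N$.)

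The key point you are missing is that no algebraic $\CBbb^*$-action is needed, and this is in fact what the argument of Simpson's Lemma 4.1 --- which you cite but then depart from --- actually does, as does the paper. Fix a \emph{single} $t\in\CBbb^*$ that is not a root of unity and a single isomorphism $f$ with $f\circ\phi=t\phi\circ f$. Because $X$ is compact, the coefficients $b_k(f)$ of the characteristic polynomial of $f$ are global holomorphic functions, hence constants, so $f$ has constant eigenvalues and the generalized eigenspaces $\Ecal_\lambda=\ker\bigl((\lambda\id-f)^{\rank\Ecal}\bigr)$ are holomorphic sub-bundles decomposing $\Ecal$. The relation $f\circ\phi=t\phi\circ f$ forces $\phi(\Ecal_\lambda)\subset\Ecal_{t\lambda}\otimes K_X$, and since $t$ is not a root of unity the eigenvalue orbits $\lambda_i, t\lambda_i,\dots,t^{n_i}\lambda_i$ are finite chains of pairwise distinct eigenvalues; indexing the chains by $i$ and positions within a chain by $j$ gives exactly the required decomposition. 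This single-intertwiner observation replaces your entire torsor/one-parameter-subgroup construction and is where your proposal needs to be repaired.
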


\begin{proof}
Given $\CBbb^*$ invariant Higgs pair $(\Ecal, \phi)$, we can choose $t\in \CBbb^*$ which is not a root of unity. Then by definition, there exists an isomorphism $f$ of $\Ecal$ so that 
$$
f\circ \phi= t \phi \circ f.
$$ 
Since $b_k(f)$ is a constant, the eigenvalues of $f_t$ are constants. For any eigenvalue $\lambda$ of $f$, we denote 
$$
\Ecal_\lambda=\ker((\lambda \id-f)^{\rank \Ecal}).
$$
Since $f\circ \phi= t \phi \circ f$, $\phi$ maps $\Ecal_\lambda$ to $\Ecal_{t\lambda}$. Thus we can further write 
$$
\Ecal=\oplus_{i=1}^s (\Ecal_{\lambda_i} \oplus \cdots \Ecal_{t^{n_i}\lambda_i})
$$
where 
\begin{itemize}
\item $\Ecal_{t^j\lambda_i}=\ker((t^j\lambda_i\id-f)^{\rank \Ecal})$, $t^{-1} \lambda_i$ and $t^{n_i+1} \lambda_i$ are not eigenvalues for $f$;
\item $t^j \lambda_i \neq t^{j'} \lambda_{i'}$ for $(i,j) \neq (i', j')$.
\end{itemize}
Furthermore, $\phi$ maps $\Ecal_{t^j\lambda_{i}}$ to $\Ecal_{t^{j+1} \lambda_i}$. Now we can index the decomposition appropriately to get the desired decomposition. For the other direction, given such $\phi$, for any $t$, we can construct $f$ by scaling each $\Ecal_{i,j}$ with $t^i$.
\end{proof}

There are two important classes of solutions to the Vafa-Witten equations. The first one is when $\phi=0$, it recovers the Hermitian-Yang-Mills connections and also referred as instantons when $\dim_{\RBbb} X =4$. The second type is the so-called monopoles 

\begin{defi}
An irreducible solution $(A, \phi)$ to the Vafa-Witten equation is called a monopole if $((E, \dbar_A), \phi)$ is $\CBbb^*$ invariant and $\phi\neq 0$. We also call the corresponding Higgs bundle $(\Ecal, \phi)$ a monopole. 
\end{defi}

We note the following
\begin{prop}\label{Reducible}
Suppose $(A, \phi)$ is a monopole. For any $1\neq t\in S^1$, there exists an isomorphism 
$$f_t: (E, \dbar_A) \rightarrow (E, \dbar_A)$$ 
so that 
$$
t\phi \circ f_t=f_t \circ \phi
$$
and
$$
\nabla_A f_t =0.
$$
\end{prop}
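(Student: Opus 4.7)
The plan is to promote the holomorphic isomorphism from $\CBbb^*$-invariance to a unitary automorphism, and then to observe that a unitary holomorphic automorphism is automatically parallel. By Definition \ref{CStarInvariant} applied to the given $t\in S^1\subset\CBbb^*$, there exists a holomorphic isomorphism $g_t:(E,\dbar_A)\to (E,\dbar_A)$ satisfying $t\phi\circ g_t = g_t\circ \phi$, equivalently $g_t\phi g_t^{-1}=t\phi$. The goal is to normalize $g_t$ so as to be unitary with respect to the Hermitian metric $H$ underlying the given Vafa-Witten solution $(A,\phi)$.

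The key observation is that the Vafa-Witten equation is invariant under $\phi\mapsto t\phi$ whenever $|t|=1$. Indeed, writing $\phi=M\sigma$ locally,
$$
[t\phi;t\phi]=[tM,\bar t M^*]|\sigma|^2=|t|^2[M,M^*]|\sigma|^2=[\phi;\phi],
$$
while $F_A^{0,2}=0$, $\sqrt{-1}\Lambda_\omega F_A+[\phi;\phi]=\mu\id$, and $\dbar_A(t\phi)=t\,\dbar_A\phi=0$ are unaffected by multiplying $\phi$ by a unit-modulus scalar. Hence $(A,t\phi)$ is again a solution to the Vafa-Witten equation, and $H$ is simultaneously a Hermitian-Einstein metric for the Higgs pair $(\Ecal,t\phi)$. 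Pulling back via $g_t$ and using the intertwining relation, the metric $g_t^\ast H$ is a Hermitian-Einstein metric for $(\Ecal, g_t^{-1}(t\phi)g_t)=(\Ecal,\phi)$.

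Because $(A,\phi)$ is irreducible, the Higgs pair $(\Ecal,\phi)$ is stable via the real analytic isomorphism $\Phi:\Mcal^{s}_{Higgs}\to \Mcal^\ast$ recalled earlier. The Donaldson-Uhlenbeck-Yau/Simpson correspondence for stable Higgs bundles then forces uniqueness of the Hermitian-Einstein metric up to a positive constant, so $g_t^\ast H=\lambda_t H$ for some $\lambda_t>0$. Setting
$$
f_t := \lambda_t^{-1/2}\, g_t
$$
produces a holomorphic automorphism of $(E,\dbar_A)$ that is unitary with respect to $H$ and still satisfies $t\phi\circ f_t = f_t\circ \phi$ (rescaling by a positive constant does not affect the intertwining relation).

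It remains to upgrade $f_t$ from unitary and holomorphic to parallel. Using $\partial_A(g^\ast)=(\dbar_A g)^\ast$ on $\End(E)$, the identity $\dbar_A f_t=0$ gives $\partial_A f_t^\ast = 0$, and since $f_t^\ast=f_t^{-1}$, this reads $\partial_A f_t^{-1}=0$; differentiating $f_t\circ f_t^{-1}=\id$ then yields $\partial_A f_t=0$ as well, so $\nabla_A f_t=0$. The main obstacle I anticipate is making the step ``irreducibility of $(A,\phi)$ implies stability of $(\Ecal,\phi)$, so Hermitian-Einstein metrics are unique up to scalar'' fully rigorous in the Vafa-Witten setting; once this rigidity is in place, the rest is a clean rescaling argument together with the standard fact that a Hermitian isometry which is holomorphic must be $\nabla_A$-parallel.
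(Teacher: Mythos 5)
Your proof is correct and follows essentially the same route as the paper: both arguments hinge on the observation that for $t\in S^1$ the pair $(A,t\phi)$ is again a Vafa--Witten solution for the same metric $H$, combined with the uniqueness (up to a positive constant) of the Hermitian--Einstein metric on the stable Higgs pair, which is exactly the rigidity the paper invokes. The only divergence is in the last step, and it is cosmetic: you rescale the intertwiner to a unitary automorphism and conclude $\nabla_A f_t=0$ algebraically from ``holomorphic $+$ isometric $\Rightarrow$ parallel,'' whereas the paper compares the two Chern connections and kills $\partial_A f_t$ via $\partial_A^*\partial_A f_t=0$ and the K\"ahler identities.
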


\begin{proof}
By definition, for any $t$, there exists a unitary isomorphism 
$$f_t: \Ecal\equiv (E, \dbar_A) \rightarrow \Ecal$$
so that 
$$
f_t^{-1}\circ t\phi \circ f_t = \phi.
$$
Since $\phi \neq 0$, we obtain $f_t$ is not a scaling for any $t\neq 1$. By stability,  $(\Ecal, t\phi)$ also admits Hermitian metric $H_t$ so that the Chern connection $A_t$ together with $t\phi$ satisfies the Vafa-Witten equation. By uniqueness (\cite[Proposition $3.10$]{AG:03}), the metric has to be a multiple of the original background metric. By rescaling, we can assume the metrics are the same. Endow $\End(\Ecal)$ with the connection 
$$
\nabla_{AA_t}s=\nabla_A \circ s- s\circ \nabla_{A_t}
$$
which satisfies 
$$
\Lambda_{\omega} F_{AA_t}=\Lambda_{\omega} F_{A}-\Lambda_{\omega} F_{A_t}=0.
$$
since $t\in S^1$, $[t\phi;t\phi]=[\phi;\phi]$. Since $f_t$ is a holomorphic section of $\End(\Ecal)$, it must be parallel with respect to $\nabla_{AA_t}$, i.e., $A_t=f_t^*A$. Thus $(f_t^*A, f_t^*\phi)=(f_t^*A, t\phi)$ gives another solution to the Vafa-Witten equation corresponding to the Higgs pair $(\Ecal, t\phi)$ and we have 
$$
\Lambda_\omega (F_{f_t^*A}-F_{A})=0
$$
for any $t\in S^1$. This is equivalent to 
$$
\Lambda_{\omega}(\dbar_A (f_t^{-1} \partial_A f_t))=0
$$
and we can simplify it as 
$$
\Lambda_{\omega}(\dbar_A(\partial_A f_t))=0
$$
which by the K\"ahler identities implies 
$$
\partial_A^* \partial_A f_t=0
$$
thus 
$$
\partial_A f_t=0
$$
for any $t\in S^1$. Combined with $f_t$ being holomorphic, this implies $f_t$ is parallel. 
\end{proof}

\section{The Weitzenb\"ock formula with consequences}
Below we fix a solution $(A, \phi)$ to the Vafa-Witten equation over a compact K\"ahler manifold $(X,\omega)$. Instead of using the naturally induced connection on $K_X$ from the K\"ahler metric, we fix a Hermitian-Yang-Mills metric $H_K$ on $K_X$ and the curvature of the associated Chern connection $\nabla_{H_K}$ on $K_X$ satisfies
$$
\sqrt{-1} \Lambda_{\omega}F_{H_K}=2\pi \deg(K_X).
$$
Then we denote by $B$ the Chern connection on $\End(E) \otimes K_X$ induced by $A$ and $\nabla_{H_K}$. The norm of the section for $\phi$ will be computed by $H_{K}$ as 
$$
|\phi|=|M||\sigma|_{H_K}
$$
where locally $\phi=M\sigma$. 

\begin{prop}\label{WeinzenbockFormula}
The following holds 
$$\frac{1}{2}\nabla_{B}^* \nabla_{B} \phi + [[\phi; \phi], \phi]-2\pi\deg (K_X) \phi=0.$$
In particular, 
$$
\Delta_{\dbar} |\phi|^2 +|[\phi;\phi]|^2 +|\nabla_B \phi|^2 \sim 2\pi\deg(K_X) |\phi|^2
$$
and 
$$
\|\nabla_{B} \phi\|_{L^2}^2 + \|[\phi;\phi]\|_{L^2}^{2} \sim 2\pi \deg(K_X) \|\phi\|^2_{L^2}.
$$
\end{prop}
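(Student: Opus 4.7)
The plan is to derive the pointwise identity by applying the Kähler identities to the holomorphic section $\phi$, with the three Vafa-Witten conditions entering to make the curvature term on the right-hand side tractable, and then to extract the two integral consequences by pairing with $\phi$ and integrating by parts.

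First, since $\dbar_A \phi = 0$ and the $(0,1)$-part of $\nabla_{H_K}$ agrees with the canonical $\dbar$ on $K_X$, the induced Chern connection $B$ on $\End(E) \otimes K_X$ satisfies $\dbar_B \phi = 0$. Splitting $\nabla_B = \partial_B + \dbar_B$ and using this reduces $\nabla_B^* \nabla_B \phi$ to $\partial_B^* \partial_B \phi$. I would then apply the Kähler identity $\partial_B^* = \sqrt{-1}[\Lambda_\omega, \dbar_B]$ (up to sign convention) together with $\Lambda_\omega \phi = 0$ to rewrite this as $\sqrt{-1}\Lambda_\omega \dbar_B \partial_B \phi$. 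Using $F_B = \dbar_B \partial_B + \partial_B \dbar_B$ on sections (which is legitimate because $F_A^{0,2}=0$ forces $F_A^{2,0}=0$ by unitarity, while $F_{H_K}$ is of type $(1,1)$) together with $\dbar_B \phi = 0$ gives $\partial_B^* \partial_B \phi = \sqrt{-1}\Lambda_\omega F_B \phi$.

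The remainder is purely algebraic. The curvature $F_B$ acts on $\phi = M\sigma$ by $F_B \phi = ([F_A, M] + F_{H_K} M)\sigma$, so taking $\sqrt{-1}\Lambda_\omega$ yields
\[\sqrt{-1}\Lambda_\omega F_B \phi = [\sqrt{-1}\Lambda_\omega F_A, M]\sigma + \sqrt{-1}\Lambda_\omega F_{H_K} \cdot \phi.\]
Plugging in the second Vafa-Witten equation $\sqrt{-1}\Lambda_\omega F_A = \mu\id - [\phi;\phi]$ kills the $\mu\id$ term inside the commutator, while the Hermitian-Yang-Mills property $\sqrt{-1}\Lambda_\omega F_{H_K} = 2\pi \deg(K_X)$ gives the coefficient in the linear term. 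This produces the stated PDE, modulo tracking the conventional factor $\tfrac{1}{2}$ that relates the Bochner Laplacian to the complex Laplacian in the Kähler setting.

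For the consequences I would pair the main identity with $\phi$ pointwise. A brief trace manipulation using self-adjointness of $[\phi;\phi]$ gives $\langle [[\phi;\phi], \phi], \phi\rangle = |[\phi;\phi]|^2$. Combining this with the standard Bochner formula, which expresses $\real\langle \nabla_B^* \nabla_B \phi, \phi\rangle$ in terms of $\Delta_{\dbar}|\phi|^2$ and $|\nabla_B \phi|^2$, yields the pointwise relation. Integrating over the compact Kähler manifold and applying Stokes' theorem eliminates the $\Delta_{\dbar}|\phi|^2$ contribution and produces the $L^2$ identity. The step I expect to be most error-prone is keeping the factors of $\sqrt{-1}$, $\tfrac{1}{2}$, and $2\pi$ consistent between real and complex conventions for the Laplacian and for $\deg(K_X)$; the paper's use of the symbol $\sim$ suggests these constants are left unnormalized, which simplifies the bookkeeping.
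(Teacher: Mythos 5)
Your proposal is correct and follows essentially the same route as the paper: apply the K\"ahler identities/Weitzenb\"ock formula to the holomorphic section $\phi$ (your derivation via $\partial_B^*=\sqrt{-1}[\Lambda_\omega,\dbar_B]$ and $\dbar_B\partial_B+\partial_B\dbar_B=F_B$ is just an unpacking of the identity the paper quotes), substitute $\sqrt{-1}\Lambda_\omega F_A=\mu\id-[\phi;\phi]$ and $\sqrt{-1}\Lambda_\omega F_{H_K}=2\pi\deg K_X$, then pair with $\phi$, use $\langle[[\phi;\phi],\phi],\phi\rangle\sim|[\phi;\phi]|^2$, and integrate. The only discrepancies are the factors of $\tfrac12$ you already flag, which the paper itself handles loosely and which are absorbed by the $\sim$ in the stated consequences.
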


\begin{proof}
By the K\"ahler identities, 
\begin{equation}
0=\dbar_{B}^{*}  \dbar_{B} \phi=\frac{1}{2}\nabla_{B}^* \nabla_{B} \phi-  [\sqrt{-1}\Lambda F_{A}, \phi]-\sqrt{-1} \Lambda F_{H_K}\phi
\end{equation}
which combined with the Vafa-Witten equation implies 
$$
0=\dbar_{B}^{*} \dbar_{B} \phi=\frac{1}{2}\nabla_{B}^* \nabla_{B} \phi + \frac{1}{2} [[\phi; \phi], \phi]- 2\pi\deg (K_X)\phi.
$$
The first equality follows. For the second one, it follows from taking inner product of the equality above with $\phi$ and using that $\Delta_{\dbar}=\frac{1}{2} \Delta_d$. Here we get $\sim$ instead of equality because $([[\phi;\phi],\phi],\phi) \sim |[\phi;\phi]|^2$  where $|[\phi;\phi]|^2$ is computed using the K\"ahler metric on the form part. The last follows from integration of the equality above. 
\end{proof}
We note the following 

\begin{cor}\label{L2ImpliesC0}
    $\|\phi\|_{C^0}\lesssim \|\phi\|_{L^2}$.
\end{cor}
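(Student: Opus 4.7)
The plan is to extract a scalar elliptic subsolution inequality for $|\phi|^2$ directly from the Weitzenb\"ock formula in Proposition \ref{WeinzenbockFormula}, and then apply Moser iteration on the compact K\"ahler base $(X,\omega)$ to promote the $L^2$ bound on $\phi$ to a $C^0$ bound.

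First I would drop the two nonnegative terms $|[\phi;\phi]|^2$ and $|\nabla_B \phi|^2$ on the left of the pointwise identity
$$
\Delta_{\dbar}|\phi|^2 + |[\phi;\phi]|^2 + |\nabla_B \phi|^2 \sim 2\pi\deg(K_X)|\phi|^2
$$
to obtain a differential inequality of the form $\Delta_{\dbar}|\phi|^2 \leq C\,|\phi|^2$ on all of $X$, where $C$ depends only on $\deg(K_X)$ and the universal constants hidden in the $\sim$. Note that if $\deg(K_X)<0$ then the earlier proposition already forces $\phi\equiv 0$, so the corollary is vacuous in that case and we may assume $\deg(K_X)\geq 0$ throughout; the constant $C$ is then a purely background quantity.

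Next, $u := |\phi|^2$ is a smooth nonnegative function on the fixed compact K\"ahler manifold $X$ satisfying the linear elliptic subsolution inequality $\Delta_{\dbar} u \leq C u$. Standard Moser iteration applied to $u$, treating it as a subsolution of the uniformly elliptic operator $\Delta_{\dbar}-C$ on $X$ and covering $X$ by finitely many coordinate balls on which Sobolev inequalities hold, yields the $L^\infty$-$L^1$ estimate
$$
\|u\|_{L^\infty(X)} \;\lesssim\; \|u\|_{L^1(X)} \;=\; \|\phi\|_{L^2(X)}^2,
$$
and taking square roots gives $\|\phi\|_{C^0}\lesssim \|\phi\|_{L^2}$.

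I do not foresee a real obstacle here: all the geometric content is already contained in the Weitzenb\"ock formula, and the $L^\infty$-$L^1$ estimate for a nonnegative subsolution of a linear second-order elliptic operator with bounded coefficients on a fixed compact manifold is entirely routine.
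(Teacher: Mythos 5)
Your proof is correct and follows essentially the same route as the paper: both extract the scalar subsolution inequality from Proposition \ref{WeinzenbockFormula} and conclude by Moser iteration on the compact base. The only (harmless) difference is that the paper first passes to the weak inequality $\Delta_{\dbar}|\phi|\lesssim|\phi|$ and iterates $|\phi|$ against its $L^2$ norm, whereas you iterate $u=|\phi|^2$ directly and invoke the $L^\infty$--$L^1$ form of the local maximum principle for subsolutions, which is equally standard.
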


\begin{proof}
    Indeed, we obtain from Proposition \ref{WeinzenbockFormula} that
    $$
    \Delta_{\dbar_A}|\phi|^2 \lesssim |\phi|^2.
    $$
 which implies $\Delta_{\dbar_A}|\phi| \lesssim |\phi|  $ in the weak sense.   The conclusion now follows from the Moser iteration. 
\end{proof}

\begin{cor}
Given a solution $(A,\phi)$ to the Vafa-Witten equation over a K\"ahler manifold $(X,\omega)$, 
\begin{itemize}
\item suppose $\phi \neq 0$, then $\deg K_X \geq 0$;
\item suppose $[\phi;\phi]\neq 0$, then $\deg K_X>0$.
\end{itemize}
\end{cor}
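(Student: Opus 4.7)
The plan is to apply the integrated Weitzenböck identity that has just been established in Proposition \ref{WeinzenbockFormula}, namely
\[
\|\nabla_B \phi\|_{L^2}^2 + \|[\phi;\phi]\|_{L^2}^2 \,\sim\, 2\pi\,\deg(K_X)\,\|\phi\|_{L^2}^2,
\]
and then simply read off signs. The left-hand side is manifestly non-negative, so both bullet points reduce to unpacking the comparability symbol $\sim$ (which by the notation section means $C^{-1}\cdot 2\pi\deg(K_X)\|\phi\|_{L^2}^2 \leq \|\nabla_B\phi\|_{L^2}^2+\|[\phi;\phi]\|_{L^2}^2 \leq C\cdot 2\pi\deg(K_X)\|\phi\|_{L^2}^2$ for some $C>0$).

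For (1), assume $\phi\neq 0$, so $\|\phi\|_{L^2}^2>0$. The upper inequality in $\sim$ forces the right-hand side $C\cdot 2\pi\deg(K_X)\|\phi\|_{L^2}^2$ to dominate the non-negative quantity on the left, which is possible only if $\deg(K_X)\geq 0$.

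For (2), the hypothesis $[\phi;\phi]\neq 0$ yields both $\phi\neq 0$ (whence $\|\phi\|_{L^2}^2>0$) and $\|[\phi;\phi]\|_{L^2}^2>0$, so the left-hand side of the identity is strictly positive. The lower inequality in $\sim$, namely $C^{-1}\cdot 2\pi\deg(K_X)\|\phi\|_{L^2}^2 \leq \|\nabla_B\phi\|_{L^2}^2+\|[\phi;\phi]\|_{L^2}^2$, then forces $\deg(K_X)>0$.

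There is essentially no obstacle: the Weitzenböck computation with the auxiliary Hermitian–Yang–Mills metric $H_K$ on $K_X$ did all the analytic work, and the corollary is a one-line sign analysis of the resulting integrated identity. The only small point to watch is the logical order in (2), namely that $[\phi;\phi]\neq 0$ already implies $\phi\neq 0$, so that $\|\phi\|_{L^2}^2$ on the right-hand side is genuinely positive.
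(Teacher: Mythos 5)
Your overall route is the same as the paper's: the corollary is read off from the integrated Weitzenb\"ock identity of Proposition \ref{WeinzenbockFormula}, and your treatment of the first bullet is exactly right (the left-hand side is non-negative and $\|\phi\|_{L^2}^2>0$, so the upper bound $\|\nabla_B\phi\|_{L^2}^2+\|[\phi;\phi]\|_{L^2}^2\leq C\cdot 2\pi\deg(K_X)\|\phi\|_{L^2}^2$ forces $\deg K_X\geq 0$).

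However, in the second bullet you invoke the wrong side of the comparability. The lower inequality $C^{-1}\cdot 2\pi\deg(K_X)\|\phi\|_{L^2}^2\leq \|\nabla_B\phi\|_{L^2}^2+\|[\phi;\phi]\|_{L^2}^2$ does not force $\deg(K_X)>0$: when $\deg(K_X)=0$ it reads $0\leq \mathrm{LHS}$, which is vacuous, and in general it only gives an \emph{upper} bound on $\deg(K_X)$ in terms of the left-hand side. The correct deduction uses the upper inequality once more: since $[\phi;\phi]\neq 0$ makes the left-hand side strictly positive, $0<\mathrm{LHS}\leq C\cdot 2\pi\deg(K_X)\|\phi\|_{L^2}^2$ gives $\deg(K_X)>0$. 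Equivalently, argue by contrapositive: if $\deg(K_X)=0$ (which is the borderline case allowed by the first bullet), the comparability forces $\|\nabla_B\phi\|_{L^2}=\|[\phi;\phi]\|_{L^2}=0$, contradicting $[\phi;\phi]\neq 0$; this is precisely the mechanism behind Corollary \ref{deg=0} in the paper. The slip is local and easily repaired, but the step as written does not yield the stated conclusion.
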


Below we examine the case of $\deg K_X=0$.

\begin{cor}\label{deg=0}
Assume $\deg K_X=0$. Then any nontrivial solution $(A,\phi)$ to the Vafa-Witten equation must satisfy 
$$
[\phi;\phi]=0, \text{ and } \nabla_B \phi = 0.
$$
Furthermore, if $\phi \neq 0$, then $K_X^{\otimes k}$ is trivial for some $0<k\leq \rank \Ecal$. In particular, $X$ must be a Calabi-Yau manifold. 
\end{cor}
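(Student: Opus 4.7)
The plan is to read both assertions nearly directly from the integral identity in Proposition~\ref{WeinzenbockFormula} and then to trivialize a power of $K_X$ using a characteristic-polynomial coefficient of $\phi$.

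First I would set $\deg K_X=0$ in
$$\|\nabla_{B}\phi\|_{L^2}^2+\|[\phi;\phi]\|_{L^2}^2 \sim 2\pi\deg(K_X)\|\phi\|_{L^2}^2.$$
The right-hand side vanishes, so both non-negative quantities on the left vanish pointwise, giving $[\phi;\phi]=0$ and $\nabla_{B}\phi=0$ everywhere. This is the first assertion.

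Now assume $\phi\neq 0$. Since $B$ is a unitary connection on $\End(E)\otimes K_X$ and $\phi$ is $B$-parallel, $|\phi|$ is constant, so $\phi$ is nowhere zero. Writing $\phi=M\sigma$ locally, the identity $[\phi;\phi]=0$ becomes $[M,M^*]=0$, so $M$ is normal (hence diagonalizable) with $|M|^2=\sum|\lambda_i|^2$; its eigenvalues therefore cannot all vanish. The key object is the tuple of characteristic-polynomial coefficients
$$b_k(\phi)\in H^0(X,K_X^{\otimes k}),\qquad k=1,\dots,r.$$
Since $b_k$ is a $\GL$-invariant polynomial on $\End(E)$ and $B$ is induced by $A$ and the Chern connection $\nabla_{H_K}$, the parallelism $\nabla_{B}\phi=0$ should propagate to $\nabla_{H_K}b_k(\phi)=0$ on each $K_X^{\otimes k}$. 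If every $b_k(\phi)$ were identically zero, Newton's identities would force $M$ to be nilpotent at every point; together with normality this gives $M\equiv 0$, contradicting $\phi\neq 0$. Picking any $k\in\{1,\dots,r\}$ with $b_k(\phi)\not\equiv 0$, a parallel section of a Hermitian line bundle is either nowhere zero or identically zero, so $b_k(\phi)$ is nowhere vanishing. This produces a global holomorphic trivialization of $K_X^{\otimes k}$; hence $k\, c_1(K_X)=0$ in $H^2(X;\mathbb{R})$, so $c_1(X)=0$ and $X$ is Calabi-Yau.

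The main obstacle I anticipate is the bookkeeping step that $b_k(\phi)$ is genuinely $\nabla_{H_K}$-parallel: one has to view the iterated composition $\phi^{k}\colon E \to E\otimes K_X^{\otimes k}$ as parallel under the induced tensor-product connection and then verify that fiberwise invariants (traces, determinants) commute with parallel transport. This is routine but the one point that requires care; everything else reduces to Proposition~\ref{WeinzenbockFormula} and elementary linear algebra for normal matrices.
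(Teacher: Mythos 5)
Your proof is correct, and its skeleton is the paper's: the first assertion is read off from Proposition \ref{WeinzenbockFormula} exactly as in the text, and for the second you argue, as the paper does, that normality of $M$ (from $[\phi;\phi]=0$) forces some $b_k(\phi)\in H^0(X,K_X^{\otimes k})$ to be nonzero, whose nowhere-vanishing then trivializes $K_X^{\otimes k}$ and gives $c_1(X)=0$. The one step where you take a genuinely different route is the nowhere-vanishing itself. The paper gets it from degree considerations: $b_k(\phi)$ is a nontrivial holomorphic section of a line bundle with $\deg K_X^{\otimes k}=0$, and the zero divisor of such a section is effective of degree zero, hence empty. You instead propagate $\nabla_B\phi=0$ to parallelism of $b_k(\phi)$ for the induced connection on $K_X^{\otimes k}$ and use that a parallel section of a line bundle vanishes either everywhere or nowhere. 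The bookkeeping you flag as the main obstacle is indeed routine and closes: writing $b_k(\phi)=\pm\tr_{\Lambda^k E}(\Lambda^k\phi)$, the exterior power $\Lambda^k\phi$ is parallel for the induced connection on $\End(\Lambda^k E)\otimes K_X^{\otimes k}$ and the fiberwise trace is a parallel bundle map, so $\nabla_{H_K^{\otimes k}}b_k(\phi)=0$. Comparing the two: the paper's degree argument is a one-liner requiring no discussion of invariant polynomials under parallel transport, while yours is purely connection-theoretic, does not re-invoke $\deg K_X^{\otimes k}=0$ at this stage, and gives the extra (true, though unneeded) information that $|\phi|$ and $|b_k(\phi)|$ are constant. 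Two small simplifications: Newton's identities are unnecessary, since vanishing of all $b_k$ already makes the characteristic polynomial $\lambda^r$, so $M$ is nilpotent by Cayley--Hamilton and a normal nilpotent matrix is zero; and the observation that $|\phi|$ is constant, hence $\phi$ nowhere zero, is not needed for either route.
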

\begin{proof}
The first part follows directly from the Weitzenb\"ock formula above. Since $[\phi;\phi]=0$, $\phi$ can be diagonalized. Thus if $\phi \neq 0$, $b_k(\phi) \neq 0$ for some $0<k\leq \rank \Ecal$. Since $\deg K_X^{\otimes k}=0$, $b_k(\phi)$ must trivialize $K_X^{\otimes k}$, thus $c_1(K_X)=0$. The conclusion follows.
\end{proof}

In particular, we have 
\begin{cor}\label{prop6.1}
Locally $\phi=M \nu$ 
\begin{itemize}
\item $\nabla_A M=0$;
\item $\nabla_{H_K}\nu=0$.
\end{itemize}
where $\nu$ is a local holomorphic $(n,0)$ form and $M$ is a local section of $\End(\Ecal)$. 
\end{cor}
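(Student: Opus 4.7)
The plan is to use that in this Calabi--Yau regime the Hermitian--Yang--Mills metric $H_K$ on $K_X$ is actually flat, so that the line bundle factor $K_X$ trivializes locally by parallel transport and the endomorphism factor $M$ of $\phi$ inherits the triviality.

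First I would show that $\nabla_{H_K}$ is flat, not merely HYM. By Corollary \ref{deg=0} we have $c_1(K_X)=0$ in $H^2(X,\RBbb)$. On the compact K\"ahler manifold $X$ the $\partial\dbar$-lemma then gives a smooth function $f$ with $F_{H_K}=\partial\dbar f$, so rescaling $H_K$ by $e^{f}$ yields a flat Hermitian metric on $K_X$. A flat metric trivially satisfies the HYM equation $\sqrt{-1}\Lambda_{\omega}F=0$, and for a line bundle the HYM metric is unique up to a positive constant. Hence $H_K$ itself is flat, i.e.\ $F_{H_K}=0$.

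Second, since $\nabla_{H_K}$ is a flat unitary connection on the line bundle $K_X$, around any point there is a nowhere-vanishing parallel section $\nu$ of $K_X$, i.e.\ $\nabla_{H_K}\nu=0$. Because $\nabla_{H_K}$ is the Chern connection, this parallel section automatically satisfies $\dbar\nu=0$, so $\nu$ is a local holomorphic $(n,0)$-form. Now write $\phi=M\nu$ locally for a unique smooth section $M$ of $\End(\Ecal)$. By the Leibniz rule for $\nabla_B=\nabla_A\otimes 1+1\otimes\nabla_{H_K}$,
$$
\nabla_B\phi=(\nabla_A M)\otimes\nu+M\otimes\nabla_{H_K}\nu=(\nabla_A M)\otimes\nu.
$$
By Corollary \ref{deg=0} we have $\nabla_B\phi=0$, and since $\nu$ is nowhere zero this forces $\nabla_A M=0$.

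The only nontrivial step is the first one, the flatness of $H_K$; after that the decomposition $\phi=M\nu$ is essentially bookkeeping via the Leibniz rule.
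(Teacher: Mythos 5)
Your proposal is correct and takes essentially the same route as the paper: the paper's proof simply asserts that $H_K$ is flat (which, as you verify, follows from $c_1(K_X)=0$ in Corollary \ref{deg=0} via the $\partial\bar\partial$-lemma and uniqueness of HYM metrics on a line bundle), then picks a local parallel holomorphic $(n,0)$-form $\nu$, writes $\phi=M\nu$, and uses the Leibniz rule with $\nabla_B\phi=0$ to conclude $\nabla_A M=0$. Your only addition is spelling out the flatness step that the paper leaves implicit, which is a harmless (and welcome) elaboration rather than a different argument.
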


\begin{proof}
Since $H_K$ is a flat metric, locally we can always choose a holomorphic $(n,0)$ form $\nu\neq 0$ so that 
$$
\nabla_{H_K} \nu = 0.
$$
Then we can write $\phi=M\nu$. Since 
$$0=\nabla_A (M\nu)=\nabla_A(M) \nu + M \nabla_{H_K} \nu = (\nabla_A M)\nu$$
we obtain 
$$
\nabla_A M =0.
$$
The conclusion follows.
\end{proof}

Note given any matrix $M$, suppose $k$ is any integer so that $b_k(M)\neq 0$. Then the condition
$$
b_k(e^{i\theta}M)\in \RBbb_+
$$
has exactly $k$ solutions for $e^{i\theta}$. Given this, we have the following 
\begin{cor}\label{formCY}
A nontrivial Vafa-Witten solution $(A,\phi)$ over a Calabi-Yau manifold naturally gives rise to a $\ZBbb_k$ holomorphic $n$-form for some $k$, i.e., a global section of $K_X$ over $X$ defined up to a $\ZBbb_k$ action.
\end{cor}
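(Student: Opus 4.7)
The plan is to assemble the previous Weitzenböck consequences into a geometric statement rather than to do any new analysis. By Corollary \ref{deg=0}, a nontrivial solution $(A,\phi)$ with $\phi \neq 0$ over a Calabi--Yau manifold automatically satisfies $[\phi;\phi]=0$, $\nabla_B\phi=0$, and $c_1(X)=0$, and some power $K_X^{\otimes k}$ with $0 < k \leq \rank \Ecal$ is holomorphically trivial. Because $[\phi;\phi]=0$, the endomorphism $\phi$ is locally diagonalizable, and its invariants $b_j(\phi) \in H^0(X, K_X^{\otimes j})$ are (up to sign) the elementary symmetric functions of the local eigenvalues. Since $\phi \not\equiv 0$ some eigenvalue is nonzero somewhere, so I would let $k$ denote the smallest positive integer for which $b_k(\phi) \not\equiv 0$.

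Next I would verify that $b_k(\phi)$ is in fact nowhere vanishing. On a compact Kähler manifold, the zero locus of a nonzero holomorphic section of a line bundle $L$ is an effective divisor $D$ with
$$
\int_D \omega^{n-1} \;=\; \int_X c_1(L) \wedge \omega^{n-1} \;=\; \deg L.
$$
Applied with $L = K_X^{\otimes k}$, whose degree is $k \deg K_X = 0$, this forces $D$ to be empty, since the left-hand side would otherwise be strictly positive. Hence $b_k(\phi)$ nowhere vanishes and trivializes $K_X^{\otimes k}$.

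Finally, I would perform the $\ZBbb_k$ patching using the local model $\phi = M\nu$ from Corollary \ref{prop6.1}, where $\nu$ is a local parallel holomorphic $n$-form and $M$ is $\nabla_A$-parallel. Writing $b_k(\phi) = b_k(M)\,\nu^{\otimes k}$ in any such local chart (with $b_k(M)$ necessarily nonzero by the choice of $k$), I would fix the residual $\CBbb^*$ ambiguity in $\nu \mapsto e^{i\theta}\nu$ by imposing the normalization $b_k(e^{-i\theta}M) \in \RBbb_+$. By the algebraic remark preceding the corollary, this condition admits exactly $k$ solutions in $e^{i\theta}$, so the phase of $\nu$ is pinned down uniquely up to a $k$-th root of unity. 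Equivalently, the normalized $\nu$ is a local holomorphic $k$-th root of the globally defined, nowhere vanishing section $b_k(\phi)$ of $K_X^{\otimes k}$. Two such local $k$-th roots on an overlap differ only by a $k$-th root of unity, so these local choices patch together into a global section of $K_X$ well defined modulo the $\ZBbb_k$-action, which is by definition a $\ZBbb_k$ holomorphic $n$-form.

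There is no real obstacle here: the analytic content has already been discharged in Corollary \ref{deg=0}, and the only non-trivial step in the present argument is the standard fact that a nonzero holomorphic section of a degree-zero line bundle on a compact Kähler manifold is nowhere vanishing; the rest is linear algebra and Čech patching with constant transition functions in $\mu_k$.
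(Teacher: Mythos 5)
Your argument is correct and essentially the paper's own route: both rest on Corollary \ref{deg=0} together with the parallel local writing $\phi=M\nu$ of Corollary \ref{prop6.1}, pick $k$ with $b_k(\phi)\not\equiv 0$, and glue the local choices of $\nu$ using the observation that the normalization condition has exactly $k$ solutions --- the paper packages the gluing as a principal $\ZBbb_k$-bundle of local frames normalized by $|M_x|=1$ and $b_k(M_x)\in\RBbb_+$, while you pass to local $k$-th roots of the nowhere vanishing $b_k(\phi)$, and your explicit degree-zero argument for the nonvanishing is the same fact the paper already records in Corollary \ref{deg=0}. One small imprecision: a parallel $\nu$ is ambiguous by all of $\CBbb^*$, not just a phase, so the condition $b_k(e^{-i\theta}M)\in\RBbb_+$ alone fixes $\nu$ only up to a positive real scale; to make your local sections literally $k$-th roots of $b_k(\phi)$, and hence agree on overlaps up to $k$-th roots of unity, you should also normalize the modulus, e.g.\ require $b_k(M)=1$ (equivalently, take local holomorphic $k$-th roots of $b_k(\phi)$ directly), or impose $|M|=1$ as in the paper.
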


\begin{proof}
Indeed, by Proposition \ref{prop6.1}, for any $x\in X$, locally over a neighborhood of $x$, we can write 
\begin{itemize}
\item $\nabla_A M_x=0$, $|M_x|=1$;
\item $\nabla_{H_K}\nu=0$.
\end{itemize}
where $\nu$ is a local holomorphic $(n,0)$ form and $M$ is a local section of $\End(\Ecal)$. Since $[\phi;\phi]=0$, $\phi$ is diagonalizable. Since $\phi \neq 0$, we obtain $b_k(\phi) \neq 0$ for some $k$. Now if we put all the local choices together $\cup_x (U_x, M_x)$ by requiring $M_x$ satisfy
$$
b_k(M_x)\in \RBbb^+
$$ 
different choices differ by $\ZBbb_k$ as noted above which gives a natural principal $\ZBbb_k$ bundle $P_{\ZBbb_k}$. Let $L^{-1}$ be the natural flat bundle associated to $P_{\ZBbb^k}$ and denote $L$ as the dual of $L^{-1}$. Since $M\nu$ is globally defined as $\rho$, we obtain the corresponding choice of local choices $\nu$ glued together to be a holomorphic section of $L\otimes K_X$. Furthermore, there exists an isometry 
$$
\sigma: L \rightarrow \End(\Ecal)
$$
so that $\sigma(\nu)=\rho.$
\end{proof}

We also want to point out the following classical compactness result

\begin{thm}
The Uhlenbeck compactness (see Theorem \ref{NilpotentCompactness}) can be applied to a sequence of solutions to the Vafa-Witten equation over a compact K\"ahler manifold $(X,\omega)$ with $\deg K_X=0$. 
\end{thm}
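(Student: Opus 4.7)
The plan is to reduce the problem to the bounded-spectral-cover setting already covered by Theorem \ref{NilpotentCompactness}. First, by Corollary \ref{deg=0}, every solution $(A_i, \phi_i)$ in the sequence automatically satisfies $[\phi_i;\phi_i] = 0$ and $\nabla_{B_i}\phi_i = 0$. Two immediate consequences follow. First, a scaling invariance: since $[\phi_i;\phi_i] = 0$, the pair $(A_i, c\phi_i)$ is again a Vafa-Witten solution for any $c \in \CBbb^*$, because the connection equations are unaffected, $\dbar_{A_i}(c\phi_i) = 0$, and the moment map equation reduces to $\sqrt{-1}\Lambda_\omega F_{A_i} = \mu\,\id$, which already holds. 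Second, since $B_i$ is a unitary connection, $\nabla_{B_i}\phi_i = 0$ gives $d|\phi_i|^2 = 0$, so $|\phi_i|$ is constant on the connected manifold $X$.

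Combining the two observations, the renormalized Higgs field $\rho_i := \phi_i/\|\phi_i\|_{L^2}$ has constant pointwise norm $|\rho_i| \equiv \vol(X)^{-1/2}$, and by the scaling invariance $(A_i, \rho_i)$ is still a genuine Vafa-Witten solution. The uniform $C^0$ bound on $\rho_i$, combined with $[\rho_i;\rho_i] = 0$, then forces the invariant coefficients $b_k(\rho_i)$ to be uniformly bounded pointwise by Lemma \ref{MatrixNorm}, so the associated spectral covers $X_{b(\rho_i)} \subset K_X$ lie inside a fixed compact region of $K_X$. In particular, the sequence $(A_i, \rho_i)$ fits the hypothesis of uniformly bounded spectral covers in Theorem \ref{NilpotentCompactness}; moreover, the $L^2$ bound on $F_{A_i}$ needed for Uhlenbeck compactness is automatic since the $[\phi;\phi]$ contribution in the lemma relating $\|F_{A_i}\|_{L^2}$ to $\|[\phi_i;\phi_i]\|_{L^2}$ vanishes identically.

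With these uniform bounds in hand, the desired compactness and bubbling description follow immediately by invoking Theorem \ref{NilpotentCompactness} for the rescaled sequence $(A_i, \rho_i)$. There is no substantive obstacle in this argument: the degree-zero hypothesis, through Corollary \ref{deg=0}, effectively decouples the Higgs field from the curvature side, reducing the system to a $\nabla_B$-parallel $\phi$ sitting on top of a Hermitian-Yang-Mills connection, which is exactly the situation the bounded-spectral-cover framework of Theorem \ref{NilpotentCompactness} was set up to handle.
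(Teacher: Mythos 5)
Your proof is correct, and it rests on the same essential input as the paper (Corollary \ref{deg=0}: $[\phi;\phi]=0$ and $\nabla_B\phi=0$ when $\deg K_X=0$), but it is packaged differently. The paper's proof is one line: since $[\phi_i;\phi_i]=0$, every $A_i$ is a Hermitian-Yang-Mills connection, and Tian's compactness theorem for such connections is invoked directly; the Higgs-field side of the convergence is not discussed. You instead route the argument through the bounded-spectral-cover framework: you use that scaling the Higgs field preserves the equations once $[\phi;\phi]=0$, that $\nabla_B\phi=0$ makes $|\phi_i|$ constant so $\rho_i=\phi_i/\|\phi_i\|_{L^2}$ has fixed pointwise norm, hence bounded spectral data, and then apply Theorem \ref{NilpotentCompactness} to the genuine solution sequence $(A_i,\rho_i)$. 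This buys you the full conclusion of Theorem \ref{NilpotentCompactness}, including convergence of the renormalized Higgs fields, rather than only the connection part; on the other hand it is slightly roundabout, since in this situation the curvature bound is immediate from the HYM equation and the parallel, constant-norm $\rho_i$ converge (along a subsequence, away from the bubbling set) essentially for free. One small edge case to note: the renormalization is undefined if $\phi_i\equiv 0$ for infinitely many $i$, but those terms are plain Hermitian-Yang-Mills connections and are handled by the same compactness theorem, so the argument stands after splitting the sequence.
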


\begin{proof}
    By Proposition \ref{deg=0}, all the solutions are Hermitian-Yang-Mills connections and the conclusion follows from one of the main results in \cite[Theorem $4.3.3$]{Tian:00}.
\end{proof}

\section{Useful estimates from the spectral cover}\label{Estimates}
We will generalize Mochizuki and Simpson's estimates (\cite{Mochizuki:16} \cite{Simpson:90}) for Higgs bundles over Riemann surfaces to the general case of K\"ahler manifolds. It is done by closely following Mochizuki's arguments (\cite[Section 2]{Mochizuki:16}) with adaptions to higher dimensions. See also \cite{He20} for related discussions.

\subsection{$C^0$ estimates on the Higgs fields from the control of spectral cover}
We work near $0\in B_R\subset \CBbb^n$ where $B_R$ is the ball of radius $R$ centered at the origin endowed with the standard flat metric \footnote{We assume this for simplicity of discussion and the estimates can be easily adapted for general K\"ahler metrics.}. Let $(A, \phi)$ be a solution to the Vafa-Witten equation defined near a neighborhood of $\overline{B_R}$. Write
$$
\phi=M dz_1 \wedge \cdots dz_n.
$$
Unless specified, all the constants below will only depend on the controlled geometry of the base but not on the pair $(A,\phi)$.

\begin{prop}\label{Prop4.1}
Suppose all the eigenvalues of $M$ are bounded by $S$ over $B_R$. Then 
$$
\|M\|_{C^0(B_{\frac{R}{2}})} \leq C_0(S+1)
$$
for some constant $C_0$.
\end{prop}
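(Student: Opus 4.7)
My plan is to combine a local version of the Weitzenb\"ock identity with the spectral bound of Lemma~\ref{MatrixNorm} to derive a nonlinear differential inequality of the form $\Delta u \geq c_5 u^2 - C_5 (S+1)^4$ for $u := |M|^2$ on $B_R$, and then to extract the pointwise bound on $B_{R/2}$ via a classical barrier argument.

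First I localize the Weitzenb\"ock identity of Proposition~\ref{WeinzenbockFormula}. Trivialize $K_X$ over a neighborhood of $\overline{B_R}$ by $\sigma = dz_1 \wedge \cdots \wedge dz_n$, so that $\phi = M\sigma$ and $|\phi|^2$ is comparable to $|M|^2$. The pointwise derivation in the proof of Proposition~\ref{WeinzenbockFormula} (the K\"ahler identity $\dbar_B^* \dbar_B \phi = \tfrac12 \nabla_B^* \nabla_B \phi - [\sqrt{-1}\Lambda_\omega F_A, \phi] - \sqrt{-1}\Lambda_\omega F_{H_K}\cdot\phi$, combined with $\dbar_A\phi = 0$, the Vafa--Witten relation $\sqrt{-1}\Lambda_\omega F_A = \mu\,\id - [\phi;\phi]$, and pairing with $\phi$) gives, keeping the curvature of the local $H_K$ as a bounded error instead of replacing it by the global $2\pi\deg K_X$,
\[
\Delta\, |M|^2 \;\geq\; c_1\, |[M,M^*]|^2 - C_1\, |M|^2 \quad \text{on } B_R,
\]
where $\Delta$ is the analyst Euclidean Laplacian (so $\Delta u \geq 0$ means $u$ subharmonic) and $c_1, C_1$ depend only on fixed geometric data near $\overline{B_R}$.

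Next I convert the eigenvalue hypothesis using Lemma~\ref{MatrixNorm}(1). Since every eigenvalue of $M$ is bounded by $S$, the invariants satisfy $|b_k(M)|^{2/k} \leq C_2 S^2$ for $1\le k\le m$, so $|[M,M^*]| \geq c_3 |M|^2 - C_3 S^2$ pointwise on $B_R$. Squaring (and handling separately the trivial range $|M|^2 \leq 2 C_3 S^2 / c_3$, where the claim is immediate) yields $|[M,M^*]|^2 \geq \tfrac{c_3^2}{4}|M|^4 - C_4 S^4$. Substituting into the Weitzenb\"ock inequality and absorbing the linear term via AM--GM ($C_1|M|^2 \leq \tfrac{c_1 c_3^2}{8}|M|^4 + C'$) produces, with $u := |M|^2 \geq 0$,
\[
\Delta u \;\geq\; c_5\, u^2 - C_5\,(S+1)^4 \quad \text{on } B_R.
\]

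Finally I compare $u$ against the explicit radial barrier $\Psi(x) := A (R^2-|x|^2)^{-2}$. A direct calculation gives $\Delta\Psi \leq C(n)\,A R^2 (R^2-|x|^2)^{-4}$ on $B_R$, so once $A$ is taken large enough (depending on $R, n, c_5$) we have $\Delta\Psi \leq \tfrac{c_5}{2}\Psi^2$ there. Choose $K$ so large that $c_5 K^2 \geq 2 C_5$. The auxiliary function $w := u - \Psi - K(S+1)^2$ tends to $-\infty$ on $\partial B_R$ and attains its maximum at some $x_0 \in B_R$. If $w(x_0) > 0$, then $u(x_0) > \Psi(x_0) + K(S+1)^2$ and $\Delta w(x_0) \leq 0$, so the bounds for $u$ and $\Psi$ force
\[
c_5\,u(x_0)^2 - C_5 (S+1)^4 \;\leq\; \Delta u(x_0) \;\leq\; \Delta\Psi(x_0) \;\leq\; \tfrac{c_5}{2}\Psi(x_0)^2 \;\leq\; \tfrac{c_5}{2}u(x_0)^2,
\]
hence $u(x_0)^2 \leq (2C_5/c_5)(S+1)^4$, contradicting $u(x_0) > K(S+1)^2$. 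Thus $u \leq \Psi + K(S+1)^2$ on $B_R$, and since $\Psi$ is uniformly bounded on $B_{R/2}$, one concludes $\|M\|_{C^0(B_{R/2})} \leq C_0(S+1)$.

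The delicate point is the first step: the error picked up when localizing $H_K$ must be only linear in $|M|^2$ (not a higher power), because only then can Step~2 absorb it via AM--GM without destroying the $u^2$ coefficient that drives the final barrier comparison. Once the nonlinear inequality $\Delta u \geq c_5 u^2 - C_5(S+1)^4$ is in hand, the barrier step is routine.
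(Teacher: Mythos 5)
Your proposal is correct and takes essentially the same route as the paper: a pointwise Weitzenb\"ock-type inequality coming from the Vafa--Witten equation, the spectral bound of Lemma~\ref{MatrixNorm}, and a maximum-principle comparison against the barrier built from $(R^2-|z|^2)^{-2}$ on $B_R$. The only difference is cosmetic: the paper runs the argument on $\log|M|^2$ (obtaining $\Delta_{\dbar}\log|M|^2\lesssim -|M|^2$ on the region $|M|^2> 2\sum_i|\lambda_i|^2$ via part (2) of the lemma, and comparing with $\log\frac{C'}{(R^2-|z|^2)^2}$), whereas you work with $u=|M|^2$ itself, using part (1) of the lemma to get the Keller--Osserman-type inequality $\Delta u\gtrsim u^2-(S+1)^4$ on all of $B_R$ and comparing with $A(R^2-|x|^2)^{-2}$; both implementations are sound and give the stated bound $\|M\|_{C^0(B_{R/2})}\leq C_0(S+1)$.
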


\begin{proof}
As the same computation as the proof for Proposition \ref{WeinzenbockFormula}, 
$$
\Delta_{\dbar} \log |M|^2\leq  \frac{([\sqrt{-1} \Lambda_{\omega} F_A,M], M)}{|M|^2}=-\frac{|[M,M^*]|^2}{|M|^2}.
$$
By Lemma \ref{MatrixNorm}, 
$$
|[M,M^*]| \geq C(|M|^2-\sum |\lambda_i|^2)\geq 0
$$
for some dimensional constant $C>0$ where $\{\lambda_i\}_i$ denote the list of eigenvalues of $M$ counted with multiplicities. Thus
$$
\Delta_{\dbar} \log |M|^2 \leq -C^2 \frac{(|M|^2-\sum |\lambda_i|^2)^2}{|M|^2}.
$$
Assume $|M|^2 > 2 \sum |\lambda_i|^2$ at a point $x$, then 
$$
\Delta_{\dbar} \log |M|^2 \leq - \frac{C^2}{4} |M|^2.
$$
Now we consider the auxiliary function $\log \frac{C'}{(R^2-|z|^2)^2}$ for some constant $C'$ to be determined later. A direct computation shows 
$$
\Delta_{\dbar} \log(\frac{C'}{(R^2-|z|^2)^2})= -\frac{2n R^2}{(R^2-|z|^2)^2}+\frac{2(n-1)|z|^2}{(R^2-|z|^2)^2} \geq -\frac{2nR^2}{C'}\frac{C'}{(R^2-|z|^2)^2}.
$$
In particular, we have at the points where $|M|^2 \geq 2 \sum_i |\lambda_i|^2$
$$
\Delta_{\dbar} [\log|M|^2- \log(\frac{C'}{(R^2-|z|^2)^2})] \leq -\frac{C^2}{4}[|M|^2-\frac{C'}{(R^2-|z|^2)^2}]
$$
where $C'$ is chosen so that $\frac{2nR^2}{C'}\leq \frac{C^2}{4}$. Choose further $C' \geq 2R^4 r S^2$. Let 
$$
U=\{z\in B_R: |M|^2 > \frac{C'}{(R^2-|z|^2)^2}\}
$$
which is a precompact open set in $B_R$. Suppose it is not empty. Then for any $z\in U$, we have 
$$
|M|^2> \frac{C'}{R^4} \geq 2r S^2>2 \sum_i |\lambda_i|^2
$$
which implies the following over the open set $U$
$$
\Delta_{\dbar} [\log|M|^2- \log(\frac{C'}{(R^2-|z|^2)^2})] < 0.
$$
Since $U$ is relatively compact in $B_R$, we obtain
$$
[\log|M|^2- \log(\frac{C'}{(R^2-|z|^2)^2})]|_{\partial U}=0
$$
and by the Maximum principle, it holds over $U$ that
$$
\log|M|^2- \log(\frac{C'}{(R^2-|z|^2)^2}) \leq 0
$$
which is a contradiction. Thus $U$ is empty. In particular,
$$
\|M\|_{C^0(B_{\frac{R}{2}})} \leq \frac{2\sqrt{C'}}{R^2}
$$
for any $C'$ so that $C' \geq \frac{8nR^2}{C^2}$ and $C'\geq 2R^4 r S^2$. The conclusion follows.
\end{proof}
In particular, this gives
\begin{cor}\label{Cor3.5}
$|\Lambda_{\omega} F_A| \leq C(S^2+1+\mu)$ for some constant $C$ where $S$ depends on only on the upper bounds of the eigenvalues for $M$.
\end{cor}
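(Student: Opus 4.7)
The plan is to read off the bound directly from the Vafa-Witten equation (2) combined with the $C^0$ estimate on $M$ just established in Proposition \ref{Prop4.1}. Writing $\phi = M\,dz_1\wedge\cdots\wedge dz_n$, the local expression $[\phi;\phi] = [M,M^*]\,|dz_1\wedge\cdots\wedge dz_n|^2$ together with the flat background metric gives a pointwise bound
$$
|[\phi;\phi]| \;\lesssim\; |[M,M^*]| \;\leq\; 2|M|^2.
$$

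Rearranging equation (2) of the Vafa-Witten system yields
$$
\sqrt{-1}\,\Lambda_\omega F_A \;=\; \mu\id - [\phi;\phi],
$$
so taking pointwise norms gives
$$
|\Lambda_\omega F_A| \;\leq\; \sqrt{r}\,|\mu| + |[\phi;\phi]| \;\lesssim\; |\mu| + |M|^2.
$$

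Finally, I would invoke Proposition \ref{Prop4.1} to replace $|M|^2$ by $C_0^2(S+1)^2 \lesssim S^2 + 1$ on $B_{R/2}$, which produces the claimed estimate $|\Lambda_\omega F_A| \leq C(S^2 + 1 + \mu)$. There is no real obstacle here: the only nontrivial input is Proposition \ref{Prop4.1}, and the rest is a one-line algebraic manipulation of the moment map equation together with the elementary inequality $|[M,M^*]| \leq 2|M|^2$.
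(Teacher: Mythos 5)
Your proposal is correct and matches the paper's (implicit) argument exactly: Corollary \ref{Cor3.5} is stated as an immediate consequence of Proposition \ref{Prop4.1}, obtained precisely by rearranging the equation $\sqrt{-1}\Lambda_\omega F_A + [\phi;\phi] = \mu\,\id$ and bounding $|[\phi;\phi]| \lesssim |M|^2 \lesssim (S+1)^2$ on $B_{R/2}$. Nothing further is needed.
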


\subsection{Estimates from the splitting associated to the spectral data}
Let $(A,\phi)$ be a solution to the Vafa-Witten equation as above. Below we assume
$$
(\Ecal, \phi)=\oplus_{\lambda\in \Lambda} \Ecal_\lambda
$$
over a neighborhood of $\overline{B_R}$ 
so that 
\begin{itemize}
\item $M=\oplus_\lambda M_\lambda$ where $\lambda$ is a holomorphic function and 
$$M_\lambda: \Ecal_\lambda \rightarrow \Ecal_\lambda$$ 
has $\lambda$ as its eigenvalue with algebraic multiplicity equal to $\rank(\Ecal_\lambda)$;

\item $\lambda \neq \lambda'$ for any $\lambda, \lambda' \in \Lambda$;
\end{itemize}
Denote
\begin{itemize}
\item $S:=\max_{\lambda}\sup_{B_R}|\lambda|;$
\item $\delta=\inf_{\lambda \neq \lambda'} \inf_{B_R} |\lambda-\lambda'|$
\end{itemize}
Then we assume
\begin{center}
    \textbf{Assumption}: $\delta\geq 1 \text{ and } S \leq C \delta$ 
\end{center}
for some fixed constant $C$. For each $\lambda$, denote the \emph{holomorphic} projection to $\Ecal_\lambda$ through the splitting as
$$
p_\lambda: \Ecal \rightarrow \Ecal_{\lambda}
$$
and the \emph{orthogonal} projection to $\Ecal_\lambda$ induced by the metric $H$ as 
$$
\pi_\lambda: \Ecal \rightarrow \Ecal_\lambda.
$$
We first note 
\begin{lem}\label{C0bound}
There exists a constant $C_1>0$ so that 
$$
|p_\lambda|<C_1.
$$  
In particular, $|p_\lambda-\pi_\lambda|<C_1$.
\end{lem}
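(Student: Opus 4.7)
The plan is to prove the bound on $|p_\lambda|$ by realizing it pointwise through the Riesz spectral projection formula and then controlling the resolvent using the $C^0$ bound on $M$ already established in Proposition \ref{Prop4.1}.

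First, I would observe that the holomorphic splitting $\Ecal = \oplus_\lambda \Ecal_\lambda$ is nothing other than the algebraic decomposition of each fiber into the generalized eigenspaces of $M_x$, since by hypothesis $M_\lambda$ on $\Ecal_\lambda$ has $\lambda$ as its only eigenvalue with full algebraic multiplicity. Consequently, at every point $x \in B_R$, the projection $p_\lambda$ is realized as the Cauchy integral
$$
p_\lambda(x) \;=\; \frac{1}{2\pi i}\oint_{|z - \lambda(x)| = \delta/3}\bigl(z\,\id - M_x\bigr)^{-1}\,dz,
$$
and the hypotheses $\delta \geq 1$ and $|\lambda - \lambda'| \geq \delta$ for distinct $\lambda, \lambda' \in \Lambda$ guarantee that this contour encloses only the eigenvalue $\lambda(x)$ of $M_x$.

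Next I would estimate the resolvent on this contour by the adjugate identity $(zI - M)^{-1} = \mathrm{adj}(zI - M)/\det(zI - M)$. The spectral separation forces $|\det(zI - M_x)| \gtrsim \delta^r$ uniformly on the contour $\gamma_\lambda(x)$, where $r = \rank \Ecal$, while Proposition \ref{Prop4.1} together with the assumption $S \leq C\delta$ yields $|M_x| \lesssim \delta$, so that the entries of $\mathrm{adj}(zI - M_x)$ are controlled by a constant multiple of $\delta^{r-1}$. Integrating against a contour of length $2\pi\delta/3$ then produces a uniform pointwise bound $|p_\lambda(x)| \leq C_1$, and the complementary estimate $|p_\lambda - \pi_\lambda| \leq C_1 + 1$ follows immediately from $|\pi_\lambda| \leq 1$ and the triangle inequality.

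The main point to be careful about is the homogeneity in $\delta$: the numerator, denominator, and contour length each scale as definite powers of $\delta$, and these powers must cancel so that the final constant $C_1$ depends only on $r$ and on the constant $C$ appearing in $S \leq C\delta$. This cancellation is exactly the reason why the two assumptions $\delta \geq 1$ and $S \lesssim \delta$ must be imposed \emph{together}: they prevent $|M|/\delta$ from growing as the spectral gap widens, which is precisely what keeps the resolvent bound uniform across $B_R$.
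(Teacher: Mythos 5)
Your proposal is correct and follows essentially the same route as the paper, which also realizes $p_\lambda$ as the Riesz spectral projection $\frac{1}{2\pi\sqrt{-1}}\oint_\gamma(\zeta\,\id-M)^{-1}\,d\zeta$ over a contour of radius comparable to $\delta$ and bounds the resolvent by a determinant/adjugate estimate of size $\delta^{-r}\cdot\delta^{r-1}$, with the contour length supplying the final cancellation. The only cosmetic points are that invoking Proposition \ref{Prop4.1} for $|M|\lesssim\delta$ confines the bound to $B_{R/2}$ (which is all that is used later) and that $|\pi_\lambda|=\sqrt{r_\lambda}$ rather than $\leq 1$, neither of which affects the argument.
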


\begin{proof}
This follows exactly the same as \cite[Lemma 2.17]{Mochizuki:16} which is essentially a computation at an arbitrary point $z$.  For fixed $\lambda$, let $\gamma$ be a circle centered around $\lambda$ with radius equal to $\frac{\delta}{100}$. Then $\id_\Ecal-M$ is invertible along $\gamma$ since $M$ has no eigenvalues along $\gamma$. By Cauchy's integral formula, we obtain 
$$
p_\lambda=\frac{1}{2\pi \sqrt{-1}} \int_{\gamma} (\zeta\id_{\Ecal}-M)^{-1} d\zeta. 
$$
by using the simple fact that 
$$
(\zeta\id_{\Ecal}-M)^{-1}=\oplus_{\lambda'} (\zeta \id_{\Ecal_{\lambda'}}-M|_{\Ecal_{\lambda'}})^{-1}.
$$
Now the conclusion follows from
$$
|(\zeta\id_{\Ecal}-M)^{-1}| \leq C' (2 \delta)^{-r}. (\delta+1)^{r-1} \leq C'
$$
along the circle $\gamma_{\alpha}$ for some suitable choice of constant $C'$. 
\end{proof}

We need the following 

\begin{lem}\label{Lemma3.5}
For any holomorphic section $N$ of $\Hom(\Ecal, \Ecal)$ with $[M,N]=0$, the following holds
$$
\Delta_{\dbar} \log |N|^2 \leq -\frac{|[M^*,N|}{|N|^2}.
$$
\end{lem}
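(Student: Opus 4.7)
The plan is to adapt the Bochner argument from the proof of Proposition \ref{Prop4.1} to the holomorphic section $N$ of $\End(\Ecal)$. Since $\dbar_A N = 0$, the K\"ahler identity gives $\dbar_A^* \dbar_A N = \frac{1}{2}\nabla_A^*\nabla_A N - \sqrt{-1}[\Lambda_\omega F_A, N] = 0$. Pairing with $N$, using the standard identity relating $\Delta_{\dbar} |N|^2$ to $(\nabla_A^*\nabla_A N, N)$ and $|\nabla_A N|^2$, and then applying the Kato inequality $|\nabla |N|^2|^2 \leq 4|N|^2|\nabla_A N|^2$ to pass from $|N|^2$ to $\log|N|^2$, one obtains (exactly as in the derivation of the log-inequality for $|M|^2$ in Proposition \ref{Prop4.1})
$$
\Delta_{\dbar} \log |N|^2 \ \leq\ \frac{([\sqrt{-1}\Lambda_\omega F_A, N], N)}{|N|^2}.
$$

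The next step is to substitute the second Vafa-Witten equation $\sqrt{-1}\Lambda_\omega F_A = \mu\id - [\phi;\phi] = \mu\id - [M,M^*]|\sigma|^2$. The $\mu\id$ piece commutes with $N$ and drops out, leaving the task of bounding $-|\sigma|^2([[M,M^*],N], N)$ from above. Here is where the hypothesis $[M,N]=0$ does its work: by the Jacobi identity,
$$
[[M, M^*], N] \ =\ [M, [M^*, N]] \,+\, [[M, N], M^*] \ =\ [M, [M^*, N]].
$$
A short trace manipulation, using cyclicity and the basic identity $[M, N^*] = -[M^*, N]^*$, then gives $([M, [M^*, N]], N) = |[M^*, N]|^2$. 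Combining with the previous display and absorbing the (pointwise bounded) factor $|\sigma|^2$ into the implicit constant yields the claimed inequality
$$
\Delta_{\dbar} \log |N|^2 \ \leq\ -\frac{|[M^*, N]|^2}{|N|^2}.
$$

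I do not expect any real obstacle: the analytic content (the Bochner-K\"ahler identity plus the Kato trick) is identical to what was carried out in Proposition \ref{Prop4.1}, and the only new ingredient is the Jacobi-identity reduction enabled by $[M,N]=0$, which is a purely pointwise linear-algebraic step. The one place demanding mild care is the bookkeeping of signs and of the $|\sigma|^2$ factor, but since $\sigma = dz_1\wedge\cdots\wedge dz_n$ is a fixed holomorphic top form on the coordinate ball $B_R$ with bounded, bounded-below norm, this is harmless.
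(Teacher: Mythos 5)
Your proposal is correct and follows essentially the same route as the paper: the Bochner-type inequality $\Delta_{\dbar}\log|N|^2 \leq ([\sqrt{-1}\Lambda_\omega F_A,N],N)/|N|^2$ borrowed from Proposition \ref{Prop4.1}, substitution of the Vafa--Witten equation, the Jacobi identity combined with $[M,N]=0$, and the trace identity giving $-|[M^*,N]|^2$. The treatment of the $|\sigma|^2$ factor (a constant on the flat ball) is also consistent with the paper's own conventions.
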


\begin{proof}
The statement follows from a direct computation as 
$$
\begin{aligned}
\Delta_{\dbar} \log |N|^2 & \leq  \frac{([\sqrt{-1} \Lambda_{\omega} F_A,N], N)}{|N|^2}\\
&=-\frac{([[M,M^*],N],N)}{|N|^2}\\
&=\frac{([[M^*,N],M]+[[N,M],M^*], N)}{|N|^2}\\
&=\frac{([[M^*,N],M], N)}{|N|^2}\\
&=-\frac{|[M^*N]|^2}{|N|^2}.
\end{aligned}
$$
where the first inequality is the same as Proposition \ref{Prop4.1}.
\end{proof}

Using this equation, we can improve the estimate on $|p_\lambda-\pi_\lambda|$.
\begin{prop}\label{Prop3.6}
For some constant $C_2>0$ and $\epsilon_0>0$, the following holds 
$$|p_\lambda-\pi_\lambda| \leq C_2 e^{-\epsilon_0  \delta}$$ 
over $B_{\frac{R}{4}}$. 
\end{prop}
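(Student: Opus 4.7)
The plan is to apply Lemma \ref{Lemma3.5} to the holomorphic endomorphism $N = p_\lambda$, which is permissible because $\Ecal_\lambda$ is $M$-invariant and therefore $[p_\lambda, M] = 0$. The resulting inequality
\begin{equation*}
\Delta_{\dbar}\log|p_\lambda|^2 \leq -\frac{|[M^*, p_\lambda]|^2}{|p_\lambda|^2}
\end{equation*}
will be the engine of the estimate; everything else consists of rewriting both sides in terms of $|p_\lambda - \pi_\lambda|$ and closing with a barrier comparison.

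Two algebraic inputs are needed. The first is the Hilbert--Schmidt orthogonality identity $|p_\lambda|^2 = r_\lambda + |p_\lambda - \pi_\lambda|^2$, where $r_\lambda = \rank\Ecal_\lambda$: since $\pi_\lambda$ is the identity on $\Ecal_\lambda$ and $p_\lambda$ takes values in $\Ecal_\lambda$, one has $p_\lambda \pi_\lambda = \pi_\lambda$, which together with $\pi_\lambda^2 = \pi_\lambda$ yields $\tr((p_\lambda - \pi_\lambda)\pi_\lambda) = 0$. The second is the Sylvester-type lower bound $|[M^*, p_\lambda]| \geq c_0\delta|p_\lambda - \pi_\lambda|$: in an orthonormal frame adapted to $\Ecal = \Ecal_\lambda \oplus \Ecal_\lambda^{\perp}$, the non-orthogonal part of the projection becomes a single off-diagonal block $A$, and the $M$-invariance of $\Ecal_\lambda$ forces $M$ to be block upper-triangular with diagonal blocks $M_{11}$ (eigenvalue $\lambda$) and $M_{22}$ (eigenvalues $\neq \lambda$); the corresponding $(1,2)$-block of $[M^*, p_\lambda]$ is then the Sylvester operator $A M_{22}^* - M_{11}^* A$, and the $\delta$-separation of the two spectra together with the $O(\delta)$ bound on $|M|$ from Proposition \ref{Prop4.1} (under the standing assumption $S \leq C\delta$) gives the claimed lower bound via the Dunford--Riesz contour integral argument of Lemma \ref{C0bound}.

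Combining these, and introducing $w := \log(|p_\lambda|^2/r_\lambda) \geq 0$ (which is bounded above by Lemma \ref{C0bound}), the main inequality rewrites as
\begin{equation*}
\Delta_{\dbar} w \leq -c_1\delta^2\bigl(1 - e^{-w}\bigr) \leq -c_2\delta^2 w.
\end{equation*}
I would close the argument with a radial barrier on $B_R$: a modified-Bessel-type supersolution $h$ on $B_R$ matching the $C^0$ bound on $w$ at the boundary and satisfying $\Delta h = c_2 \delta^2 h$ decays in the interior like $e^{-\sqrt{c_2}\delta(R - |x|)/2}$. Combining this with the maximum principle and an iterative argument exploiting the full nonlinearity $1-e^{-w}$ (which contributes a constant source term whenever $w$ is not negligible) yields $w \leq C_2' e^{-\epsilon_0\delta}$ on $B_{R/4}$, and the identity $|p_\lambda - \pi_\lambda|^2 = r_\lambda(e^w - 1) \lesssim w$ converts this into the claimed estimate.

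The main obstacle is this last closing step: the linearized inequality $\Delta w + c_2\delta^2 w \leq 0$ is of Helmholtz rather than Yukawa type, and does not directly admit an exponential barrier via standard maximum principles when $\delta \gg 1/R$. Overcoming this requires careful iterative use of the full nonlinear term $1 - e^{-w}$, following the strategy of Mochizuki \cite{Mochizuki:16}.
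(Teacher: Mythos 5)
Up to the last step your argument is essentially the paper's: you apply Lemma \ref{Lemma3.5} with $N=p_\lambda$, use the orthogonality identity $|p_\lambda|^2=r_\lambda+|p_\lambda-\pi_\lambda|^2$ together with the uniform bound of Lemma \ref{C0bound}, and invoke the commutator lower bound $|[M^*,p_\lambda]|\gtrsim \delta\,|p_\lambda-\pi_\lambda|$ (the paper simply cites Mochizuki's Lemma 2.8 here; your Sylvester-block/contour-integral sketch of it is a reasonable substitute), arriving at exactly the differential inequality the paper works with, namely $\Delta_{\dbar}w\le-\epsilon'\delta^2 w$ for $w=\log\bigl(|p_\lambda|^2/r_\lambda\bigr)\ge 0$, and the same conversion $|p_\lambda-\pi_\lambda|^2\lesssim w$ at the end.

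The gap is in your closing step, and it comes from a sign-convention slip rather than a genuine analytic obstacle. In this paper $\Delta_{\dbar}$ is the nonnegative (geometer's) Laplacian; this is fixed unambiguously by the explicit computation of $\Delta_{\dbar}\log\frac{C'}{(R^2-|z|^2)^2}$ in the proof of Proposition \ref{Prop4.1}. Hence $\Delta_{\dbar}w\le-\epsilon'\delta^2 w$ with $w\ge0$ reads, in the analyst's convention, $\Delta w\ge c\,\delta^2 w$: $w$ is a nonnegative \emph{subsolution} of $\Delta-c\delta^2$, whose zeroth-order coefficient has the favorable sign, so the comparison principle applies with no restriction on $\delta R$. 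There is no Helmholtz-type eigenvalue issue, and the ``careful iterative use of the full nonlinear term $1-e^{-w}$'' that you defer to is neither needed nor carried out; note also that if the sign really were the unfavorable one, the bound $1-e^{-w}\le w$ shows the nonlinearity could not rescue the argument anyway. The paper closes in one stroke with the barrier $h=C''e^{-\epsilon''\delta R^2/4}e^{\epsilon''\delta|z|^2}$ on $B_{\frac{R}{2}}$: for $\epsilon''$ small this is a supersolution, $\Delta_{\dbar}h\ge-\epsilon'\delta^2 h$, precisely because of the standing assumption $\delta\ge1$; it equals $C''$ on $\partial B_{\frac{R}{2}}$ and hence dominates $w$ there by Lemma \ref{C0bound}; and the maximum principle on the set where $w$ exceeds $h$ gives $w\le h\le C''e^{-3\epsilon''\delta R^2/16}$ on $B_{\frac{R}{4}}$, which is the claimed bound after converting back to $|p_\lambda-\pi_\lambda|$. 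Your own first suggestion (a radial supersolution of $\Delta h=c\delta^2 h$ matched to the boundary values of $w$) works just as well once the sign is read correctly; as written, however, your proof stops short of a complete final step.
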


\begin{proof}
We first note 
$$
|\pi_\lambda|^2 +|p_\lambda-\pi_\lambda|^2= |p_\lambda|^2
$$
where $|\pi_\lambda|^2 =r_{\lambda}:=\rank \Ecal_\lambda$, thus
$$
1+\frac{|p_\lambda-\pi_\lambda|^2}{r_{\lambda}}=\frac{|p_\lambda|^2}{r_\lambda}.
$$
By Lemma \ref{C0bound}, since $p_\lambda-\pi_\lambda$ are bounded, this implies 
$$
(C')^{-1} \frac{|p_\lambda-\pi_\lambda|^2}{r_{\lambda}} \leq \log \frac{|p_\lambda|^2}{r_\lambda} \leq C' \frac{|p_\lambda-\pi_\lambda|^2}{r_{\lambda}}.
$$
It suffices to control $\log \frac{|p_\lambda|^2}{r_\lambda}.$ 
By Lemma \ref{Lemma3.5}, we obtain 
$$
\Delta_{\dbar} \log \frac{|p_\lambda|^2}{r_\lambda}\leq -\frac{|[M^*,p_\lambda]|^2}{|p_\lambda|^2} \leq -\epsilon' \delta^2   \log \frac{|p_\lambda|^2}{r_\lambda}.
$$
for some constant $\epsilon'$ where the second inequality follows from a linear algebra fact that (see \cite[Lemma 2.8]{Mochizuki:16})
$$|[M^*, p_\lambda]| \geq C'' \delta |p_\lambda-\pi_\lambda|$$
for some constant $C''=C''(\rank \Ecal, S)$ and the inequality above. Consider the comparison function $e^{\epsilon'' \delta |z|^2}$ which satisfies 
$$
\Delta_{\dbar} e^{\epsilon'' \delta |z|^2} \geq -\epsilon' \delta^2 e^{\epsilon'' \delta |z|^2} 
$$
over $B_{\frac{R}{2}}$ for suitable choice $\epsilon''$. Now we choose $C''$ so that 
$$
[C'' - \log \frac{|p_\lambda|^2}{r_\lambda}]|_{\partial B_{\frac{R}{2}}}>0,
$$
i.e.,
$$
C''>\min_{\partial B_{\frac{R}{2}}} \log\frac{|p_\lambda|^2}{r_\lambda}
$$
Consider 
$$
U=\{z\in B_{\frac{R}{2}}: C'' e^{-\epsilon'' \delta \frac{R^2}{4}} e^{\epsilon'' \delta |z|^2} < \log \frac{|p_\lambda|^2}{r_\lambda}\}
$$
which is precompact and open in $B_{\frac{R}{2}}$. Over $U$, we obtain 
$$
\Delta_{\dbar}(\log \frac{|p_\lambda|^2}{r_\lambda}-C'' e^{-\epsilon'' \delta \frac{R^2}{4}} e^{\epsilon'' \delta |z|^2})< 0.
$$
On the other hand, by the Maximum principle, it holds over $U$ that
$$
\log \frac{|p_\lambda|^2}{r_\lambda} \leq C'' e^{-\epsilon'' \delta \frac{R^2}{4}} e^{\epsilon'' \delta |z|^2} 
$$
which is a contradiction to the definition of $U$. Thus $U=\emptyset$. In particular, over $B_\frac{R}{2}$
$$
\log \frac{|p_\lambda|^2}{r_\lambda} \leq C'' e^{-\epsilon'' \delta \frac{R^2}{4}} e^{\epsilon'' \delta |z|^2} 
$$
Thus over $B_{\frac{R}{4}}$, we have 
$$
\log \frac{|p_\lambda|^2}{r_\lambda} \leq C'' e^{-\epsilon'' \delta \frac{R^2}{4}} e^{\epsilon'' \delta |z|^2} \leq C'' e^{-\epsilon'' \delta \frac{3R^2}{16}}.
$$
The conclusion follows by taking $C_2=C''$ and $\epsilon_0=\epsilon'' \frac{3R^2}{16}$.
\end{proof}

\begin{prop}\label{Prop3.8}
There exists some constant $C_4$ and $\epsilon_2 >0$ so that over $B_{\frac{R}{8}}$
$$
\int_{B_{\frac{R}{8}}} |\partial_A p_\lambda|^2=\int_{B_{\frac{R}{8}}}|\dbar_A p_{\lambda}^*|^2 \leq C_4 e^{-\epsilon_2 \delta}.
$$
\end{prop}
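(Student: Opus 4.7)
The plan is to derive the $L^2$ bound on $\partial_A p_\lambda$ via a Bochner identity for $|p_\lambda|^2$, exploiting that $p_\lambda$ is holomorphic ($\dbar_A p_\lambda = 0$) and that the orthogonal decomposition gives $|p_\lambda|^2 = r_\lambda + |p_\lambda - \pi_\lambda|^2$, so all the non-constant information in $|p_\lambda|^2$ is already known to be exponentially small in $\delta$ by Proposition~\ref{Prop3.6}.

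The first step is to refine Lemma~\ref{Lemma3.5} into the pointwise identity (rather than inequality)
\begin{equation*}
-\Delta_{\dbar} |p_\lambda|^2 \;=\; 2|\partial_A p_\lambda|^2 \;+\; 2|\sigma|^2 \,|[M^*, p_\lambda]|^2,
\end{equation*}
in which both terms on the right are non-negative. This follows from the K\"ahler identity applied to the holomorphic section $p_\lambda$ of $\End(\Ecal)$, combined with the Vafa-Witten relation $\sqrt{-1}\Lambda_\omega F_A = \mu\id - [M, M^*]|\sigma|^2$; the residual curvature term $\langle [\sqrt{-1}\Lambda_\omega F_A, p_\lambda], p_\lambda\rangle$ becomes $-|\sigma|^2 |[M^*, p_\lambda]|^2$ after invoking $[M, p_\lambda] = 0$, the Jacobi identity $[[M, M^*], p_\lambda] = -[[M^*, p_\lambda], M]$, and the trace identity $\tr([A, B]C) = \tr(A[B, C])$ with $[M, p_\lambda^*] = -([M^*, p_\lambda])^*$.

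The second step is to test this identity against a smooth cutoff $\chi$ with $\chi \equiv 1$ on $B_{R/8}$ and $\mathrm{supp}\,\chi \subset B_{R/4}$. Dropping the non-negative $|[M^*, p_\lambda]|^2$ term and integrating by parts,
\begin{equation*}
2\int_{B_{R/8}} |\partial_A p_\lambda|^2 \;\leq\; -\int \chi^2 \, \Delta_{\dbar} |p_\lambda|^2 \;=\; -\int \big(|p_\lambda|^2 - r_\lambda\big)\, \Delta_{\dbar}(\chi^2) \;\lesssim\; \sup_{B_{R/4}} |p_\lambda - \pi_\lambda|^2,
\end{equation*}
where the constant $r_\lambda$ contributes nothing because $\chi$ has compact support. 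Proposition~\ref{Prop3.6} bounds the right-hand side by $C\, e^{-2\epsilon_0 \delta}$, giving the first claimed inequality with $\epsilon_2 = 2\epsilon_0$.

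The equality $\int |\partial_A p_\lambda|^2 = \int |\dbar_A p_\lambda^*|^2$ is immediate from the pointwise adjoint relation $(\dbar_A s)^* = \partial_A s^*$ on $\End(\Ecal)$: holomorphicity of $p_\lambda$ forces $\partial_A p_\lambda^* = 0$, and $\dbar_A p_\lambda^* = (\partial_A p_\lambda)^*$ has the same fiberwise norm as $\partial_A p_\lambda$. The only delicate point of the argument is the commutator bookkeeping in step one, which has to convert the curvature term into a manifestly non-negative quantity so that the desired inequality (not merely an identity) is preserved after discarding it; everything else is routine integration by parts combined with the exponential estimate of Proposition~\ref{Prop3.6}.
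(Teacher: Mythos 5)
Your argument is correct, and it reaches the estimate by a route that differs from the paper's in two respects, both worth noting. The paper applies the Bochner-type identity to $|p_\lambda-p_\lambda^*|^2$, whose $\Delta_{\dbar}$ produces $-|\partial_A p_\lambda|^2$ together with the curvature pairing $([\sqrt{-1}\Lambda_\omega F_A,p_\lambda],p_\lambda-p_\lambda^*)$, which is not sign-definite; after cutting off and integrating by parts, that term is controlled by combining the $C^0$ curvature bound of Corollary \ref{Cor3.5} (hence the standing assumption $S\lesssim \delta$) with the exponential smallness of $|p_\lambda-p_\lambda^*|$ coming from Proposition \ref{Prop3.6} and $\pi_\lambda=\pi_\lambda^*$, at the cost of a slightly degraded exponent (a factor polynomial in $\delta$ gets absorbed). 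You instead run the identity on $|p_\lambda|^2$ and use the Vafa--Witten equation together with $[M,p_\lambda]=0$ --- exactly the commutator computation already carried out in Lemma \ref{Lemma3.5} --- to recognize the curvature contribution as $-|\sigma|^2|[M^*,p_\lambda]|^2\leq 0$, so it can simply be discarded (the factor $2$ in your identity depends on the normalization of $\Delta_{\dbar}$, but only the sign matters); the remaining bookkeeping uses the orthogonality identity $|p_\lambda|^2-r_\lambda=|p_\lambda-\pi_\lambda|^2$, the vanishing of $\int r_\lambda\,\Delta_{\dbar}(\chi^2)$, and Proposition \ref{Prop3.6} directly. What your version buys is independence from Corollary \ref{Cor3.5} and from the absorption of the polynomial factor, yielding the cleaner rate $\epsilon_2=2\epsilon_0$; the paper's version only needs the size, not the sign, of the curvature term. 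Both proofs share the same skeleton --- a pointwise Weitzenb\"ock identity for the holomorphic projection, a cutoff integration by parts on $B_{\frac{R}{4}}$, and Proposition \ref{Prop3.6} as the sole source of smallness --- and your closing observation that $|\partial_A p_\lambda|=|\dbar_A p_\lambda^*|$ pointwise is the same as the paper's implicit justification of the stated equality of integrals.
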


\begin{proof}
A direct computation using $\dbar_{A} p_\lambda=0$ shows 
$$
\begin{aligned}
\Delta_{\dbar}|p_\lambda-p_\lambda^*|^2&=([\sqrt{-1} \Lambda_{\omega} F_A, p_\lambda], p_\lambda-p_\lambda^*)-|\partial_A p_\lambda|^2\\
\end{aligned}
$$
Take $\chi$ to be any cut-off function so that 
$$
\chi|_{B_{\frac{R}{8}}}=1, \chi|_{B^c_{\frac{R}{4}}}=0, |\nabla \chi| \leq \frac{C}{R}, |\nabla^2 \chi|\leq \frac{C}{R^2}.
$$
Multiplying the above by $\chi$ and doing integration, we obtain 
$$
\begin{aligned}
&\int_{B_{\frac{R}{8}}} |\partial_A p_\lambda|^2\\
\leq & |\int_{B_{\frac{R}{4}}} \chi \Delta_{\dbar}|p_\lambda-p_\lambda^*|^2| + |\int_{B_{\frac{R}{4}}} \chi([\sqrt{-1} \Lambda_{\omega} F_A, p_\lambda], p_\lambda-p_\lambda^*)|\\
\leq &\int_{B_{\frac{R}{4}}} |p_\lambda-p_\lambda^*|^2  |\Delta_{\dbar}\chi| +\int_{B_{\frac{R}{4}}} |[\sqrt{-1} \Lambda_{\omega} F_A, p_\lambda]| |p_\lambda-p_\lambda^*|\\
\end{aligned}
$$
Now using $\pi_\lambda=\pi_\lambda^*$, we obtain by Proposition \ref{Prop3.6} that
$$
|p_\lambda-p_\lambda^*| \leq |p_\lambda-\pi_\lambda|+|\pi_\lambda^*-p_\lambda^*| \leq 2C_2 e^{-\epsilon_0 \delta}
$$
and we also know from Corollary \ref{Cor3.5} that 
$$
|\sqrt{-1} \Lambda_{\omega} F_A|\leq C(1+S^2+\mu). 
$$
In particular, for suitable choice of $\epsilon_3$, we have 
$$
\int_{B_{\frac{R}{8}}} |\partial_A p_\lambda|^2 \leq C_4 e^{-\epsilon_3 \delta}.
$$
\end{proof}

\begin{cor}\label{Cor3.12}
There exists some constant $C_5$ and $\epsilon_3 >0$ so that over $B_{\frac{R}{8}}$
$$
\int_{B_{\frac{R}{8}}} |\dbar_A \pi_\lambda|^2=\int_{B_{\frac{R}{8}}}|\partial_A \pi_{\lambda}|^2 \leq C_5 e^{-\epsilon_3 \delta}.
$$
\end{cor}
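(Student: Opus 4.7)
The plan is to reduce the estimate to Proposition~\ref{Prop3.8} through a purely algebraic pointwise comparison $|\dbar_A\pi_\lambda| \lesssim |\dbar_A p_\lambda^*|$. The equality $\int|\dbar_A\pi_\lambda|^2 = \int|\partial_A\pi_\lambda|^2$ is immediate: the pointwise adjoint of an $\End(\Ecal)$-valued $1$-form exchanges its $(1,0)$ and $(0,1)$ components while preserving the pointwise norm, so $\pi_\lambda = \pi_\lambda^*$ together with unitarity of $A$ yields $\partial_A\pi_\lambda = (\dbar_A\pi_\lambda)^*$.

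For the main bound, I would exploit two projection identities. Because $\pi_\lambda$ is the orthogonal projection onto the image of the holomorphic projection $p_\lambda$, and $p_\lambda$ restricts to the identity on that image, one has $p_\lambda\pi_\lambda = \pi_\lambda$ and $\pi_\lambda p_\lambda = p_\lambda$; the adjoint of the first also yields $\pi_\lambda p_\lambda^* = \pi_\lambda$. Applying $\dbar_A$ to $\pi_\lambda p_\lambda = p_\lambda$ and using $\dbar_A p_\lambda = 0$ gives $(\dbar_A\pi_\lambda) p_\lambda = 0$; applying $\dbar_A$ to $\pi_\lambda p_\lambda^* = \pi_\lambda$ gives the key relation $(\dbar_A\pi_\lambda)(1 - p_\lambda^*) = \pi_\lambda\,\dbar_A p_\lambda^*$. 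Combining the two identities produces the pointwise formula
\[
\dbar_A\pi_\lambda \,=\, \pi_\lambda\,\dbar_A p_\lambda^* \,+\, (\dbar_A\pi_\lambda)(p_\lambda^* - p_\lambda).
\]

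By Proposition~\ref{Prop3.6}, applied to $p_\lambda - \pi_\lambda$ and (by taking the adjoint, using $\pi_\lambda^* = \pi_\lambda$) to $p_\lambda^* - \pi_\lambda$, we obtain $|p_\lambda^* - p_\lambda| \leq 2 C_2 e^{-\epsilon_0\delta}$ on $B_{R/4}$. Since $|\pi_\lambda|^2 = r_\lambda$ is bounded, the displayed identity yields
\[
|\dbar_A\pi_\lambda| \,\leq\, \sqrt{r_\lambda}\,|\dbar_A p_\lambda^*| \,+\, 2C_2 e^{-\epsilon_0\delta}\,|\dbar_A\pi_\lambda|.
\]
Choosing $\delta$ above a threshold $\delta_0$ with $2C_2 e^{-\epsilon_0\delta_0} \leq 1/2$, the second term absorbs into the left-hand side, giving the pointwise comparison $|\dbar_A\pi_\lambda| \leq 2\sqrt{r_\lambda}\,|\dbar_A p_\lambda^*|$. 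Squaring and integrating over $B_{R/8}$ then reduces the estimate directly to Proposition~\ref{Prop3.8}. For the remaining range $\delta \in [1,\delta_0]$, the target exponential bound is bounded below by a positive constant, so the claimed inequality holds after enlarging $C_5$. The sole conceptual hurdle is recognizing that one must compare $\dbar_A\pi_\lambda$ with $\dbar_A p_\lambda^*$ rather than with the vanishing $\dbar_A p_\lambda$; once this pairing is identified, the argument is a short algebraic manipulation with no further analysis required.
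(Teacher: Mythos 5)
Your algebraic identity $\dbar_A\pi_\lambda=\pi_\lambda\,\dbar_A p_\lambda^{*}+(\dbar_A\pi_\lambda)(p_\lambda^{*}-p_\lambda)$ is correct, and together with Proposition \ref{Prop3.6} and Proposition \ref{Prop3.8} it does prove the estimate in the regime $\delta\geq\delta_0$, where $\delta_0$ is your absorption threshold. This is a genuinely different route from the paper's: there the comparison is obtained pointwise, with constant one and for every $\delta$, from the observation that $-\dbar_A p_\lambda^{*}=\dbar_A(p_\lambda-\pi_\lambda)+\dbar_A(\pi_\lambda-p_\lambda^{*})$ is an orthogonal decomposition. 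Orthogonality holds because $\dbar_A p_\lambda=0$ forces $\dbar_A\pi_\lambda=-\dbar_A(p_\lambda-\pi_\lambda)$ to take values in the block $\Hom(\Ecal_\lambda^{\perp},\Ecal_\lambda)$ (differentiate $\pi_\lambda^{2}=\pi_\lambda$ and $(p_\lambda-\pi_\lambda)\pi_\lambda=0$), while $\dbar_A(\pi_\lambda-p_\lambda^{*})=-\dbar_A\bigl((p_\lambda-\pi_\lambda)^{*}\bigr)$ has vanishing component in that block; hence $|\dbar_A\pi_\lambda|\leq|\dbar_A p_\lambda^{*}|$ everywhere, with no threshold and no absorption.

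The gap in your argument is the range $\delta\in[1,\delta_0]$. Saying the inequality ``holds after enlarging $C_5$'' presupposes a bound on $\int_{B_{R/8}}|\dbar_A\pi_\lambda|^{2}$ that is uniform over all solutions satisfying the standing Assumption with $\delta$ in that range, since the constants in this section must be independent of the pair $(A,\phi)$. No such a priori bound is available: Lemma \ref{C0bound} and Proposition \ref{Prop3.6} control $\pi_\lambda$ and $p_\lambda$ in $C^{0}$ but not their covariant derivatives (which encode the second fundamental form of $\Ecal_\lambda$), and your own identity cannot supply it when the coefficient $2C_2e^{-\epsilon_0\delta}$ is not below one. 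So as written your proof establishes the corollary only for large $\delta$; that would suffice for the application in Proposition \ref{Prop3.13}, where $\delta^{i}\to\infty$, but not for the statement as given. The cleanest fix is the pointwise orthogonality argument above, which is uniform in $\delta$.
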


\begin{proof}
By Proposition \ref{Prop3.8}, it suffices to show 
$$
|\dbar_A \pi_\lambda|\leq |\dbar_A p_{\lambda}^*|.
$$
Indeed, we obtain
$$
\dbar_A \pi_\lambda=\dbar_A(\pi_\lambda-p_\lambda).
$$
Combined with the orthogonal decomposition
$$
-\dbar_A(p_\lambda^*)=\dbar_A(p_\lambda-p_\lambda^*)=\dbar_A(p_\lambda-\pi_\lambda)+\dbar_A(\pi_\lambda-p_\lambda^*),
$$
this implies $|\dbar_A \pi_\lambda|\leq |\dbar_A p_{\lambda}^*|$.
\end{proof}

\subsection{Applied to a sequence when the spectral covers become unbounded}
Let $\{(A_i, \phi_i)\}_{i\in \NBbb}$ be a sequence of solutions to the Vafa-Witten equations over $B_{2R}$ satisfying 
\begin{itemize}
    \item $b_k(\rho_i) \rightarrow b_k^\infty$ where $\rho_i = r_i^{-1} \phi_i$ and $r_i \rightarrow \infty$ as $i\rightarrow 0$;
    \item $\Delta(b^\infty)=\emptyset$ where $b^\infty=(b_1^\infty, \cdots b^\infty_{\rank E})$.
\end{itemize}
In particular, for $i$ large, $\phi_i$ are regular and semi-simple over $\overline{B_R}$ and we can decompose 
$$
(E, \dbar_{A_i})=\oplus_k \Ecal^i_{\lambda_k}
$$
where $\Ecal_{\lambda_k}$ has rank one and by writing $\phi_i=M^i dz_1 \cdots dz_n$ locally $M^i$ acts on $\Ecal_{\lambda_k}$ by scaling with $\lambda_k$. Denote $\pi^i_k: (E, \dbar_{A_i}) \rightarrow \Ecal^i_{\lambda_k}$ and $p^i_k: (E, \dbar_{A_i}) \rightarrow \Ecal^i_{\lambda_k}$ as the holomorphic and orthogonal projections respectively.
\begin{prop}\label{Prop3.13}
The following holds 
\begin{itemize}
    \item $\lim_i \sup_{B_{\frac{R}{8}}} |p^i_k-\pi^i_k| \rightarrow 0$;
    \item $\lim_i \int_{B_{\frac{R}{8}}} |\nabla_{A_i} \pi^i_k|^2 \rightarrow 0$;
    \item $\lim_i \int_{B_{\frac{R}{8}}} |\nabla_{A_i} p^i_k|^2 \rightarrow 0$.
\end{itemize}
\end{prop}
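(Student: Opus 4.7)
The plan is to reduce Proposition 3.13 to direct applications of Proposition \ref{Prop3.6}, Proposition \ref{Prop3.8}, and Corollary \ref{Cor3.12}, after first verifying that the hypotheses of Section 4.2 (namely $\delta \geq 1$ and $S \leq C\delta$) are met along the sequence for all sufficiently large $i$. The key quantitative input is that $\delta^i := \inf_{k \neq k'} \inf_{B_R} |\lambda_k^i - \lambda_{k'}^i|$ grows like $r_i$, so the exponential decay $e^{-\epsilon \delta^i}$ in the previous subsection's estimates forces each of the three quantities to zero.

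First I would verify the eigenvalue separation and upper bound. Writing $\phi_i = M^i\, dz_1 \wedge \cdots \wedge dz_n$, the eigenvalues of $r_i^{-1} M^i$ are the roots of $t^r + b_1(\rho_i) t^{r-1} + \cdots + b_r(\rho_i)$, which converge uniformly on $\overline{B_R}$ to the roots of the limit polynomial with coefficients $b^\infty$. Since $\Delta(b^\infty) = \emptyset$ on $\overline{B_R}$, the limiting roots are pairwise disjoint with uniform separation $c_0 > 0$, and uniformly bounded by some $C_0 > 0$. By continuity of roots and compactness of $\overline{B_R}$, for $i$ large enough we have
$$
|\lambda_k^i - \lambda_{k'}^i| \geq \tfrac{c_0}{2}\, r_i \quad (k \neq k'), \qquad |\lambda_k^i| \leq 2 C_0\, r_i.
$$
Hence $\delta^i \geq \tfrac{c_0}{2} r_i$ and $S^i \leq 2 C_0 r_i$, so $\delta^i \to \infty$ and $S^i \leq (4 C_0/c_0)\,\delta^i$. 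Thus the standing assumption of Section 4.2 is satisfied for $i$ large.

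With the hypothesis in place, each of the three bullets follows mechanically. For the first, Proposition \ref{Prop3.6} applied to $(A_i, \phi_i)$ gives $\sup_{B_{R/4}} |p_k^i - \pi_k^i| \leq C_2 e^{-\epsilon_0 \delta^i}$, and since $\delta^i \to \infty$ the right-hand side tends to zero, giving the $C^0$ bound on the smaller ball $B_{R/8}$. For the third bullet, recall that $p_k^i$ is the \emph{holomorphic} projection, so $\bar\partial_{A_i} p_k^i = 0$ and therefore $|\nabla_{A_i} p_k^i|^2 = |\partial_{A_i} p_k^i|^2$; Proposition \ref{Prop3.8} then yields
$$
\int_{B_{R/8}} |\nabla_{A_i} p_k^i|^2 \;=\; \int_{B_{R/8}} |\partial_{A_i} p_k^i|^2 \;\leq\; C_4 e^{-\epsilon_2 \delta^i} \longrightarrow 0.
$$

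For the second bullet, $\pi_k^i$ is unitary self-adjoint, so $\nabla_{A_i} \pi_k^i = \partial_{A_i} \pi_k^i + \bar\partial_{A_i} \pi_k^i$ and Corollary \ref{Cor3.12} gives $\int_{B_{R/8}} |\partial_{A_i} \pi_k^i|^2 = \int_{B_{R/8}} |\bar\partial_{A_i} \pi_k^i|^2 \leq C_5 e^{-\epsilon_3 \delta^i}$, so
$$
\int_{B_{R/8}} |\nabla_{A_i} \pi_k^i|^2 \;\leq\; 2 C_5 e^{-\epsilon_3 \delta^i} \longrightarrow 0,
$$
completing the proof. The only non-routine point is the eigenvalue separation step; once that is established the previous estimates are applied as black boxes, so I do not expect any genuine obstacle.
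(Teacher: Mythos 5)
Your proposal is correct and follows essentially the same route as the paper: verify that $\delta^i \to \infty$ and $S^i \lesssim \delta^i$ from the convergence of the spectra of $r_i^{-1}\phi_i$ to the pairwise-distinct roots determined by $b^\infty$ (using $\Delta(b^\infty)=\emptyset$), then apply Proposition \ref{Prop3.6}, Proposition \ref{Prop3.8} and Corollary \ref{Cor3.12} to the sequence. Your quantitative separation estimate $\delta^i \gtrsim r_i$ and the explicit splitting of $\nabla_{A_i}$ into $\partial_{A_i}$ and $\dbar_{A_i}$ parts only make more explicit what the paper treats as immediate.
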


\begin{proof}
    This directly follows from applying the estimates above to $\pi^i_k$ and $p^i_k$. Denote 
    $$
    \delta^i=\max_{k\neq k'} \sup_ {\overline{B_R}}|\lambda^i_k - \lambda^i_{k'}|
    $$
    and 
    $$
    S^i=\max_{k\neq k'} \sup_{\overline{B_R}} |\lambda^i_k|.
    $$
    Since $b_k(r_i^{-1} \phi_i^k) \rightarrow b_k^\infty$ for each $k$ and $\Delta(b^\infty)=\emptyset$ over $B_{2R}$, we obtain the spectrum of $r_i^{-1}\phi_i$ also converges and we denote  $r_i^{-1}\lambda_k^i \rightarrow \lambda_k^\infty$. Then for any $k\neq k'$   
    $$
    \lambda_k^\infty \neq \lambda_{k'}^\infty
    $$
    thus one can easily see that 
    $$
    \delta^i \rightarrow \infty 
    $$
    as $i\rightarrow \infty$ and 
    $$
    S^i \leq C\delta^i
    $$
    for some $C$ independent of $i$. Now the conclusion follows from Proposition \ref{Prop3.6}, Proposition \ref{Prop3.8} and Corollary \ref{Cor3.12} applied to the sequence.
\end{proof}

\section{Uhlenbeck compactness for solutions with uniformly bounded spectral covers} \label{Nilpotentcase} 
In the following, we will always consider a sequence of solutions $\{(A_i, \phi_i)\}_{i\in \NBbb}$ to the Vafa-Witten equation on a fixed unitary bundle $(E, H)$ over a compact K\"ahler manifold. We first note the following 
\begin{prop}\label{prop4.1}
Assume $\phi_i$ are all nilpotent. There exists a dimensional constant $C$ so that for any nilpotent solution $(A,\phi)$ to the Vafa-Witten equation
$$
\|\phi\|_{C^0}\leq C.
$$ 
\end{prop}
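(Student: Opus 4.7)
The plan is to reduce the global $C^0$ bound to the local interior estimate of Proposition \ref{Prop4.1} applied at the scale of a fixed coordinate cover of $X$. Fix a finite cover of $X$ by coordinate balls $\{U_\alpha\}$ with $U_\alpha \cong B_{R_\alpha}$ in $\CBbb^n$, chosen small enough that on each $U_\alpha$ the K\"ahler metric $\omega$ is comparable to the flat Euclidean metric up to a uniform constant (so that all the local estimates in Section \ref{Estimates}, which are stated for the flat model, transfer to our setting with a constant depending only on the fixed background geometry), and so that the smaller concentric balls $B_{R_\alpha/2}$ already cover $X$. In each chart write $\phi = M_\alpha \, dz_1\wedge\cdots\wedge dz_n$.

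Next, I would use the hypothesis that $\phi$ is nilpotent. Nilpotency means that every eigenvalue of the endomorphism $M_\alpha$ is zero at every point of $U_\alpha$, so in the notation of Proposition \ref{Prop4.1} we may take $S = 0$ on $U_\alpha$. Applying Proposition \ref{Prop4.1} directly then yields
\[
\|M_\alpha\|_{C^0(B_{R_\alpha/2})} \;\leq\; C_0,
\]
where $C_0$ depends only on $R_\alpha$ and on the background geometry of $(X,\omega)$ in the chart $U_\alpha$, not on the particular solution $(A,\phi)$.

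Finally, I would pass from the pointwise bound on the matrix part $M_\alpha$ to a pointwise bound on $\phi$ itself. Since $|\phi| = |M_\alpha|\,|dz_1\wedge\cdots\wedge dz_n|$ and the norm of the coordinate $n$-form with respect to either $\omega$ or the auxiliary Hermitian-Yang-Mills metric $H_{K_X}$ is uniformly bounded on the compact closure $\overline{U_\alpha}$, we obtain $\|\phi\|_{C^0(B_{R_\alpha/2})} \leq C_\alpha$. Taking the maximum of the finitely many constants $C_\alpha$ over the cover yields the desired global bound $\|\phi\|_{C^0} \leq C$.

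The only mild subtlety, and hence the main thing to check carefully, is that Proposition \ref{Prop4.1} is stated on a flat ball while our charts carry a general K\"ahler metric. As noted in Section \ref{Estimates}, the estimate adapts to bounded perturbations of the flat metric, with the constants $C$, $C_0$ depending only on uniform bounds for $\omega$ and its derivatives on the fixed cover; one inspects that the only ingredients used in the proof of Proposition \ref{Prop4.1} are the Weitzenb\"ock-type inequality $\Delta_{\bar\partial}\log|M|^2 \leq -|[M,M^*]|^2/|M|^2$, the linear-algebra estimate in Lemma \ref{MatrixNorm} relating $|[M,M^*]|$ and the eigenvalues, and a maximum-principle comparison with $(R^2-|z|^2)^{-2}$, all of which survive a bounded conformal/metric change on the fixed coordinate patches. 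With this verified, the argument gives a uniform $C^0$ bound on the entire family of nilpotent solutions.
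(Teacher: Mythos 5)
Your proof is correct, but it is not the route the paper takes for this proposition. The paper's own argument is global and shorter: by the Weitzenb\"ock formula (Proposition \ref{WeinzenbockFormula}) one has $\Delta_{\dbar}|\phi|^2 \leq -|[\phi;\phi]|^2+2\pi\deg(K_X)|\phi|^2$, and since nilpotency kills all the invariants $b_k(\phi)$, Lemma \ref{MatrixNorm} gives $|\phi|^4 \lesssim |[\phi;\phi]|^2$; evaluating at a maximum point of $|\phi|$ then yields $\|\phi\|_{C^0}\lesssim \sqrt{2\pi\deg K_X}$ with no covering argument and no chart-by-chart metric comparison. Your route --- setting $S=0$ in the local interior estimate of Proposition \ref{Prop4.1} on a finite coordinate cover and patching --- is exactly the alternative the paper itself flags immediately after its proof (``Actually, this also follows from the $C^0$ estimate we derived in Proposition \ref{Prop4.1}''), and it is sound provided the constants are carried through the flat-to-K\"ahler comparison, which you correctly single out as the one point needing care (the constants in Section \ref{Estimates} are stated to depend only on the controlled background geometry, not on $(A,\phi)$, so this works). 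As for what each approach buys: the paper's maximum-principle argument is more elementary given the Weitzenb\"ock inequality and produces an explicit constant in terms of $\deg K_X$, whereas your local argument, though heavier in bookkeeping, generalizes verbatim to solutions whose spectral covers are merely uniformly bounded (replace $S=0$ by the uniform eigenvalue bound), which is precisely the subsequent lemma in Section \ref{Nilpotentcase} that the Uhlenbeck compactness theorem actually uses.
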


\begin{proof}
By Proposition \ref{WeinzenbockFormula}, we obtain 
$$
\Delta_{\dbar} |\phi|^2 \leq  -|[\phi;\phi]|^2+2\pi\deg(K_X) |\phi|^2.
$$
Since $\phi$ is nilpotent, i.e., $b_k(\phi)\equiv 0$ for any $k$, by Lemma \ref{MatrixNorm}, we obtain 
$$
|\phi|^4 \lesssim |[\phi;\phi]|^2
$$
thus combined with the above, this implies 
$$
\Delta_{\dbar} |\phi|^2 +|\phi|^4 \lesssim 2\pi\deg(K_X) |\phi|^2.
$$
Suppose $|\phi|$ achieves its maximum at $x_m$, then 
$$
|\phi|^4(x_m) \lesssim 2\pi\deg(K_X) |\phi|^2(x_m).
$$
which implies 
$$
|\phi|(x_m) \lesssim \sqrt{2\pi \deg K_X}.
$$
\end{proof}
Actually, this also follows from the $C^0$ estimate we derived in Proposition \ref{Prop4.1} which gives the following 
\begin{lem}
    Assume the spectral covers of $X_{b(\phi_i)}$ are uniformly bounded, then the Higgs fields $\phi_i$ are uniformly bounded. 
\end{lem}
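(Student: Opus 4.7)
The plan is to reduce the statement to a local computation and then invoke Proposition \ref{Prop4.1}, which already gives a local $C^0$ bound on the matrix part of the Higgs field in terms of the upper bound on its eigenvalues. Concretely, I would cover $X$ by finitely many coordinate charts $\{U_\alpha\}$ on which $K_X$ admits a nowhere vanishing holomorphic frame $\sigma_\alpha$ (for instance $\sigma_\alpha = dz_1^\alpha \wedge \cdots \wedge dz_n^\alpha$) and choose inner balls $B_\alpha \Subset U_\alpha$ still covering $X$. On each $U_\alpha$ we write $\phi_i = M_\alpha^i \,\sigma_\alpha$ with $M_\alpha^i \in \End(E)$.

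The next step is to translate the hypothesis "the spectral covers $X_{b(\phi_i)}$ are uniformly bounded'' into a uniform pointwise bound on the eigenvalues of $M_\alpha^i$. Indeed, the eigenvalues of $\phi_i$ at $x$ are the points of $\pi^{-1}(x)\cap X_{b(\phi_i)} \subset (K_X)_x$, and if $\lambda_{i,j}$ are the eigenvalues of $M_\alpha^i$ then these points are precisely $\lambda_{i,j}\sigma_\alpha(x)$. Fixing a smooth Hermitian metric on $K_X$, the uniform boundedness of $X_{b(\phi_i)}$ as subsets of $K_X$ means $|\lambda_{i,j}\sigma_\alpha(x)|_{K_X}\le S_0$ uniformly in $i,j$ and $x\in U_\alpha$; since $|\sigma_\alpha|_{K_X}$ is bounded away from zero on $\overline{U_\alpha}$, we conclude that there is a constant $S=S(S_0,\alpha)$ with $|\lambda_{i,j}(x)|\le S$ for all $x\in U_\alpha$ and all $i,j$.

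Now Proposition \ref{Prop4.1} applies directly on each $U_\alpha$ (taking $R$ so that $B_\alpha\subset B_{R/2}\subset B_R\subset U_\alpha$, after a routine change of metric from the flat model to the restriction of $\omega$, which only affects constants): we obtain $\|M_\alpha^i\|_{C^0(B_\alpha)}\le C_\alpha(S+1)$ uniformly in $i$, and therefore $\|\phi_i\|_{C^0(B_\alpha)}\le C_\alpha(S+1)\sup_{B_\alpha}|\sigma_\alpha|_{K_X}$. Taking the maximum over the finite cover $\{B_\alpha\}$ yields the desired uniform $C^0$ bound on $\phi_i$.

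The only step requiring minor care is the adaptation of Proposition \ref{Prop4.1}, stated on a Euclidean ball with the flat metric, to an arbitrary coordinate chart with the ambient K\"ahler metric $\omega$; this is harmless because on a fixed compact chart the two metrics are uniformly equivalent, and the curvature identities and maximum principle used in the proof of Proposition \ref{Prop4.1} are unchanged up to absorbing constants into $C_0$. No genuinely new analytic input is needed, and the nilpotent case of Proposition \ref{prop4.1} is recovered as the special case $S=0$.
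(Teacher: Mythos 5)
Your argument is correct and is exactly the route the paper takes: the lemma is stated there as an immediate consequence of Proposition \ref{Prop4.1}, with the localization to charts, the translation of boundedness of $X_{b(\phi_i)}\subset K_X$ into uniform eigenvalue bounds for the local matrices $M_\alpha^i$, and the adaptation from the flat model metric left implicit (the paper itself notes the estimates adapt easily to general K\"ahler metrics). Your write-up simply fills in these routine details, so there is nothing to add.
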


In particular, this gives
\begin{cor}
 Assume the spectral covers of $X_{b(\phi_i)}$ are uniformly bounded, then $\|F_{A_i}\|_{L^2} \leq C$ and $\|\Lambda_\omega F_{A_i}\|_{C^0} \leq C$.
\end{cor}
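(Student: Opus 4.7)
The plan is to reduce both bounds to the $C^0$ control on the Higgs fields that is already available. By the lemma immediately preceding the statement, the uniform boundedness of the spectral covers $X_{b(\phi_i)}$ translates to the uniform boundedness of the eigenvalues of $M_i$ (where locally $\phi_i = M_i\,dz_1\wedge\cdots\wedge dz_n$), and hence by Proposition \ref{Prop4.1} to a uniform $C^0$ bound
\[
\|\phi_i\|_{C^0}\le C.
\]
From this, since $[\phi;\phi]$ is quadratic in $\phi$, we immediately obtain $\|[\phi_i;\phi_i]\|_{C^0}\le C$ and in particular $\|[\phi_i;\phi_i]\|_{L^2}\le C$.

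For the $C^0$ bound on $\Lambda_\omega F_{A_i}$, I would simply invoke equation (2) of the Vafa-Witten system, which reads
\[
\sqrt{-1}\Lambda_\omega F_{A_i}=\mu\,\id-[\phi_i;\phi_i],
\]
so that the pointwise estimate on $[\phi_i;\phi_i]$ just established yields $\|\Lambda_\omega F_{A_i}\|_{C^0}\le C$. This is exactly the content of Corollary \ref{Cor3.5} applied in the present setting, with $S$ taken to be the uniform bound on the eigenvalues coming from the boundedness of the spectral covers.

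For the $L^2$ bound on the full curvature, I would then apply the earlier lemma
\[
\|F_{A_i}\|_{L^2}\le C\bigl(\|[\phi_i;\phi_i]\|_{L^2}+\mu\bigr)
\]
(proved via the Hodge–Riemann bilinear relations and the topological invariance of $\int \tr(F\wedge F)\wedge \omega^{n-2}$), which combined with the $L^2$ bound on $[\phi_i;\phi_i]$ above gives $\|F_{A_i}\|_{L^2}\le C$.

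There is no real obstacle here: once the $C^0$ bound on the Higgs fields has been secured from the spectral cover hypothesis, both claimed estimates follow directly from structural consequences of the Vafa-Witten equation that have been established earlier in the paper. The substantive work sits entirely upstream, in Proposition \ref{Prop4.1} and in the Weitzenböck/Hodge-Riemann lemma relating $\|F_A\|_{L^2}$ to $\|[\phi;\phi]\|_{L^2}$.
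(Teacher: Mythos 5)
Your proposal is correct and follows exactly the route the paper intends: the preceding lemma (via Proposition \ref{Prop4.1}) gives the uniform $C^0$ bound on $\phi_i$, the pointwise bound on $\Lambda_\omega F_{A_i}$ then comes from equation (2) as in Corollary \ref{Cor3.5}, and the $L^2$ curvature bound comes from the earlier lemma relating $\|F_A\|_{L^2}$ to $\|[\phi;\phi]\|_{L^2}+\mu$. Nothing is missing.
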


Given the estimates above, by Uhlenbeck's compactness results (\cite{UhlenbeckPreprint} \cite[Theorem $5.2$]{UhlenbeckYau:86}), we obtain 
\begin{prop}
Given any sequence of solutions $(A_i, \phi_i)$ to the Vafa-Witten equation with uniformly bounded spectral covers, passing to a subsequence, up to gauge transforms, $(A_i, \phi_i)$ converges locally smoothly to $(A_\infty, \phi_\infty)$ over $X\setminus Z$ where 
$$
Z=\{x\in X: \lim_{r\rightarrow 0} \liminf_{i}\int_{B_x(r)} |F_{A_i}|^2 \dvol \geq \epsilon_0  \}
$$
is a closed subset of $X$ of Hausdorff codimension at least four. Here $\epsilon_0$ denotes the regularity constant\footnote{If the quantity above is smaller than $\epsilon_0$, then by fixing gauge, one can get higher order estimates for the connections. }.
\end{prop}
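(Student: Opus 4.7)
The plan is to reduce the statement to the standard Uhlenbeck compactness machinery, now that the previous lemma and corollary supply the three uniform bounds we need: $\|\phi_i\|_{C^0}\lesssim 1$, $\|\Lambda_\omega F_{A_i}\|_{C^0}\lesssim 1$ (from Corollary \ref{Cor3.5}), and $\|F_{A_i}\|_{L^2}\lesssim 1$. First I would fix a point $x\in X\setminus Z$; by the very definition of $Z$, after passing to a subsequence there exists a radius $r_x>0$ such that
$$
\int_{B_x(r_x)}|F_{A_i}|^2\,\dvol < \epsilon_0 \quad \text{for every } i,
$$
where $\epsilon_0$ is the Uhlenbeck/regularity constant. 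On each such ball Uhlenbeck's gauge fixing theorem supplies local gauges $u_i$ in which $d^\ast(u_i^\ast A_i)=0$ and $\|u_i^\ast A_i\|_{W^{1,2}}$ is controlled by $\|F_{A_i}\|_{L^2}$.

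Next I would bootstrap the regularity of the pair $(u_i^\ast A_i,u_i^\ast\phi_i)$ inside these Coulomb gauges. The Vafa-Witten system, together with the Coulomb condition, is elliptic in a suitable sense: equation (2) expresses $\Lambda_\omega F_{A_i}$ algebraically in terms of $[\phi_i;\phi_i]$, which is bounded in $C^0$ by Proposition \ref{prop4.1}; equation (1), $F_{A_i}^{0,2}=0$, combined with Coulomb gauge controls the remaining curvature components; and equation (3), $\dbar_{A_i}\phi_i=0$, gives, via the K\"ahler identities and the $C^0$ bound on $\phi_i$, elliptic control on $\phi_i$. A standard $L^p$/Schauder iteration then upgrades $W^{1,2}$ to $W^{2,p}$ for every finite $p$, and inductively to $C^{k,\alpha}$ on slightly smaller balls for every $k$, uniformly in $i$; the same bootstrap, run in parallel for $\phi_i$, produces uniform $C^k_{\mathrm{loc}}$ bounds on the Higgs fields.

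A diagonal subsequence argument over a countable exhaustion of $X\setminus Z$ by compact sets, together with patching the local Coulomb gauges by unitary transition isomorphisms (using that $\phi_i$ transforms tensorially), then yields a smooth limit $(A_\infty,\phi_\infty)$ on all of $X\setminus Z$. That $Z$ is closed is immediate from its definition as an $\epsilon_0$-concentration set. That $Z$ has Hausdorff codimension at least four follows from the standard energy-quantization and Vitali covering argument, carried out verbatim as in \cite[Theorem 4.3.3]{Tian:00} and \cite[Theorem 5.2]{UhlenbeckYau:86}; the only place where the coupled nature of the problem enters is to keep track of $\phi_i$, but the uniform $C^0$ bound makes the $[\phi_i;\phi_i]$ contribution to the Weitzenb\"ock inequality for $|F_{A_i}|^2$ benign, so the covering estimate proceeds exactly as in the pure Hermitian-Yang-Mills case.

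The only genuinely Vafa-Witten-specific step is the bootstrap in Coulomb gauge: one must verify that the elliptic system for $(A_i,\phi_i)$ closes up with coefficients bounded uniformly in $i$. I expect this to be the main obstacle, but it is made routine here precisely by the a priori $C^0$ control on $\phi_i$ coming from the uniform spectral bound; without it the nonlinear commutator terms in (2) and (3) would be unbounded and the bootstrap would break down.
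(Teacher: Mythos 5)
Your proposal is correct and follows essentially the same route as the paper: the uniform $C^0$ bound on $\phi_i$ (hence on $\Lambda_\omega F_{A_i}$ and $\|F_{A_i}\|_{L^2}$) feeds into Uhlenbeck's small-energy gauge fixing and compactness for connections with bounded Hermitian--Einstein tensor, with the $L^p_1$ convergence upgraded to $C^\infty_{loc}$ by elliptic bootstrapping of the gauge-fixed Vafa--Witten system, and the codimension-four bound on $Z$ obtained by the standard covering argument as in Uhlenbeck--Yau and Tian. The paper simply cites these references rather than spelling out the bootstrap and patching, but the substance is the same.
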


\begin{rmk}
In \cite{UhlenbeckPreprint} and \cite{UhlenbeckYau:86}, the convergence is in $L^p_{1, loc}$ for general admissible connections for any fixed $p>1$. Since in our case the Vafa-Witten equations are elliptic after fixing gauge, the convergence can be improved to be $C^\infty_{loc}$ by standard bootstrapping arguments similar as the Yang-Mills case (\cite{Uhlenbeck:82a} \cite{Nakajima:88}). 
\end{rmk}

To finish the proof of Corollary \ref{NilpotentCompactness}, we need to show that the bubbing set is complex analytic which follows from similar argument as \cite{Tian:00} \cite{HongTian:04} and we include a sketched explanation.
\begin{prop}
$Z$ is a codimension at least two subvariety of $X$. Furthermore, $A_\infty$ defines a unique reflexive sheaf $\Ecal_\infty$ over $X$ and $\phi_\infty$ extends to be a global section of $Hom(\Ecal_\infty, \Ecal_\infty) \otimes K_X$ over $X$;  $Z$ is a codimension at least two subvariety that admits a decomposition 
$$
Z=\cup_k Z_k \cup \Sing(\Ecal_\infty)
$$
where $Z_k$ denotes the irreducible pure codimension two components of $Z
$. As a sequence of currents 
$$
\tr(F_{A_i} \wedge F_{A_i}) \rightarrow \tr(F_{A_\infty} \wedge F_{A_\infty})+ 8\pi^2 \sum m_k Z_k
$$
where $m_k \in \ZBbb_+$
\end{prop}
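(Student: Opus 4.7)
The plan is to adapt the Tian/Hong-Tian theory of Hermitian-Yang-Mills connections \cite{Tian:00,HongTian:04} to the Vafa-Witten setting. The crucial point is that under a uniform bound on the spectral covers, Proposition \ref{Prop4.1} gives a uniform $C^0$ bound on $\phi_i$, hence on $[\phi_i;\phi_i]$, which together with the Vafa-Witten equation gives a uniform $C^0$ bound on $\sqrt{-1}\Lambda_\omega F_{A_i}$. In particular, away from the bubbling locus the equation behaves like a bounded $C^0$ perturbation of the Hermitian-Einstein equation, and the Higgs term only enters as a lower order correction in the rescaling arguments.

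First I would extend the limiting data across $Z$. The previous proposition furnishes smooth local convergence of $(A_i,\phi_i)$ to $(A_\infty,\phi_\infty)$ on $X\setminus Z$ up to gauge. Since $\phi_\infty$ is the smooth limit of uniformly $C^0$-bounded holomorphic sections and $Z$ has real Hausdorff codimension $\geq 4$, a Hartogs-type extension gives $\phi_\infty\in H^0(X,\Hom(\Ecal_\infty,\Ecal_\infty)\otimes K_X)$ once $\Ecal_\infty$ is produced. For the bundle, $(E|_{X\setminus Z},\dbar_{A_\infty})$ carries an admissible Hermitian-Einstein metric (with $C^0$-bounded $\Lambda_\omega F$ and $L^2$ curvature) because the $[\phi_\infty;\phi_\infty]$ correction is $C^0$-bounded, so the Bando--Siu extension theorem \cite{BandoSiu:94} produces the unique reflexive coherent sheaf $\Ecal_\infty$ on $X$.

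Next, to identify $Z$ as an analytic subvariety, I would study the defect measure. By passing to a subsequence, the nonnegative Radon measures $|F_{A_i}|^2\dvol$ converge weakly to $|F_{A_\infty}|^2\dvol+\mu_\infty$ for a positive defect measure $\mu_\infty$ supported on $Z$. Monotonicity of the (bounded-perturbed) Yang-Mills functional, combined with the $\epsilon$-regularity inherent in the definition of $Z$, gives a uniform lower density bound for $\mu_\infty$, forcing Hausdorff codimension exactly $4$ on $Z\setminus\Sing(\Ecal_\infty)$. Now consider the sequence of closed $(2n-4)$-currents $\tr(F_{A_i}\wedge F_{A_i})$, all representing the fixed topological class $-8\pi^2\ch_2(E)$. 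Their weak limit equals $\tr(F_{A_\infty}\wedge F_{A_\infty})$ plus a closed current $T$ supported on the rectifiable set $Z\setminus\Sing(\Ecal_\infty)$; by Tian's rectifiability result \cite[Thm.\ 4.3.3]{Tian:00} and the King--Harvey--Shiffman structure theorem, such a $T$ is of the form $8\pi^2\sum m_k[Z_k]$ with $m_k\in\ZBbb_+$ and $Z_k$ irreducible codimension-$2$ analytic subvarieties. This gives simultaneously the analyticity of the codimension-two part of $Z$, the decomposition $Z=\cup_k Z_k\cup\Sing(\Ecal_\infty)$, and the claimed current convergence.

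The main obstacle is the upgrade from Hausdorff to complex-analytic structure on the bubbling locus, which rests on the tangent cone analysis, monotonicity, and $\epsilon$-regularity being available for the Vafa-Witten equation rather than just for pure Yang-Mills. The point that makes this go through is that under the uniform spectral cover bound, the Higgs contribution to the energy is $O(1)$ and scale-subcritical, so in any rescaled blow-up limit at a bubbling point the $[\phi;\phi]$ term vanishes and one recovers an HYM-type tangent cone, to which the classical rectifiability and integrality arguments apply verbatim. The extension step likewise depends only on having $\Lambda_\omega F_{A_\infty}\in L^\infty$, which follows from Corollary \ref{Cor3.5} applied uniformly in the sequence.
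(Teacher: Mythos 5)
Your proposal is correct and follows essentially the same route as the paper: the uniform spectral-cover bound gives $\|\Lambda_\omega F_{A_i}\|_{C^0}$ control, so Uhlenbeck's monotonicity and $\epsilon$-regularity for integrable connections with bounded Hermitian--Einstein tensor let Tian's rectifiability and analyticity arguments (\cite[Thm.\ 4.3.3]{Tian:00}, together with \cite{HongTian:04}) apply, while the reflexive sheaf $\Ecal_\infty$ comes from Bando--Siu and $\phi_\infty$ extends by the Hartogs property of reflexive sheaves. The only cosmetic difference is that you phrase the reduction to the HYM case via blow-up limits killing the Higgs term, whereas the paper invokes the bounded Hermitian--Einstein tensor hypothesis of Uhlenbeck's results directly; the substance is the same.
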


\begin{proof}

The argument follows exactly as the Hermitian-Yang-Mills case by Tian (\cite{Tian:00}) combined with the results by Uhlenbeck \cite{UhlenbeckPreprint} and Bando-Siu (\cite{BandoSiu:94}). More precisely, the main results of \cite{UhlenbeckPreprint} states that the monotonicity formula and $\epsilon$ regularity holds for a sequence of integrable unitary connections with bounded Hermitian-Yang-Mills/Hermitian-Einstein tensor and bounded $L^2$ norm of the curvature. With these two key ingredients, the argument for Yang-Mills connections in \cite[Section $3$]{Tian:00} can be used to show $Z$ is $2n-4$ rectifiable, i.e., it has Hausdorff codimension at least $2n-4$ and locally can written as a countable union of images of continuously differentiable maps $\{f_i: \RBbb^{2n-4} \rightarrow X\}_i$ away from $(2n-4)$-Hausdorff  measure zero set. Now except the extension part, the conclusion follows as \cite[Theorem $4.3.3.$]{Tian:00} and \cite[Theorem 11]{HongTian:04}. The fact that $A_\infty$ defines a reflexive sheaf follows from \cite[Theorem $2$]{BandoSiu:94} and the extension of $A_\infty$ follows from \cite[Proposition 1]{BandoSiu:94}. The extension of $\phi_\infty$ follows from the Hartog's property of sections of reflexive sheaves (\cite[page 160]{Kobayashi:87}).
\end{proof}

\section{Compactness for rank two solutions with trace free Higgs fields}\label{Section 6}
\subsection{$\ZBbb_2$ Holomorphic $n$-forms} We first define the notion of $\ZBbb_2$ holomorphic $n$-forms by generalizing Taubes' notion of $\ZBbb_2$ harmonic $2$-forms.
\begin{defi}\label{SpinorZ2}
A $\ZBbb_2$ holomorphic $n$-form is defined as a triple $(L, \rho, Z)$ satisfying
\begin{itemize}     
\item $Z$ is a Hausdorff codimension at least two closed subset of $X$;

\item $L$ is the natural complex unitary flat line bundle associated to a principal $\ZBbb_2$ bundle over $X\setminus Z$;
 
\item $\rho\in H^0(X\setminus Z, L\otimes  K_X)$ and $|\rho|$ extends to a H\"older continuous function over $X$;
\item $Z=|\rho|^{-1}(0)$. 
\end{itemize}
\end{defi}

Actually the definition tells more  
\begin{lem}
Given a nonzero $\ZBbb_2$ holomorphic $n$-form $(L, \rho, Z)$, the following holds 
\begin{itemize}
\item $\rho^2$ can be extended to be a holomorphic section of $K^2_X$ over $X$;
\item $Z$ is a subvariety of pure codimension one;
\item the principal $\ZBbb_2$ bundle associated to $L$ arises as the cyclic covering by taking the square root of $\rho^2$.
\end{itemize}
\end{lem}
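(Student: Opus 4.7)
The plan is to first extend $\rho^2$ across $Z$ using removable-singularity theorems, then identify $Z$ as the zero divisor of this extension, and finally recognize the $\ZBbb_2$ bundle as the sign ambiguity in choosing a square root of $\rho^2$.

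First I would observe that since $L$ is associated to a principal $\ZBbb_2$ bundle, its transition functions take values in $\{\pm 1\}$, so $L^2$ is canonically trivial. Hence $\rho^2$ is naturally a holomorphic section of $L^2 \otimes K_X^2 \cong K_X^2$ over $X \setminus Z$. The hypothesis that $|\rho|$ extends to a H\"older continuous function on the compact manifold $X$ guarantees that $|\rho^2| = |\rho|^2$ is uniformly bounded on $X \setminus Z$. Since $Z$ has Hausdorff codimension at least two, the Riemann removable singularity theorem (applied in local trivializations of $K_X^2$) produces a unique holomorphic extension of $\rho^2$ to a section of $K_X^2$ on all of $X$. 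This settles the first bullet.

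For the second bullet, note that the assumption $Z = |\rho|^{-1}(0)$ forces $\rho$ to be nowhere vanishing on $X \setminus Z$, so the nonzero assumption on the $\ZBbb_2$ holomorphic $n$-form implies $\rho^2$ is nowhere vanishing on $X \setminus Z$, hence not identically zero on $X$. By continuity the extended $|\rho|^2$ agrees with $|\rho^2|$ everywhere on $X$, so $Z = \{|\rho| = 0\} = \{\rho^2 = 0\}$ coincides with the zero locus of the holomorphic section $\rho^2$ of the line bundle $K_X^2$. The zero locus of a nonzero holomorphic section of a line bundle is an effective divisor, i.e.\ a subvariety of pure codimension one.

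For the third bullet, I would consider the analytic subset $X_{\rho^2} := \{y \in K_X : y^{\otimes 2} = \rho^2(\pi(y))\} \subset K_X$ inside the total space of $K_X$, where $\pi : K_X \to X$ is the projection; this is the cyclic double cover obtained by extracting a square root of the canonical section $\rho^2$, branched along $Z$ and \'etale over $X \setminus Z$. To identify it with the principal $\ZBbb_2$ bundle underlying $L$, I would argue locally on $X \setminus Z$: a local $\ZBbb_2$-trivialization $e$ of $L$ lets us write $\rho = s \otimes e$ for a local holomorphic section $s$ of $K_X$, and the replacement $e \mapsto -e$ induces $s \mapsto -s$; the pairs $(x, \pm s(x))$ thus parametrize precisely the two local trivializations of $L$ modulo sign. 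The gluing on $L$ governed by transition functions in $\{\pm 1\}$ matches the natural gluing of $X_{\rho^2}$ over $X \setminus Z$, yielding a canonical isomorphism of unramified double covers.

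The main obstacle I anticipate is purely technical and lies in step one: verifying that bounded holomorphic sections of a line bundle extend across a closed subset of Hausdorff codimension at least two. This is standard once one trivializes $K_X^2$ locally and invokes the classical Riemann extension (or Shiffman's theorem), but it does use crucially that $|\rho|$ is H\"older continuous on the compact $X$ to obtain the boundedness hypothesis. The remaining steps are then tautological identifications of the two natural double covers.
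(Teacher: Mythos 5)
Your proposal is correct and follows essentially the same route as the paper: using $L^{\otimes 2}\cong\Ocal_X$ to view $\rho^{2}$ as a bounded holomorphic section of $K_X^{\otimes 2}$ on $X\setminus Z$, extending it across the codimension-two set $Z$ by a removable-singularity theorem (the paper cites Besicovitch where you invoke Riemann/Shiffman, with boundedness supplied by the continuity of $|\rho|$), identifying $Z$ with the zero divisor of the extension, and recognizing the $\ZBbb_2$ bundle as the square-root cyclic cover. Your treatment of the last point merely spells out what the paper dismisses as following from the definition.
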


\begin{proof}
Since $L$ is defined by a principal $\ZBbb_2$ bundle, $L^{\otimes 2} = \Ocal_X$ as a holomorphic line bundle. Thus $\rho^{\otimes 2}$ gives a holomorphic section of $(L\otimes K_X)^{\otimes 2}$ over $X\setminus Z$. By the extension property of analytic functions (\cite{Besicovitch1931}), we obtain $\rho^{\otimes 2}$ can be extended to be a global section of $K_X^{\otimes 2}$ over $X$. In particular, we obtain $Z=\rho^{-1}(0)$ is either empty or a hypersurface. The last statement follows from definition. 
\end{proof}
In particular, this gives the following nonexistence results
\begin{cor}
Suppose $\deg K_X < 0$, then there does not exist any nontrivial $\ZBbb_2$ holomorphic $n$-forms
\end{cor}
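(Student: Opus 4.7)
The plan is a direct application of the preceding lemma together with the classical vanishing of sections of negative-degree line bundles. Suppose for contradiction that $(L, \rho, Z)$ is a nontrivial $\ZBbb_2$ holomorphic $n$-form. By the preceding lemma, $\rho^{\otimes 2}$ extends to a global holomorphic section of $K_X^{\otimes 2}$ over $X$, using that $L^{\otimes 2}$ is holomorphically trivial since $L$ is associated to a principal $\ZBbb_2$ bundle. Because $\rho$ is nontrivial on $X\setminus Z$, the extension $\rho^{\otimes 2}$ is a nonzero element of $H^0(X, K_X^{\otimes 2})$.

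Next I would invoke the classical fact that on a compact K\"ahler manifold $(X,\omega)$, a holomorphic line bundle $M$ with $\deg M < 0$ admits no nonzero holomorphic sections. The standard argument: any nonzero holomorphic section $s$ has an effective divisor of zeros $D = (s)_0$ with $M \cong \Ocal_X(D)$, so pairing $[D]$ against $\omega^{n-1}$ yields
$$\deg M = \int_X c_1(M) \wedge \frac{\omega^{n-1}}{(n-1)!} = \int_D \frac{\omega^{n-1}}{(n-1)!} \geq 0,$$
contradicting $\deg M < 0$.

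Applying this to $M = K_X^{\otimes 2}$, whose degree equals $2\deg K_X < 0$ by hypothesis, contradicts the nonvanishing of $\rho^{\otimes 2}$ and establishes the nonexistence statement. There is essentially no technical obstacle: the extension step is already supplied by the preceding lemma, and the vanishing of global sections of negative-degree line bundles on a compact K\"ahler manifold is standard.
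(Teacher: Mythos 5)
Your argument is correct and coincides with the paper's own proof: both deduce from the preceding lemma that $\rho^{\otimes 2}$ extends to a nonzero holomorphic section of $K_X^{\otimes 2}$, which forces $\deg K_X \geq 0$, contradicting the hypothesis. You merely spell out the standard vanishing of sections of negative-degree line bundles, which the paper leaves implicit.
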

\begin{proof}
    Otherwise, since $\rho^2$ gives a nontrivial section of $K_X^{\otimes 2}$, $\deg(K_X) \geq 0$ which is a contradiction.
\end{proof}

Fix $(A, \phi)$ to be a rank two solution of the Vafa-Witten equation with 
$$
\tr(\phi)=0 \text{ and } \phi \neq 0.
$$
Denote
$$
Z=(b_2(\phi))^{-1}(0).
$$
We have the following 
\begin{lem}\label{Lemma6.4}
There exists natural $\ZBbb_2$ holomorphic $n$-forms $(L_{b(\phi)}, \nu, Z)$ associated to the spectral cover $X_{b(\phi)}$. 
\end{lem}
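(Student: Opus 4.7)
The plan is to exploit the double-cover structure of $X_{b(\phi)} \to X$ that the trace-free condition provides, and then to build $\nu$ by descent from the tautological section on the spectral cover. First I would observe that $\tr\phi = 0$ forces $b_1(\phi) = 0$, so the defining equation of $X_{b(\phi)}$ simplifies to $\lambda^2 = -b_2(\phi)$, and $\pi: X_{b(\phi)} \to X$ becomes a double cover branched precisely along $Z = b_2(\phi)^{-1}(0)$. Assuming $b_2(\phi) \not\equiv 0$ (otherwise $\phi$ is nilpotent and the statement holds trivially with $L = \Ocal_X$, $\nu = 0$, $Z = \emptyset$), the set $Z$ is a hypersurface, hence of real codimension at least two as required by Definition \ref{SpinorZ2}.

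Next, over $X \setminus Z$ the restriction $\pi|_{X \setminus Z}$ is an unramified double cover with nontrivial deck involution $\sigma: \lambda \mapsto -\lambda$, i.e.\ a principal $\ZBbb_2$-bundle $P_{\ZBbb_2}$. I would define $L_{b(\phi)}$ as the complex line bundle associated to $P_{\ZBbb_2}$ via the sign representation $\ZBbb_2 \to \{\pm 1\} \subset \U(1)$; since this representation is unitary and the structure group is discrete, $L_{b(\phi)}$ inherits a canonical flat unitary connection. To produce $\nu$, I would start from the tautological section $\tau$ of $\Kcal = \pi^* K_X$ over $K_X$: its restriction $\tau_b := \tau|_{X_{b(\phi)}}$ is a holomorphic section of $\pi^* K_X$ satisfying the anti-invariance $\sigma^* \tau_b = -\tau_b$, so it descends to a holomorphic section $\nu \in H^0(X \setminus Z,\, L_{b(\phi)} \otimes K_X)$.

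Finally, I would verify the remaining conditions of Definition \ref{SpinorZ2}. Because $L_{b(\phi)}^{\otimes 2}$ is canonically the trivial unitary line bundle, $\nu^{\otimes 2}$ is identified with $-b_2(\phi) \in H^0(X, K_X^{\otimes 2})$; taking pointwise norms with respect to $H_K$ yields $|\nu| = |b_2(\phi)|^{1/2}$, which extends to all of $X$ as a $\tfrac{1}{2}$-H\"older continuous function whose zero locus is exactly $Z$. There is no serious obstacle here; the only point requiring care is the correct tracking of the $\ZBbb_2$-equivariance of $\tau_b$, to ensure that it descends unambiguously to a section of $L_{b(\phi)} \otimes K_X$ rather than merely to one of its associated bundles.
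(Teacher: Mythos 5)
Your construction is correct in the main case, but it takes a different route from the paper. The paper never mentions the deck involution or descent: it works directly with the Higgs bundle, decomposing $\Ecal=\Lcal_{\nu^x}\oplus\Lcal_{-\nu^x}$ on small open sets $U_x\subset X\setminus Z$ into eigen-line bundles of $\phi$, taking the local eigenvalue forms $\nu^x$ (local square roots of $-b_2(\phi)$) and gluing the data $\cup_x(U_x,\nu^x)$ into a section of $L\otimes K_X$, with $|\nu|=\sqrt{|b_2(\phi)|}$ giving the H\"older extension. You instead bypass $\Ecal$ entirely: you use only $b_2(\phi)$, realize $X_{b(\phi)}$ as the double cover $\lambda^2=-b_2(\phi)$, take $L$ as the line bundle associated to the unramified $\ZBbb_2$-cover over $X\setminus Z$ via the sign representation, and obtain $\nu$ by descending the tautological section $\tau|_{X_{b(\phi)}}$, which is anti-invariant under $\lambda\mapsto-\lambda$. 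This is essentially the intrinsic viewpoint the paper only records afterwards in the remark following the lemma (``local choices of $\sqrt{-b_2}$'') and in the general construction of Theorem \ref{Main}; it has the advantage of making the $\ZBbb_2$-equivariance and the identification $\nu^{\otimes 2}=-b_2(\phi)$ manifest, and of depending only on the spectral datum $b_2$ rather than on the eigendecomposition of $\Ecal$. The paper's version keeps the link to the splitting of $\Ecal$ explicit, which is the form actually reused in the proof of Theorem \ref{Rank=2}. Both arguments establish the four conditions of Definition \ref{SpinorZ2} in the same way at the end, via $|\nu|=|b_2(\phi)|^{1/2}$.

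One small slip: your fallback for the nilpotent case is not consistent with Definition \ref{SpinorZ2}. If $b_2(\phi)\equiv 0$ then $Z=b_2(\phi)^{-1}(0)=X$, not $\emptyset$, and the triple $(\Ocal_X,\,0,\,\emptyset)$ violates the requirement $Z=|\rho|^{-1}(0)$ since $|\rho|\equiv 0$ there. The honest reading is that the lemma tacitly assumes $b_2(\phi)\not\equiv 0$ (in the sequential application this is exactly what Proposition \ref{Cor5.2} provides for $b_2^\infty$); the paper's proof makes the same implicit assumption, so this does not affect the substance of your argument.
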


\begin{proof}
Indeed, for any $x\in X\setminus Z$, we obtain $b_1(\phi)=0$ and $b_2(\phi)=\det \phi \neq 0$ near $x$. Thus locally near $x$, the spectral cover is a trivial $2$-to-$1$ cover and we can decompose 
$$
\Ecal=\Lcal_{\nu^x} \oplus \Lcal_{-\nu^x}
$$
where $\nu^x$ is a nonzero holomorphic $(n,0)$ form near $x$ and $\phi$ acts on $L_{\nu}$ by tensoring with $\nu^x$. Now we can cover $X\setminus Z$ with such open sets and get data $\cup_x (U_x, \nu^x)$ which can be naturally viewed as a holomorphic section of $L\otimes K_X$ over $X\setminus Z$. We denote the section as $\nu$. Then we obtain 
$$
|\nu|^2=|b_2(\phi)|
$$
thus $|\nu|=\sqrt{|b_2(\phi)|}$ is a H\"older continuous function across $Z$.
\end{proof}

\begin{rmk}
In general, given any $b_2\in H^0(X, K_X^{\otimes 2})$, one can take the cyclic covering defined by the square root of $b_2$ in $K_X$. The $\ZBbb_2$ holomorphic $n$-form associated to $X_b$ is then union of local choices of $\sqrt{-b_2}$ away from the discriminant locus $\Delta_b$.
\end{rmk}

\subsection{Sequential limits}
In the following, we will consider a sequence of rank two solutions $(A_i, \phi_i)$ to the Vafa-Witten equation with 
$$
\tr(\phi_i)=0
$$
and 
$$
\lim_i \|\phi_i\|_{L^2} = \infty. 
$$
Denote
$$
\rho_i=\frac{\phi_i}{\|\phi_i\|}
$$
and $r_i=\|\phi_i\|$, then $(A_i, \rho_i)$ satisfies 
\begin{itemize}
\item $F^{0,2}_{A_i}=0$;
\item $\sqrt{-1} \Lambda_{\omega} F_{A_i}+r_i^2[\rho_i;\rho_i]=c\id$;
\item $\tr(\rho_i) =0$;
\item $\lim r_i =\infty$
\end{itemize}
We start with the following simple observation from Proposition \ref{WeinzenbockFormula}.
\begin{lem}
$\|[\rho_i;\rho_i]\|_{L^2} \leq \frac{1}{r_i}$. 
\end{lem}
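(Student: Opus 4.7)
The plan is to deduce this as a direct scaling consequence of the Weitzenb\"ock identity in Proposition \ref{WeinzenbockFormula}. Applied to $\phi_i$, that identity gives
$$
\|\nabla_B \phi_i\|_{L^2}^2 + \|[\phi_i;\phi_i]\|_{L^2}^2 \;\sim\; 2\pi \deg(K_X)\,\|\phi_i\|_{L^2}^2 \;=\; 2\pi\deg(K_X)\,r_i^2,
$$
so in particular $\|[\phi_i;\phi_i]\|_{L^2} \lesssim r_i$.

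Next I would use the fact that $[\cdot;\cdot]$ is quadratic in its entry: since $\rho_i = \phi_i/r_i$, one has pointwise $[\rho_i;\rho_i] = r_i^{-2}[\phi_i;\phi_i]$, and therefore
$$
\|[\rho_i;\rho_i]\|_{L^2} \;=\; r_i^{-2}\,\|[\phi_i;\phi_i]\|_{L^2} \;\lesssim\; \frac{1}{r_i}.
$$
This is exactly the claimed inequality (with the usual $\lesssim$ convention of the paper, where the implicit constant depends only on $\deg(K_X)$ and the dimension).

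There is no real obstacle here; the only thing to double-check is the scaling convention for $[\,\cdot\,;\,\cdot\,]$, namely that if one writes $\phi_i = M_i\,\sigma$ locally with $\sigma$ a holomorphic $n$-form, then $[\phi_i;\phi_i] = [M_i,M_i^*]\,|\sigma|^2$ is genuinely quadratic in the matrix factor, so rescaling $\phi_i$ by $r_i^{-1}$ rescales $[\phi_i;\phi_i]$ by $r_i^{-2}$. The hypothesis $\deg(K_X)\geq 0$ (which is automatic in the regime $\phi\neq 0$ by Corollary~3.2) is what keeps the right-hand side of the Weitzenb\"ock bound finite, and $r_i\to\infty$ makes the conclusion meaningful.
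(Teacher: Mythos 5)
Your argument is exactly the one the paper intends: the lemma is stated as a ``simple observation from Proposition \ref{WeinzenbockFormula}'', i.e.\ the integrated Weitzenb\"ock identity gives $\|[\phi_i;\phi_i]\|_{L^2}^2 \lesssim \deg(K_X)\,\|\phi_i\|_{L^2}^2$, and the quadratic scaling $[\rho_i;\rho_i]=r_i^{-2}[\phi_i;\phi_i]$ then yields the bound. Your caveat about the constant is also right: since the Weitzenb\"ock statement is only an equivalence up to dimensional constants, the ``$\leq \frac{1}{r_i}$'' in the lemma should indeed be read as $\lesssim \frac{1}{r_i}$, which is all that is used later.
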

By standard elliptic theory, passing to a subsequence, we can always assume 
$$
b_2(\rho_i) \rightarrow b_2^\infty.
$$
We need the following simple observation
\begin{prop}\label{Cor5.2}
$b_2^\infty \neq 0$.
\end{prop}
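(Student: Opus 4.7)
The approach is to combine the pointwise linear algebra equivalence of Lemma \ref{MatrixNorm}(1) with the global $L^2$ bound on $[\rho_i;\rho_i]$ coming from the preceding lemma. Since $\tr(\phi_i)=0$ forces $b_1(\rho_i)\equiv 0$, specializing Lemma \ref{MatrixNorm}(1) to the rank-two, trace-free case yields a pointwise equivalence
$$
|\rho_i|^2 \;\sim\; |[\rho_i;\rho_i]| \;+\; |b_2(\rho_i)|
$$
on all of $X$. Indeed, in a local frame where $\phi_i = M_i\sigma$ for a nonvanishing local holomorphic $n$-form $\sigma$, writing $\rho_i = (r_i^{-1}M_i)\sigma$, Lemma \ref{MatrixNorm}(1) applied to $r_i^{-1}M_i$ gives
$$
|r_i^{-1}M_i|^2 \;\sim\; |[r_i^{-1}M_i, r_i^{-1}M_i^*]| \;+\; |b_2(r_i^{-1}M_i)|,
$$
and multiplication by $|\sigma|^2$ globalizes the estimate once one notes $|b_2(\rho_i)| = |b_2(r_i^{-1}M_i)||\sigma|^2$ under the Hermitian norm on $K_X^{\otimes 2}$ induced from that on $K_X$.

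Integrating over $X$ and using $\|\rho_i\|_{L^2}=1$, one obtains
$$
1 \;\sim\; \|[\rho_i;\rho_i]\|_{L^1(X)} \;+\; \|b_2(\rho_i)\|_{L^1(X)}.
$$
The preceding lemma gives $\|[\rho_i;\rho_i]\|_{L^2} \leq r_i^{-1}$, whence by Cauchy--Schwarz $\|[\rho_i;\rho_i]\|_{L^1} \lesssim r_i^{-1} \to 0$ as $i\to\infty$. Consequently $\|b_2(\rho_i)\|_{L^1(X)}$ is bounded away from zero for all sufficiently large $i$.

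To close the argument, each $b_2(\rho_i)$ is a holomorphic section of $K_X^{\otimes 2}$ (since $\dbar_{A_i}\rho_i=0$ and $b_2$ is a gauge-invariant polynomial expression in $\rho_i$), and therefore lies in the finite-dimensional complex vector space $H^0(X, K_X^{\otimes 2})$. All norms on this space are equivalent, so the assumed convergence $b_2(\rho_i)\to b_2^\infty$ is in particular convergence in $L^1(X)$. Passing to the limit yields $\|b_2^\infty\|_{L^1(X)} > 0$, and hence $b_2^\infty \neq 0$.

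The only real subtlety is the globalization of the local pointwise equivalence, which is handled by the compatibility of the metrics on $K_X$ and $K_X^{\otimes 2}$ so that the local factor $|\sigma|^2$ cancels uniformly; otherwise the proof is a direct combination of the finite-dimensional convergence principle with the Weitzenb\"ock-based control on $[\rho_i;\rho_i]$.
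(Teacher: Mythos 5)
Your proof is correct and follows essentially the same route as the paper: the pointwise equivalence from Lemma \ref{MatrixNorm} (with $b_1(\rho_i)=0$), integration against $\|\rho_i\|_{L^2}=1$, the decay $\|[\rho_i;\rho_i]\|_{L^2}\leq r_i^{-1}$ from the Weitzenb\"ock estimate, and passage to the limit using the convergence $b_2(\rho_i)\to b_2^\infty$. The extra details you supply (globalizing the local estimate via the induced metric on $K_X^{\otimes 2}$, Cauchy--Schwarz, and norm equivalence on the finite-dimensional space $H^0(X,K_X^{\otimes 2})$) are just explicit versions of steps the paper leaves implicit.
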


\begin{proof}
Indeed, we obtain 
$$
|\rho_i|^2 \sim |[\rho_i;\rho_i]|+|b_1(\rho_i)|^2 + |b_2(\rho_i)|= |[\rho_i;\rho_i]| + |b_2(\rho_i)|
$$
thus 
$$
1\sim \int_X |[\rho_i;\rho_i]|+ \int_{X} |b_2(\rho_i)|
$$
Since $b_2(\rho_i) \rightarrow b_2^\infty$ smoothly over $X$, by taking the limit and using that
$$
\int_X|[\rho_i;\rho_i]| \rightarrow 0,
$$ 
we obtain 
$$
\int_{X} |b_2^\infty| \neq 0.
$$
The conclusion follows.
\end{proof}

Given this, we can take the spectral cover $X_{b^\infty}$ (resp. $X_{b(\rho_i)}$) defined by $b^\infty=(b_1^\infty, b_2^\infty)$ (resp. $b(\phi_i)=(b_1(\rho_i), b_2(\phi_i))$). 
\begin{prop}
By passing to a subsequence, $X_{b(\rho_i)}$ converges naturally to $X_{b^\infty}$. 
\end{prop}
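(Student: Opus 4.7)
The plan is to reduce the claim to the smooth convergence of the coefficients $b_k(\rho_i)$, which cut out the spectral covers as zero loci of holomorphic maps on $K_X$.

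First, the trace-free assumption $\tr(\rho_i)=0$ gives $b_1(\rho_i) = -\tr(\rho_i) \equiv 0$, so for each $i$ the spectral cover $X_{b(\rho_i)}$ is cut out in $K_X$ by the single equation $\lambda^2 + b_2(\rho_i) = 0$. Second, I would extract a $C^\infty$-convergent subsequence $b_2(\rho_i) \to b_2^\infty$: the normalization $\|\rho_i\|_{L^2}=1$ combined with Corollary \ref{L2ImpliesC0} gives a uniform $C^0$ bound on $\rho_i$, hence a uniform $C^0$ bound on the holomorphic section $b_2(\rho_i) \in H^0(X, K_X^{\otimes 2})$; since $H^0(X,K_X^{\otimes 2})$ is finite dimensional, standard compactness produces a subsequence converging in $C^\infty$. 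Thus $b(\rho_i) = (0, b_2(\rho_i)) \to (0, b_2^\infty) = b^\infty$ smoothly on $X$, and $b_2^\infty \not\equiv 0$ by Proposition \ref{Cor5.2}.

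Third, I would write each spectral cover as the zero locus of the holomorphic map
$$
\Psi_i : K_X \to K_X^{\otimes 2}, \qquad \Psi_i(\lambda) = \lambda^2 + b_2(\rho_i),
$$
so that $X_{b(\rho_i)} = \Psi_i^{-1}(0)$ and $X_{b^\infty} = \Psi_\infty^{-1}(0)$. The smooth convergence $\Psi_i \to \Psi_\infty$ on every compact subset of $K_X$ then translates into convergence of the zero loci in the natural sense: on $K_X \setminus \pi^{-1}(\Delta_{b^\infty})$ the fiber derivative $\partial_\lambda \Psi_\infty = 2\lambda$ is nonvanishing, so by the implicit function theorem $X_{b(\rho_i)} \to X_{b^\infty}$ smoothly on compact subsets; globally on $K_X$ one has convergence as currents of integration, since all $\Psi_i$ have constant fiber degree two.

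Since the statement is essentially the continuous dependence of roots of a quadratic polynomial on its coefficients, I do not expect any genuine obstacle. The only care needed is in specifying in which sense convergence holds across the (possibly nonempty) discriminant locus $\Delta_{b^\infty}$; this is naturally handled by viewing the spectral covers as divisors on $K_X$ cut out by the smoothly converging holomorphic maps $\Psi_i$.
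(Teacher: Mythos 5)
Your proof is correct and follows essentially the same route as the paper: the spectral covers are determined by the coefficients $b(\rho_i)=(0,b_2(\rho_i))$, and the smooth convergence $b_2(\rho_i)\to b_2^\infty$ (extracted exactly as the paper does, via the $C^0$ bound and finite-dimensionality/elliptic theory) forces the square-root loci $X_{b(\rho_i)}$ to converge to $X_{b^\infty}$. Your extra remarks on the implicit function theorem away from $\Delta_{b^\infty}$ and convergence as divisors merely make precise the ``natural'' convergence the paper asserts, so no gap remains.
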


\begin{proof}
This follows from the simple fact that $b(\rho_i) \rightarrow b^\infty$ smoothly. Thus $\sqrt{b(\rho_i)} \rightarrow \sqrt{b^\infty}$ smoothly.
\end{proof}

Now we are ready to prove the main result in the rank two case.

\begin{proof}[Proof of Theorem \ref{Rank=2}]
Let $Y\subset X\setminus Z$ be any compact subset. Fix any $x\in Y$, i.e., $b_2^\infty(x) \neq 0$. Since $b_2(\rho_i) \rightarrow b_2^\infty$ smoothly, we obtain for $i$ large, $b_2(\rho_i)(x) \neq 0$, thus locally near $x$, the spectral cover is a trivial $2$-to-$1$ cover and we can decompose 
$$
\Ecal_i=\Lcal_{\nu^x_i} \oplus \Lcal_{-\nu^x_i}
$$
where $\nu^x_i$ is a nonzero holomorphic $(n,0)$ form near $x$ and $\rho_i$ acts on $L_{\nu}$ as tensoring by $\nu^x_i$. Furthermore $\nu^x_i$ converges to a square root $\nu^x_\infty$ of $b_2^\infty(x)$. By taking a cover of $X\setminus Z$ and making a choice of square root, we get data $\cup_x (U_x, \nu^x_\infty)$ away from $Z$ which defines a $\ZBbb_2$ holomorphic $n$-form $(L, \nu_\infty, \Delta(b_\infty))$ associated to the spectral cover $X_{b_\infty}$. Now the construction of $\sigma_i$ is simply by locally defining over $Y$
$$
\sigma_i^x = \pi_{i+}^x-\pi_{i-}^{x}
$$
over $U_x$ where $x\in Y$ and $\pi_{i\pm}^x$ denotes the orthogonal projections to $\Lcal_{\pm \nu_i^x}$ respectively. We prove $\sigma_i$ satisfies the properties required over $Y$.  For this, denote 
$p_{i\pm}: \Ecal \rightarrow \Lcal_{\pm\nu_i}$
to be the local holomorphic projections. Write 
$$
\sigma_i \nu_\infty-\rho_i=\nu_\infty \pi_{i+}^x-\nu_\infty \pi_{i-}^x-(\nu_i p^x_{i+}-\nu_i p^x_{i-}).
$$
By Proposition \ref{Prop3.13}, we obtain locally on a smaller open set
$$
|p_{i\pm}^x| \leq C, |\pi_{i\pm}^x| \leq C,
$$
and 
$$
|\pi^x_{\pm}-p^x_{\pm}| \xrightarrow{C^0_{loc}(X\setminus Z)} 0, |\nabla_{A_i} \pi^x_{i\pm}| \xrightarrow{L^2_{loc}(X\setminus Z)} 0, |\nabla_{A_i} p_i^{\pm}| \xrightarrow{L^2_{loc}(X\setminus Z)} 0
$$
as $i\rightarrow \infty$ near $x$. Combined with $\nu_i \rightarrow \nu_\infty$, we get the desired properties over $Y$. Since we can take $Y$ to be a deformation restraction of $X\setminus Z$, $\sigma_i$ can be extended to be over $X\setminus Z$.  To get the statement about the convergence over $X\setminus Z$, this follows from a standard diagonal argument by exhausting $X\setminus Z$ with countably many precompact open subsets.
\end{proof}

Assume $\dim_{\CBbb} X =2$. This recovers Taubes' results  in the K\"ahler surface case (\cite{Taubes17}). Here by assuming $\dim_{\CBbb}X=2$, the pair $(A, a=\phi-\phi^*)$ satisfies the real version of the Vafa-Witten equation over a Riemannian four manifold
\begin{itemize}
\item $d_Aa=0$;
\item $F_A^+=\frac{1}{2}[a;a]$.
\end{itemize}
In particular, this gives an algebraic characterization of the limiting data in Taubes' analytic results over K\"ahler surfaces.

\begin{cor}\label{TaubesZ2}
Assume $(X, \omega)$ is a K\"ahler surface. The $\ZBbb_2$ harmonic $2$-form obtained by Taubes in \cite{Taubes17} is determined by the $\ZBbb_2$ holomorphic $2$-form associated to the spectral cover $X_{b^\infty}$.
\end{cor}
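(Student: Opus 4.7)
My plan is to identify Taubes' $\ZBbb_2$ harmonic $2$-form $a_\infty$ with the real self-dual form built directly out of the $\ZBbb_2$ holomorphic $2$-form of Lemma \ref{Lemma6.4}. Concretely, I expect to show
$$
a_\infty \;=\; \sigma_\infty \cdot \frac{\nu_\infty - \nu_\infty^*}{\sqrt 2}
$$
under a natural identification of the underlying flat $\ZBbb_2$-bundles, where $(L,\nu_\infty,Z)$ is the $\ZBbb_2$ holomorphic $2$-form associated to the limiting spectral cover $X_{b^\infty}$ and $\sigma_\infty$ is the limit (after gauge) of the isometric embeddings $\sigma_i$ produced by Theorem \ref{Rank=2}. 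The strategy is to translate the complex convergence statement of Theorem \ref{Rank=2} into the real self-dual picture used by Taubes, and then to verify harmonicity of the limit using K\"ahler identities.

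First I would reconcile normalizations. Over a K\"ahler surface, $a_i := \phi_i-\phi_i^*$ splits pointwise into its $(2,0)$ and $(0,2)$ parts, which are orthogonal, so $\|a_i\|_{L^2}^2 = 2\|\phi_i\|_{L^2}^2$; hence Taubes' normalized section $a_i/\|a_i\|_{L^2}$ is exactly $\hat a_i = (\rho_i-\rho_i^*)/\sqrt 2$. The isometric embeddings from Theorem \ref{Rank=2} are self-adjoint, $\sigma_i=\pi_{i+}-\pi_{i-}$, so taking adjoints in $\rho_i-\sigma_i\nu_\infty\to 0$ yields $\rho_i^*-\sigma_i\nu_\infty^*\to 0$, and therefore
$$
\hat a_i - \sigma_i \cdot \frac{\nu_\infty - \nu_\infty^*}{\sqrt 2}\ \xrightarrow{C^0_{loc}(X\setminus Z)}\ 0,
$$
with covariant derivatives of the difference going to zero in $L^2_{loc}(X\setminus Z)$. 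Next, viewing $(\nu_\infty-\nu_\infty^*)/\sqrt 2$ as a self-dual $2$-form with values in the real flat line bundle $L_\RBbb$ underlying $L$, I would check harmonicity on $X\setminus Z$: $\nu_\infty$ is a local holomorphic $(2,0)$-form, so $\bar\partial\nu_\infty=0$, and $\partial\nu_\infty=0$ for type reasons on a complex surface; since $(2,0)$-forms on a K\"ahler surface are self-dual, $\ast\nu_\infty=\nu_\infty$ and $d^*\nu_\infty = -\ast d \ast \nu_\infty = 0$, and similarly for $\nu_\infty^*$. Hence the twisted real form $\nu_\infty-\nu_\infty^*$ is closed and co-closed, and combined with the H\"older extension of $|\nu_\infty|$ across $Z=|\nu_\infty|^{-1}(0)$ from Lemma \ref{Lemma6.4} this matches Taubes' definition of a $\ZBbb_2$ harmonic self-dual $2$-form.

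The identification with Taubes' limit then follows from uniqueness of the limit of $\hat a_i$: along the subsequence supplied by Theorem \ref{Rank=2}, $\hat a_i$ converges in $C^0_{loc}(X\setminus Z)$ to the right-hand side above, while Taubes' $a_\infty$ is the $L^2_{loc}$ limit of (a further subsequence of) the same $\hat a_i$, so the two limits must agree. I expect the main obstacle to be matching the two a priori distinct $\ZBbb_2$-bundles: the one cut out intrinsically by the sign ambiguity of $\sqrt{-b_2^\infty}$ in $K_X$ (ours) versus the orientation ambiguity of Taubes' real limiting line bundle. Both are flat double covers of $X\setminus Z$ branched along $Z=(b_2^\infty)^{-1}(0)$, and the uniform control on the spectral projections $\pi_{i\pm}$ from Section \ref{Estimates} identifies them canonically; one should also verify that Taubes' singular set coincides with $Z$, which follows from the $C^0$ estimates on $\rho_i$ away from the spectral discriminant established in Section \ref{Estimates}.
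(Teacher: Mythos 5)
Your proposal is correct and follows essentially the same route as the paper: the paper also passes from the $\ZBbb_2$ holomorphic $2$-form to the real (you use the imaginary, i.e.\ $(\nu_\infty-\nu_\infty^*)/\sqrt2$) part, notes that on a K\"ahler surface this is a closed form of pure $(2,0)+(0,2)$ type, hence harmonic, and identifies it with Taubes' limit via the convergence statement of Theorem \ref{Rank=2}. Your version merely spells out the normalization $\|a_i\|_{L^2}^2=2\|\phi_i\|_{L^2}^2$, the matching of the flat $\ZBbb_2$-bundles, and the self-duality of $(2,0)$-forms (which is the correct statement, matching $a\in\Lambda^+\otimes\ad_P$; the paper's ``anti-self-dual'' is a slip) in more detail than the paper does.
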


\begin{proof}
Let $\rho$ be the limiting holomorphic $\ZBbb_2$ $2$-form obtained in our result. Take $\nu=Re(\rho)$ which is then closed and anti-self-dual (\cite[Page 47]{DonaldsonKronheimer:90}). In particular, $\nu$ is harmonic. This then recovers the notion of $\ZBbb_2$ harmonic  2-forms in Taubes' results for Vafa-Witten equations over K\"ahler surfaces with structure group $\SU(2)$. 
\end{proof}

\begin{rmk}
    It is very straightforward to generalize the discussion in this section to the case when the spectral cover is cyclic, i.e., $b_1=\cdots b_{r-1}=0$.  
\end{rmk}

\section{Compactness in general}
In this section, we will explain the role of the spectral cover in general when studying the limit of the renormalized Higgs fields. The rank two case with trace free Higgs fields obtained in Section \ref{Section 6} can also be seen as a variant of this by taking into consideration the special symmetry of the spectral cover. 

Below we assume $(A_i, \phi_i)$ is a sequence of solutions to the Vafa-Witten equation on a fixed unitary bundle $(E,H)$ over a compact K\"ahler manifold $(X, \omega)$ satisfying
$$
\lim_i \|\phi_i\| = \infty
$$
and we denote 
$$
\rho_i = \frac{\phi_i}{\|\phi_i\|}.
$$

\begin{lem}
By passing to a subsequence, $b_k(\rho_i)$ converges to $b_k^\infty\in H^0(X, K_X^{\otimes k})$ smoothly over $X$ for any $k$. Furthermore, $b_k^\infty \neq 0$ for some $k$. 
\end{lem}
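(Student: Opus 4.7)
The plan is to first use Corollary \ref{L2ImpliesC0} to upgrade the normalization $\|\rho_i\|_{L^2}=1$ to a uniform $C^0$ bound, then exploit that each $b_k(\rho_i)$ is a holomorphic section of the finite-dimensional space $H^0(X, K_X^{\otimes k})$ to extract a smoothly convergent subsequence, and finally combine the Weitzenb\"ock inequality from Proposition \ref{WeinzenbockFormula} with the pointwise norm comparison in Lemma \ref{MatrixNorm} to rule out $b_k^\infty\equiv 0$ for all $k$.

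For the convergence: Corollary \ref{L2ImpliesC0} gives $\|\rho_i\|_{C^0}\lesssim \|\rho_i\|_{L^2}=1$. Since $b_k$ is a degree-$k$ polynomial in the matrix entries of $\phi_i$, the pointwise estimate $|b_k(\rho_i)|\lesssim |\rho_i|^k$ produces a uniform $C^0$ bound on $b_k(\rho_i)$. As $b_k(\rho_i)$ is a holomorphic section of $K_X^{\otimes k}$ and $H^0(X, K_X^{\otimes k})$ is finite-dimensional, any $C^0$-bounded sequence in it admits a subsequence converging in $C^\infty$, since all norms on a finite-dimensional space of smooth sections are equivalent. A standard diagonal extraction across the finitely many indices $k=1,\dots,r$ produces $b_k^\infty\in H^0(X, K_X^{\otimes k})$ with $b_k(\rho_i)\to b_k^\infty$ smoothly for every $k$.

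For non-vanishing, the Weitzenb\"ock estimate in Proposition \ref{WeinzenbockFormula} gives $\|[\phi_i;\phi_i]\|_{L^2}^2\lesssim \deg(K_X)\,\|\phi_i\|_{L^2}^2$, whence
\[
\|[\rho_i;\rho_i]\|_{L^2}^2 \;\lesssim\; \frac{\deg(K_X)}{\|\phi_i\|_{L^2}^2}\;\longrightarrow\; 0
\]
(and in the borderline case $\deg K_X=0$, Corollary \ref{deg=0} already forces $[\phi_i;\phi_i]\equiv 0$). By Lemma \ref{MatrixNorm}(1),
\[
|\rho_i|^2 \;\sim\; |[\rho_i;\rho_i]|+\sum_{k=1}^{r}|b_k(\rho_i)|^{2/k}.
\]
Integrating over $X$, the left side stays equal to $1$, the $|[\rho_i;\rho_i]|$ term tends to $0$ by Cauchy--Schwarz, and the uniform $C^0$ convergence $b_k(\rho_i)\to b_k^\infty$ lets us pass the remaining terms to the limit by dominated convergence. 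Hence $\sum_k \int_X |b_k^\infty|^{2/k}\gtrsim 1$, so at least one $b_k^\infty$ is nonzero.

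The only subtle point is correctly identifying $b_k(\rho_i)$ as a genuinely holomorphic section of $K_X^{\otimes k}$ so that the finite-dimensionality of $H^0(X, K_X^{\otimes k})$ is available; the rest is a routine compactness and integration argument, patterned directly on the rank-two case handled in Proposition \ref{Cor5.2}.
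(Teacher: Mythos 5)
Your proposal is correct and follows essentially the same route as the paper: smooth subsequential convergence of $b_k(\rho_i)$ from a uniform bound in the finite-dimensional space $H^0(X,K_X^{\otimes k})$ (the paper deduces it from $\int_X|b_k(\rho_i)|^{2/k}\leq 1$ and elliptic theory, you from the $C^0$ bound of Corollary \ref{L2ImpliesC0} — an immaterial difference), and nontriviality of some $b_k^\infty$ by integrating the comparison of Lemma \ref{MatrixNorm} and using $\|[\rho_i;\rho_i]\|_{L^2}\to 0$, which comes from Proposition \ref{WeinzenbockFormula} exactly as in the paper.
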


\begin{proof}

The convergence follows from elliptic theory together with the simple fact that 
$$
|b_k(\rho_i)| \leq |\rho_i|^k
$$
which implies $\int_{X} |b_k(\rho_i)|^{\frac{2}{k}}\leq 1$. For the nontriviality of the limit, by Lemma \ref{MatrixNorm}, we obtain 
$$
|\rho_i|^2 \sim |[\rho_i;\rho_i]|+|b_1(\rho_i)|^2 + \cdots |b_n(\rho_i)|^{\frac{2}{n}}.
$$
Since by assumption 
$$
\|[\rho_i;\rho_i]\|_{L^2} \leq \frac{1}{r_i}
$$
which limits to zero while $\|\rho_i\|_{L^2}=1$, we obtain from the above that 
$$
\lim_i \int |[\rho_i;\rho_i]|+|b_1(\rho_i)|^2 + \cdots |b_n(\rho_i)|^{\frac{2}{n}} \sim 1
$$
while implies $b_k^\infty \neq 0$ for some $k\geq 1$.
\end{proof}

Now we can take the spectral cover 
$X_{b^\infty}$
where $b^\infty=(b_1^\infty, \cdots b^\infty_n)$. For the discussion below, we assume 
$$
\Delta_{b^\infty}\neq X
$$
which is equivalent to that the spectral cover $X_{b^\infty}$ is generically regular semi-simple. 

\begin{lem}\label{GenericallyRegularSemisimple}
For $i$ large, the spectral cover $X_{b(\rho_i)}$ is generically regular semi-simple. 
\end{lem}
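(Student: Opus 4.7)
The plan is to translate the statement into a non-vanishing statement about the discriminant section and use the smooth convergence $b_k(\rho_i)\to b_k^\infty$ established in the previous lemma. Recall from Definition~\ref{Defi2.5} that the discriminant locus is cut out by the discriminant polynomial evaluated on the spectral data, namely $\Delta_b = \{P(b_1,\dots,b_r)=0\}\subset X$, where $P(b_1,\dots,b_r)\in H^0(X, K_X^{\otimes r(r-1)})$. Thus $X_b$ fails to be generically regular semi-simple precisely when the holomorphic section $P(b)$ vanishes identically, i.e.\ $\Delta_b = X$.

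First I would note that the hypothesis $\Delta_{b^\infty}\neq X$ is exactly the statement $P(b_1^\infty,\dots,b_r^\infty)\not\equiv 0$. In particular there exists a point $x_0\in X$ with
\[
P\bigl(b_1^\infty,\dots,b_r^\infty\bigr)(x_0)\neq 0.
\]
Next, since $P$ is a universal polynomial in its entries and $b_k(\rho_i)\to b_k^\infty$ smoothly on $X$ by the preceding lemma, the composed sections satisfy
\[
P\bigl(b_1(\rho_i),\dots,b_r(\rho_i)\bigr)\xrightarrow{C^\infty(X)} P\bigl(b_1^\infty,\dots,b_r^\infty\bigr).
\]
Evaluating this convergence at $x_0$ and using the non-vanishing above, we conclude that for all $i$ sufficiently large,
\[
P\bigl(b_1(\rho_i),\dots,b_r(\rho_i)\bigr)(x_0)\neq 0,
\]
hence the section $P(b(\rho_i))$ is not identically zero, so $\Delta_{b(\rho_i)}\subsetneq X$ and $X_{b(\rho_i)}$ is generically regular semi-simple.

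There is really no main obstacle here: this is a soft continuity statement once one recognizes the discriminant as an honest holomorphic section of a fixed line bundle and invokes the already-established smooth convergence of the characteristic coefficients. The only minor care needed is to observe that $P$ makes sense as a polynomial map on the Hitchin base $\oplus_{k=1}^{r} H^0(X,K_X^{\otimes k})\to H^0(X,K_X^{\otimes r(r-1)})$ (with the natural tensor weights), so that smooth convergence of inputs passes to smooth convergence of outputs.
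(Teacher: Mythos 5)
Your proof is correct and takes essentially the same route as the paper: the paper's (much terser) argument likewise deduces the statement from the smooth convergence $b_k(\rho_i)\to b_k^\infty$ together with the standing assumption $\Delta_{b^\infty}\neq X$, and your write-up merely makes this precise by evaluating the discriminant section at a point where the limit is nonzero. (Minor aside: your tensor weight $K_X^{\otimes r(r-1)}$ for the discriminant is the standard one, whereas Definition~\ref{Defi2.5} writes $K_X^{\otimes r}$; this discrepancy is immaterial to the argument.)
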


\begin{proof}
This follows from that $b_k(\rho_i)\rightarrow b_k^\infty$. Thus the spectral cover converges to $X_{b(\rho_\infty)}$ at a generic point in a natural way. In particular, $\rho_i$ is generally regular semi-simple for $i$ large.
\end{proof}

Recall from the introduction, over the total space of the canonical bundle $K_X$, there exists a tautological line bundle $\Kcal\cong \pi^* K_X$ which has a global tautological section $\tau$ over $K_X$. Given any spectral cover $\pi: X_b \rightarrow X$ associated to $b\in \oplus_{i=1}^r H^0(X, K_X^{\otimes i})$, we denote 
$$
\Kcal^b=\Kcal|_{X_b}.
$$
Then $\pi_* \Kcal^b$ is a locally free sheaf of rank equal to $r$ away from the discriminant locus $\Delta_b$ and it has a tautological section 
$$
\tau_b=\pi_*(\tau|_{X_b}).
$$
Furthermore, it has a natural Hermitian metric induced by $K_X$ away from $\Delta_{b}$. 

Now we are ready to finish the proof of Theorem \ref{Main} which is essentially similar to the rank two case combined with the general analytic estimates in Proposition \ref{Prop3.13}. 

\begin{proof}[Proof of Theorem \ref{Main}]
Let $Y\subset X\setminus \Delta_{b_\infty}$ be any compact subset. Fix an arbitrary point $x\in Y$. By Lemma \ref{GenericallyRegularSemisimple}, for $i$ large, near $x$, we can decompose 
$$
(E, \dbar_{A_i})=\oplus_k \Lcal_{\lambda^k_i}
$$
where $\lambda^k_i$ is a holomorphic $(n,0)$ form near $x$ and $\rho_i$ acts on $\Lcal_{\lambda^k_i}$ as tensoring by $\lambda^k_i$. We also know $\lambda^k_i$ converge to some $\lambda^k_\infty \in X_{b^\infty}$. Define over $Y$
$$
\sigma_i: \pi_* \Kcal^{b_\infty} \rightarrow End(E) \otimes K_X
$$ 
by requiring
$$
\sigma_i(\lambda^k_\infty)=\pi_i^k \otimes \lambda^k_i
$$
over $U_x$ where $\pi_{i}^k$ denotes the orthogonal projections to $\Lcal_{\lambda_i^k}$. This fines $\sigma_i$ uniquely. Then for $i$ large, $\sigma^i$ can be glued together as a global section which we still denote as $\sigma_i: \pi_* \Kcal^{b_\infty} \rightarrow End(E) \otimes K_X$. We now prove that $\sigma_i$ satisfies the properties required.  For this, denote  the local holomorphic projections to $\Lcal_{\lambda^k_i}$ as $p_{i}^k: \Ecal \rightarrow \Lcal_{\lambda^k_i}$. we obtain then locally near $x$ as above
$$
\sigma_i \tau^{b_\infty}-\rho_i=\lambda_\infty^k \pi_{i}^k-\lambda_\infty^k \pi_{i}^k-(\lambda_i^k p^k_{i}-\lambda_i^k p^k_{i}).
$$
By Proposition \ref{Prop3.13}, we obtain locally on a smaller open set
$$
|p_{i}^k| \leq C, |\pi_{i}^k| \leq C,
$$
and 
$$
|\pi^k_{i}-p^k_{i}| \xrightarrow{C^0_{loc}(X\setminus \Delta_{b_\infty})} 0, |\nabla_{A_i} \pi^k_{i}| \xrightarrow{L^2_{loc}(X\setminus \Delta_{b_\infty})} 0, |\nabla_{A_i} p_i^{k}| \xrightarrow{L^2_{loc}(X\setminus \Delta_{b_\infty})} 0
$$
as $i\rightarrow \infty$ near $x$. Combined with $\lambda_i^k \rightarrow \lambda_\infty^k$, we get the desired properties for $\sigma_i$ over $Y\subset X\setminus \Delta_{b_\infty}$. Since we can take $Y$ to be a deformation restraction of $X\setminus Z$, $\sigma_i$ can be extended to be over $X\setminus Z$.  To get the statement about the convergence over $X\setminus Z$, this follows from a standard diagonal argument by exhausting $X\setminus Z$ with countably many precompact open subsets.
\end{proof}

\section{Examples}
We will analyze a few examples of moduli spaces of solutions to the Vafa-Witten equations by focusing on the rank two case. 
\subsection{Moduli space of $\SU(2)$ monopoles}
In this section, we study some properties of the moduli space $\Mcal^{mon}$ of monopoles mod gauge equivalence. The structure group of the bundle will be taken as $\SU(2)$.

\begin{lem}\label{Lem7.2}
Suppose $\Lcal$ is a holomorphic line bundle over $(X, \omega)$ with $c_1(\Lcal).[\omega]^{n-1}>0$. Then the Higgs bundle $(\Ecal= \Lcal \oplus \Lcal^{-1}, \phi)$ where 
$$
\phi=\begin{pmatrix}
0&0\\
\beta&0
\end{pmatrix}
$$
and $0\neq \beta \in H^0(X, K_X \otimes \Lcal^{-2})$ is a monopole.  
\end{lem}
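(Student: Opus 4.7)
The plan is to verify the three defining properties of a monopole one by one: irreducibility (via stability), $\CBbb^*$-invariance of the associated Higgs pair, and nonvanishing of $\phi$. The nonvanishing is immediate from $\beta\neq 0$, so the substance lies in the first two.

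First I would verify the $\CBbb^*$-invariance directly using Definition \ref{CStarInvariant}. For each $t\in \CBbb^*$, I try the diagonal automorphism $f_t=\diag(1,t)$ of $\Ecal=\Lcal\oplus \Lcal^{-1}$. A direct matrix computation gives
$$
t\phi\circ f_t=t\begin{pmatrix}0&0\\ \beta &0\end{pmatrix}\begin{pmatrix}1&0\\ 0&t\end{pmatrix}=\begin{pmatrix}0&0\\ t\beta&0\end{pmatrix}=\begin{pmatrix}1&0\\ 0&t\end{pmatrix}\begin{pmatrix}0&0\\ \beta&0\end{pmatrix}=f_t\circ \phi,
$$
so the Higgs pair is $\CBbb^*$-invariant. (This is also consistent with Proposition \ref{prop2.16}, where the decomposition is $\Ecal_{0,0}=\Lcal$ and $\Ecal_{-1,1}=\Lcal^{-1}$.)

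Next I would prove stability of $(\Ecal,\phi)$, since this (together with the Kobayashi--Hitchin correspondence for Higgs bundles recalled in the introduction) produces an irreducible solution $(A,\phi)$ to the Vafa--Witten equation. Note $\mu(\Ecal)=0$ because $\det\Ecal=\Ocal_X$. I would classify the $\phi$-invariant rank-one saturated subsheaves $\Fcal\subset \Ecal$. If $\Fcal$ projects trivially to the $\Lcal$-summand then $\Fcal\subset \Lcal^{-1}$, and by saturation $\Fcal=\Lcal^{-1}$, which is $\phi$-invariant since $\phi|_{\Lcal^{-1}}=0$; its slope is $-c_1(\Lcal).[\omega]^{n-1}<0=\mu(\Ecal)$ by hypothesis. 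If instead the projection to $\Lcal$ is generically nonzero, then for a generic local section $s=(a,b)$ of $\Fcal$ one has $a\neq 0$, but $\phi(s)=(0,\beta a)$ lies in $\Fcal\otimes K_X$ only if the $\Lcal\otimes K_X$-component matches the $\Fcal$-direction, forcing $a\equiv 0$ — a contradiction since $\beta\neq 0$. Hence $\Lcal^{-1}$ is the only candidate, stability follows, and in particular the pair is simple so the corresponding Vafa--Witten solution $(A,\phi)$ is irreducible (a nontrivial orthogonal splitting as Vafa--Witten solutions would give a $\phi$-invariant subsheaf of slope $\mu(\Ecal)$, contradicting stability).

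Finally, I combine the three ingredients: the Kobayashi--Hitchin/Hitchin--Simpson correspondence recalled above Lemma~\ref{Lemma6.4} gives an irreducible solution $(A,\phi)$ to the Vafa--Witten equation corresponding to the stable pair $(\Ecal,\phi)$; the pair is $\CBbb^*$-invariant by the computation above; and $\phi\neq 0$ because $\beta\neq 0$. By the definition of a monopole, this concludes the proof. The only step requiring genuine care is the classification of $\phi$-invariant subsheaves, and the hypothesis $c_1(\Lcal).[\omega]^{n-1}>0$ is used exactly once, to ensure $\mu(\Lcal^{-1})<0$.
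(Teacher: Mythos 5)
Your proof is correct and takes essentially the same route as the paper: the explicit automorphism $f_t=\diag(1,t)$ is exactly the scaling construction underlying Proposition \ref{prop2.16} (which the paper cites for $\CBbb^*$-invariance), and your classification of $\phi$-invariant rank-one subsheaves is the same argument the paper compresses into the observation that any such subsheaf must project nontrivially onto $\Lcal^{-1}$ and hence has negative slope. Your added remarks on irreducibility via stability and the Kobayashi--Hitchin correspondence are fine and are implicitly assumed in the paper's definition of a monopole.
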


\begin{proof}
By Proposition \ref{prop2.16}, $(\Ecal, \phi)$ is $\CBbb^*$ invariant. It remains to show that $(\Ecal, \phi)$ is stable. Suppose $\Lcal'$ is a rank one subsheaf of $\Ecal$ invariant under $\phi$. Then we observe the natural projection must map $\Lcal' \rightarrow \Lcal^{-1}$ nontrivially since $\phi$ maps $\Lcal$ to $K_X\otimes \Lcal^{-1}$ nontrivially. In particular, $\mu(\Lcal')<0$ and $\Ecal$ is stable. This finishes the proof.  
\end{proof}

Now we have 

\begin{prop}\label{Prop7.3}
The $\SU(2)$ monopoles are exactly of the form $(B, \beta)$ where  
\begin{itemize}
\item $B$ is a unitary connection on a line bundle $L$ with $F^{0,2}_B=0$;
\item $F_B \wedge \frac{\omega^{n-1}}{(n-1)!} = \beta \wedge \bar{\beta}$. 
\end{itemize}
where 
\begin{itemize}
    \item $\beta$ is a holomorphic section of $K_X \otimes \Lcal^{-2}$;
    \item locally if we write $\beta=\sigma e$ where $\sigma$ is a holomorphic $n$-form and $e$ is a unitary frame for $L$, then $\bar{\beta}=\bar{\sigma} e^*$ where $e^*$ denotes the metric dual of $e.$
    \item  $\Lcal=(L,\dbar_B)$.
\end{itemize}
In particular, $\int c_1(L)\wedge \omega^{n-1}>0$.
\end{prop}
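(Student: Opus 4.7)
The plan is to combine Proposition~\ref{Reducible} with the structural decomposition of Proposition~\ref{prop2.16} and the $\SU(2)$ constraint to reduce an arbitrary $\SU(2)$ monopole to the block form claimed, and then unpack the Vafa--Witten equations in that block form.

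First I would start with an $\SU(2)$ monopole $(A,\phi)$ and pick any $t\in S^1$ with $t\neq 1$. Proposition~\ref{Reducible} furnishes a unitary automorphism $f_t$ of $(E,\dbar_A)$ satisfying $\nabla_A f_t=0$ and $t\phi\circ f_t=f_t\circ\phi$; as already noted in that proposition, $f_t$ cannot be a scalar because $\phi\neq 0$. Being parallel forces the eigenvalues of $f_t$ to be constant, so $f_t$ has two distinct constant unitary eigenvalues on the rank-two bundle $E$, and the associated eigenbundles give a smooth orthogonal splitting $E=L_+\oplus L_-$ which is both parallel for $A$ and holomorphic for $\dbar_A$. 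The connection $A$ therefore splits as $A=B_+\oplus B_-$, and the $\SU(2)$ condition $\det\Ecal\cong\Ocal_X$ (compatibly with the metric and connection) forces $L_-=L_+^{-1}$ and $B_-$ to be the dual connection of $B_+$. Set $L:=L_+$ and $B:=B_+$.

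Next, the intertwining relation $\phi\circ f_t=t^{-1}f_t\circ\phi$ (or, alternatively, a direct appeal to Proposition~\ref{prop2.16}) shows that $\phi$ carries each eigenline of $f_t$ into the other, so in the splitting $E=L\oplus L^{-1}$ the Higgs field is strictly off-diagonal. One off-diagonal block gives a holomorphic map $L\to L^{-1}\otimes K_X$, i.e.\ a section $\beta\in H^0(X,K_X\otimes L^{-2})$, and the other block vanishes; the equation $\dbar_A\phi=0$ gives holomorphicity of $\beta$. The orientation of the splitting is pinned down by stability of $(\Ecal,\phi)$: since the monopole is irreducible, under the real analytic isomorphism $\Mcal^s_{Higgs}\cong\Mcal^*$ it corresponds to a stable Higgs pair, and the vanishing sub-line-bundle is automatically $\phi$-invariant, so stability requires it to have negative slope, orienting matters so that $\phi:L\to L^{-1}\otimes K_X$ and $\deg L>0$.

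Finally I would unwind the remaining Vafa--Witten equations in this block form. The $(0,2)$-vanishing $F_A^{0,2}=0$ restricts to $F_B^{0,2}=0$. A direct computation with $M=\begin{pmatrix}0&0\\ m&0\end{pmatrix}$ gives $[M,M^*]=\diag(-|m|^2,|m|^2)$, so the moment-map equation $\sqrt{-1}\Lambda_\omega F_A+[\phi;\phi]=\mu\id$ (with $\mu=0$ since $c_1(E)=0$) collapses to the single scalar identity $\sqrt{-1}\Lambda_\omega F_B=|\beta|^2$, which rewrites in top-form language, using the Hermitian identification $\bar L\cong L^{-1}$ and the standard normalizations, as $F_B\wedge\tfrac{\omega^{n-1}}{(n-1)!}=\beta\wedge\bar\beta$. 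Integrating over $X$ yields $\int_X c_1(L)\wedge[\omega]^{n-1}>0$ whenever $\beta\not\equiv 0$. The converse direction is supplied by Lemma~\ref{Lem7.2}: a pair $(B,\beta)$ satisfying these equations assembles into a Higgs bundle that is stable by that lemma, hence corresponds to an irreducible solution that is $\CBbb^*$-invariant with $\phi\neq 0$, i.e.\ a monopole. The main obstacle is not conceptual but notational: carefully tracking the Hermitian duality $\bar L\cong L^{-1}$ and the constants relating $\beta\wedge\bar\beta$ to $|\beta|^2\omega^n/n!$, but no new analytic input beyond Proposition~\ref{Reducible} is required.
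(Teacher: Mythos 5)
Your overall route is genuinely different from the paper's and is attractive: you use Proposition \ref{Reducible} to get a parallel intertwiner $f_t$, split the bundle by its eigenbundles, and then decouple the Vafa--Witten equations block by block, whereas the paper only uses the \emph{holomorphic} eigen-decomposition $\Ecal=\Lcal\oplus\Lcal^{-1}$, directly constructs a Hermitian metric $H_\Lcal$ solving the abelian equation $F_{H_\Lcal}\wedge\frac{\omega^{n-1}}{(n-1)!}=\beta\wedge\bar\beta$, and then invokes uniqueness of the Vafa--Witten metric (\cite[Proposition 3.10]{AG:03}) to conclude that the given unitary data coincides with this split model. That uniqueness step is exactly what your argument tries to bypass, and this is where you have a real gap: you assert that the eigenbundles of the parallel $f_t$ give an \emph{orthogonal} splitting. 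A parallel endomorphism of a Hermitian bundle need not be normal (already for a flat bundle a constant non-normal matrix is parallel), so its two eigenline bundles are each parallel and holomorphic but not automatically mutually orthogonal; note also that $f_t$ produced by Proposition \ref{Reducible} is not claimed to be unitary, so "distinct constant unitary eigenvalues" is unjustified as stated. Without orthogonality, $\phi$ is strictly off-diagonal only with respect to the holomorphic eigen-splitting, not with respect to the orthogonal parallel splitting $L_+\oplus L_+^{\perp}$ in which $A$ actually decomposes, and your computation $[M,M^*]=\diag(-|m|^2,|m|^2)$ and the ensuing decoupling into the abelian vortex equation do not go through in that frame.

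The gap is fixable, but it needs an argument, e.g.: since both eigenlines and the metric are parallel, the pairing between them is a parallel section of a Hermitian line bundle, so the two lines are either everywhere or nowhere orthogonal; in the latter case $L_1$, $L_2$ and $L_1^{\perp}$ are three pairwise distinct parallel lines, forcing the holonomy of the $\SU(2)$ connection to be scalar, hence $A$ flat, hence $[\phi;\phi]=0$, which is impossible for a nonzero nilpotent $\phi$ (monopoles are nilpotent by the $\CBbb^*$-invariance lemma). Relatedly, distinctness of the two eigenvalues should not be deduced merely from "$f_t$ is not a scalar'' (a parallel endomorphism can have a single eigenvalue without being scalar); the correct argument, as in Proposition \ref{prop2.16} and the paper's proof, is that a single eigenvalue $\lambda$ would force $\phi$ to land in the (trivial) generalized $t\lambda$-eigenbundle, contradicting $\phi\neq0$. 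With these two points repaired, the rest of your proposal (the $\SU(2)$ identification $L_-\cong L_+^{-1}$, the stability/orientation argument, the decoupled vortex equation, and the converse via Lemma \ref{Lem7.2} and Proposition \ref{prop2.16}) is sound.
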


\begin{proof}
Suppose $(A,\phi)$ is an $\SU(2)$-monopole. By definition, for any $1\neq t\in S^1$,  there exists an isomorphism $f: \Ecal \rightarrow \Ecal$ so that $f\circ \phi = t \phi \circ f.$ Since $b_1(f)$ and $b_2(f)$ are all constants, $f$ has constant eigenvalues. As Proposition \ref{prop2.16}, $f$ has two distinct eigenvalues $\lambda$ and $t\lambda$ and we can decompose 
$$
\Ecal=\Lcal \oplus \Lcal^{-1}
$$
where $$
\phi=\begin{pmatrix}
0&0\\
\beta&0
\end{pmatrix}
$$
for some $\beta\in H^0(X,\Lcal^{-2} \otimes K_X)$. By stability, $c_1(\Lcal).[\omega]^{n-1}>0.$ Given this, we can directly construct a Hermitian metric $H_{\Lcal}$ on $\Lcal$ with 
$$
F_{H_{\Lcal}} \wedge \frac{\omega^{n-1}}{(n-1)!} = \beta \wedge \bar{\beta}.
$$ 
In particular, the Chern connection of the induced metric $H_{\Lcal} \oplus H_{\Lcal^{-1}}$ on $\Ecal$ satisfies the Vafa-Witten equation. Then by uniqueness, the metric has to be a constant multiple of the Hermitian metric on the $\SU(2)$ bundle. The conclusion follows. 
\end{proof}

Summarizing the above, we have 
\begin{cor}\label{Monopoles}
The moduli space of $\SU(2)$ monopoles can be described as
$$\Mcal^{mon}=\{(\Lcal\oplus \Lcal^{-1}, \begin{pmatrix}
0&0\\
\beta&0
\end{pmatrix}): 0\neq \beta \in H^0(X, K_X \otimes \Lcal^{-2}), c_1(\Lcal).[\omega]^{n-1}> 0\}/\sim
$$
and $\Mcal^{mon}$ is compact. In particular, it is disjoint from the compactification of the moduli space of Hermitian-Yang-Mills connections.  
\end{cor}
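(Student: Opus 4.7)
The description of $\Mcal^{mon}$ is immediate by combining Lemma~\ref{Lem7.2} (every pair of the listed form is a monopole) and Proposition~\ref{Prop7.3} (every $\SU(2)$ monopole has this form), so the content of the proof is compactness of $\Mcal^{mon}$ together with the disjointness statement. The key preliminary observation is an a priori $L^2$ lower bound on the Higgs field of a monopole: integrating the pointwise identity $F_B\wedge\omega^{n-1}/(n-1)!=\beta\wedge\bar\beta$ from Proposition~\ref{Prop7.3} over $X$ shows $\|\beta\|_{L^2}^2$ to be a positive multiple of $c_1(\Lcal)\cdot[\omega]^{n-1}$, and the constraint that $K_X\otimes\Lcal^{-2}$ carry a nonzero section forces $c_1(\Lcal)\cdot[\omega]^{n-1}\le \tfrac12\deg K_X$, so the slope of $\Lcal$ is confined to a bounded positive range. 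Given a sequence $(A_i,\phi_i)$ of $\SU(2)$ monopoles I would first pass to a subsequence with $c_1(\Lcal_i)\in H^2(X,\ZBbb)$ constant, so that $\|\phi_i\|_{L^2}$ is a fixed positive constant. Nilpotency together with Proposition~\ref{prop4.1} yields a uniform $C^0$ bound on $\phi_i$ and hence a uniform $L^2$ bound on $F_{A_i}$; Theorem~\ref{NilpotentCompactness} then gives, after gauge transformations and a further subsequence, smooth convergence $(A_i,\phi_i)\to (A_\infty,\phi_\infty)$ on $X\setminus Z$ to a Vafa--Witten solution on a reflexive sheaf $\Ecal_\infty$, and dominated convergence (using $\|\phi_i\|_{C^0}\le C$ together with the fact that $Z$ has measure zero) gives $\|\phi_\infty\|_{L^2}>0$, so $\phi_\infty$ is a nonzero nilpotent Higgs field on $\Ecal_\infty$.

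The central step, and the one I expect to be the principal technical obstacle, is to upgrade these properties to show that $(\Ecal_\infty,\phi_\infty)\in\Mcal^{mon}$. For each $t\in S^1\setminus\{1\}$, Proposition~\ref{Reducible} supplies $A_i$-parallel unitary isomorphisms $f_t^i$ of $(E,\dbar_{A_i})$ with $f_t^i\circ\phi_i=t\phi_i\circ f_t^i$, normalised so that $|f_t^i|$ is a uniform positive constant. The parallel condition $\nabla_{A_i}f_t^i=0$ combined with the $C^\infty_{\mathrm{loc}}$-convergence of $A_i$ on $X\setminus Z$ delivers a subsequential limit $f_t^\infty$ which is $A_\infty$-parallel and satisfies $f_t^\infty\phi_\infty=t\phi_\infty f_t^\infty$; since $Z$ has codimension at least two and $|f_t^\infty|$ is bounded, a Bando--Siu type removable singularity argument extends $f_t^\infty$ across $Z$ to the reflexive sheaf $\Ecal_\infty$. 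A diagonal argument over a countable dense subset of $S^1$ together with holomorphic extension in the parameter then produces the full $\CBbb^*$-invariance of $(\Ecal_\infty,\phi_\infty)$. By Proposition~\ref{prop2.16}, the rank-two constraint and the fact that $\phi_\infty\ne 0$ is nilpotent then force $\Ecal_\infty=\Lcal_\infty\oplus\Lcal_\infty^{-1}$ with $\phi_\infty$ in the monopole form; in particular $\Ecal_\infty$ is locally free, so the limit lies in $\Mcal^{mon}$, establishing compactness.

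Finally, the disjointness from the compactification of the Hermitian--Yang--Mills moduli follows at once: every element of that compactification has $\phi=0$, whereas by the compactness just established the continuous function $\|\phi\|_{L^2}$ attains its minimum on $\Mcal^{mon}$ and this minimum is a positive topological quantity, so no sequence of monopoles can accumulate on an ideal Hermitian--Yang--Mills limit in any natural enlargement containing both moduli spaces.
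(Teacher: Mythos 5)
Your description of $\Mcal^{mon}$ and the disjointness statement are handled as in the paper (Lemma \ref{Lem7.2} plus Proposition \ref{Prop7.3}, and the fact that monopole Higgs fields never vanish). The compactness argument, however, is where you diverge, and as written it has a genuine gap. The paper's proof is essentially your opening observation and nothing more: integrating $F_B\wedge\omega^{n-1}/(n-1)!=\beta\wedge\bar\beta$ gives $\|\beta\|_{L^2}^2\sim c_1(\Lcal).[\omega]^{n-1}\leq \tfrac12 c_1(K_X).[\omega]^{n-1}$, so by Proposition \ref{Prop7.3} the moduli space is identified with the set of pairs consisting of a holomorphic line bundle $\Lcal$ of bounded positive degree together with a nonzero $\beta\in H^0(X,K_X\otimes\Lcal^{-2})$ up to equivalence, and this parameter space is compact; no gauge-theoretic limiting argument is needed. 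Your detour through Theorem \ref{NilpotentCompactness} proves something strictly weaker than compactness of $\Mcal^{mon}$: an Uhlenbeck limit is a limit only away from the bubbling set $Z$, possibly on a different sheaf with different second Chern character, so even after you show the limiting pair $(\Ecal_\infty,\phi_\infty)$ is a monopole you have not shown that the original sequence converges to it \emph{in the topology of} $\Mcal^{mon}$. You would still need to rule out bubbling ($Z=\emptyset$, all $m_k=0$) and upgrade to global convergence on the fixed $\SU(2)$ bundle; this is never addressed (it can be done, e.g.\ by comparing $c_2(E)=-c_1(\Lcal_i)^2$ with the current identity in Theorem \ref{NilpotentCompactness} once $c_1(\Lcal_i)$ is fixed, but that argument is absent from your write-up).

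Two further steps are asserted rather than proved. First, extracting a subsequence with $c_1(\Lcal_i)$ constant presupposes that only finitely many classes occur; this needs the effectivity of $K_X\otimes\Lcal_i^{-2}$ together with the degree bound, and it is exactly this finiteness that makes the paper's parameter space compact, so the real content of the corollary is hiding in a step you treat as automatic. Second, for Proposition \ref{prop2.16} to apply to the limit you need $f_t^\infty$ to be invertible with two distinct constant eigenvalues; with the normalization $|f_t^i|\equiv\mathrm{const}$ alone the limit could a priori be nilpotent, and Proposition \ref{Reducible} does not state unitarity, so you should instead use the explicit form $f_t^i=\diag(1,t)$ in the orthogonal splitting $\Lcal_i\oplus\Lcal_i^{-1}$ coming from the proof of Proposition \ref{Prop7.3} to pin the eigenvalues. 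None of this is fatal to your strategy, but as written the compactness claim does not follow, whereas the direct algebro-geometric argument you set up in your first paragraph is both what the paper does and considerably shorter.
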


\begin{proof}
The description of $M^{mon}$ follows directly from Lemma \ref{Lem7.2} and Proposition \ref{Prop7.3}. Since
$$
\int |\beta|^2\sim c_1(L).[\omega]^{n-1}\leq \frac{1}{2}c_1(K_X).[\omega]^{n-1},
$$
the space of such line bundles and holomorphic sections has to be compact. The second part follows from the fact that the Higgs fields of the monopoles are always nonzero. 
\end{proof}

\subsection{Stable trivial Higgs bundles}\label{Section8.2}
In this section,  we study the moduli space 
$$
\Mcal_0:=\{(\Ocal \oplus \Ocal, \phi) \text{ is stable }: \phi \in H^0(X, \End(\Ocal \oplus \Ocal) \otimes K_X)  \}/\sim.
$$
We first note
\begin{lem}
The $\CBbb^*$ invariant locus in $\Mcal_0$ is empty. 
\end{lem}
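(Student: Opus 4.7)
The plan is to rule out $\CBbb^*$-invariant stable Higgs pairs on $\Ocal\oplus \Ocal$ by combining the structural decomposition from Proposition \ref{prop2.16} with the elementary fact that a line sub-bundle of a trivial bundle has non-positive degree.

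First I would dispose of the case $\phi=0$: stability of $(\Ocal\oplus \Ocal,0)$ would require $\mu(\Fcal)<\mu(\Ocal\oplus\Ocal)=0$ for every proper $\phi$-invariant subsheaf $\Fcal$, but the canonical summand inclusion $\Ocal\hookrightarrow \Ocal\oplus \Ocal$ is automatically $\phi$-preserved and has slope $0$, contradicting stability. Hence any $\CBbb^*$-invariant element of $\Mcal_0$ must have $\phi\neq 0$.

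Next I would assume $\phi\neq 0$ and apply Proposition \ref{prop2.16} to obtain a holomorphic decomposition $\Ocal\oplus\Ocal = \bigoplus_{i,j} \Ecal_{i,j}$ with $\phi(\Ecal_{i,j})\subset \Ecal_{i-1,j+1}\otimes K_X$. Since $\phi\neq 0$, at least two summands are non-zero, and since the total rank is $2$, there are exactly two, each of rank one. Write $\Ocal\oplus \Ocal=\Lcal_1\oplus \Lcal_2$ with $\phi:\Lcal_1\to \Lcal_2\otimes K_X$ non-zero. The line sub-bundle $\Lcal_2$ is $\phi$-invariant because $\phi(\Lcal_2)$ is forced into an adjacent summand which vanishes.

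The key step is a degree bound for line sub-bundles of the trivial bundle. The inclusion $\Lcal_i\hookrightarrow \Ocal\oplus \Ocal$ corresponds to a non-zero element of $H^0(X,\Lcal_i^{-1})\oplus H^0(X,\Lcal_i^{-1})$, so $\Lcal_i^{-1}$ admits a non-zero global section and hence defines an effective divisor, giving $\deg \Lcal_i^{-1}\geq 0$, i.e.\ $\deg \Lcal_i\leq 0$. Combined with $\deg \Lcal_1+\deg \Lcal_2=\deg \det(\Ocal\oplus \Ocal)=0$, both line bundles are of degree zero. In particular $\mu(\Lcal_2)=0=\mu(\Ocal\oplus\Ocal)$, contradicting stability of $(\Ocal\oplus\Ocal,\phi)$ against the $\phi$-invariant subsheaf $\Lcal_2$. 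There is essentially no hard step: the only minor point to verify is that $\phi\neq 0$ forces the decomposition of Proposition \ref{prop2.16} to consist of two genuine line sub-bundles rather than a single summand, which is immediate since a single-summand decomposition would render $\phi$ zero.
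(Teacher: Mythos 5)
Your proof is correct, and it takes a more self-contained route than the paper. The paper disposes of the lemma in two lines by citing its structure theorem for $\SU(2)$ monopoles (Proposition \ref{Prop7.3}): monopoles are exactly of the form $\Lcal\oplus\Lcal^{-1}$ with $c_1(\Lcal).[\omega]^{n-1}>0$, which is incompatible with the trivial bundle, so the $\CBbb^*$-invariant locus could only consist of pairs with $\phi=0$, which are unstable. That argument leans on the monopole classification, whose proof in turn uses the Hitchin--Kobayashi correspondence and uniqueness of the Hermitian--Einstein metric, and is phrased in the trace-free $\SU(2)$ setting. You instead work purely holomorphically: after ruling out $\phi=0$ by the slope-zero summand $\Ocal$, you apply Proposition \ref{prop2.16} directly to a $\CBbb^*$-invariant pair to split $\Ocal\oplus\Ocal=\Lcal_1\oplus\Lcal_2$ with $\phi:\Lcal_1\to\Lcal_2\otimes K_X$ and $\Lcal_2$ automatically $\phi$-invariant (the index bookkeeping rules out $\phi$ mapping $\Lcal_2$ back to $\Lcal_1$), then use that each $\Lcal_i$ injects into $\Ocal^{\oplus 2}$, hence $\deg\Lcal_i\le 0$, while $\deg\Lcal_1+\deg\Lcal_2=0$ forces both to vanish, so $\Lcal_2$ destabilizes. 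This buys independence from Section 8.1: it needs no analytic input beyond the algebraic fixed-point decomposition, and it covers $\Mcal_0$ as defined (no trace-free assumption) without any implicit reduction to the $\SU(2)$ case, at the cost of being longer than the paper's citation-based argument. Both proofs ultimately rest on the same structural fact, Proposition \ref{prop2.16}, the paper reaching it through Proposition \ref{Prop7.3} and you invoking it directly.
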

\begin{proof}
By Proposition \ref{Prop7.3}, there are no monopoles in $\Mcal_0$. In particular, the $\CBbb^*$ invariant locus must all have zero Higgs fields which is impossible by the stability condition.
\end{proof}

Below we fix a basis for $H^0(X, K_X)$ as $\{\sigma_i\}_{i=1}^m$, then by definition, we can always write 
$$
\phi=\sum_{i=1}^m M_i \sigma_i
$$
where $M_i$ are all rank two matrices. We have the following simple observation

\begin{lem}
$(\Ocal\oplus \Ocal, \phi)$ is stable if and only if $M_1, \cdots M_m$ have no common eigenvectors.
\end{lem}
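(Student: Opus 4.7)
The plan is to set up a dictionary between $\phi$-invariant rank one subsheaves of $\Ocal\oplus\Ocal$ and common eigenvectors of the constant matrices $M_1,\dots,M_m$, and then read off both implications. Note that $\mu(\Ecal)=0$, so stability is equivalent to saying every $\phi$-invariant rank one subsheaf has strictly negative degree.

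For the easy direction, suppose $v\in\CBbb^2\setminus\{0\}$ is a common eigenvector with $M_i v=\lambda_i v$ for every $i$. Then $\Fcal:=\Ocal\cdot v\hookrightarrow\Ocal\oplus\Ocal$ is a rank one subsheaf of degree zero, and $\phi(v)=(\sum_i \lambda_i\sigma_i)\,v$ lies in $\Fcal\otimes K_X$, so $\Fcal$ is $\phi$-invariant. This destabilizes $(\Ocal\oplus\Ocal,\phi)$.

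For the converse, assume $(\Ocal\oplus\Ocal,\phi)$ is not stable and pick a $\phi$-invariant rank one subsheaf $\Fcal$ with $\deg\Fcal\geq 0$. Replace $\Fcal$ by its saturation $\Lcal\subset\Ocal\oplus\Ocal$, which is a line subbundle, remains $\phi$-invariant (since $\phi$ descends to the torsion-free quotient $\Ecal/\Lcal$), and satisfies $\deg\Lcal\geq\deg\Fcal\geq 0$. Projecting onto either factor produces a nonzero map $\Lcal\to\Ocal$, hence a nonzero holomorphic section of $\Lcal^{-1}$. On the compact K\"ahler manifold $(X,\omega)$, the zero locus of such a section is an effective analytic divisor representing $c_1(\Lcal^{-1})$, whose $[\omega]^{n-1}$-degree is nonnegative, and is zero exactly when the divisor is empty, i.e.\ when $\Lcal^{-1}\cong\Ocal$. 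Combining with $\deg\Lcal\geq 0$ forces $\Lcal\cong\Ocal$, and therefore the inclusion $\Lcal\hookrightarrow\Ocal\oplus\Ocal$ is given by a constant section $v\in\CBbb^2\setminus\{0\}$. The $\phi$-invariance then reads $\sum_i(M_i v)\sigma_i\in\CBbb v\otimes K_X$; decomposing $M_iv=c_iv+d_iw$ in a basis $\{v,w\}$ of $\CBbb^2$ and using the linear independence of $\sigma_1,\dots,\sigma_m$ in $H^0(X,K_X)$ forces each $d_i=0$, so $v$ is a common eigenvector of all the $M_i$.

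The only nontrivial ingredient is the degree bound on line subbundles of the trivial rank two bundle: a nonzero holomorphic section of a line bundle on a compact K\"ahler manifold has $[\omega]^{n-1}$-nonnegative zero divisor, with vanishing only when the line bundle is trivial. This is the standard positivity fact from K\"ahler geometry and is the main substantive input; everything else is formal linear algebra together with the preservation of $\phi$-invariance under saturation.
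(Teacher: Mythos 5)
Your proof is correct and follows essentially the same route as the paper: the forward direction is identical, and for the converse both arguments reduce the destabilizing subsheaf to a trivial line bundle generated by a constant vector $v\in\CBbb^2$ (the paper by dualizing to get a section of $\Lcal^*$, you by saturating and projecting to a factor to get a section of $\Lcal^{-1}$) and then use linear independence of the $\sigma_i$ to see that $v$ is a common eigenvector. The only quibble is terminological: the saturation is a rank one reflexive sheaf, hence a line bundle on the smooth $X$, but not necessarily a sub\emph{bundle} of $\Ocal\oplus\Ocal$ in dimension $\geq 2$; this does not affect your argument, which only uses that it is a line bundle mapping nontrivially to $\Ocal$.
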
	

\begin{proof}
Suppose $v$ is a common eigenvector for $M_1, \cdots M_m$. Then consider 
$$
\Ocal \rightarrow \Ocal \oplus \Ocal, 1 \mapsto v
$$
which gives a rank one subsheaf of $\Ocal \oplus \Ocal$ invariant under $\phi$. This violates the stability of $(\Ocal \oplus \Ocal, \phi)$. Now assume $(\Ocal\oplus \Ocal, \phi)$ is not stable, i.e., there exists some rank one subsheaf $\Lcal\subset \Ocal \oplus \Ocal$ invariant under $\phi$ with 
$$
c_1(\Lcal).[\omega]^{n-1}=0.
$$
By considering $(\Ocal \oplus \Ocal)^* \rightarrow \Lcal^*$ which is nontrivial, we obtain $\Lcal^*$ has a section, thus $\Lcal^*$ must be trivial and so is $\Lcal$. Thus $\Lcal$ is generated by a constant vector $v$ invariant under $\phi$. It also satisfies
$$
\phi v = \sigma v 
$$
for some $\sigma\in H^0(X, K_X)$. Write $\sigma=\sum_i \lambda_i \sigma_i$, then 
$$
M_i v= \sigma_i v
$$
for all $i$, i.e., $\{M_i\}_{i=1}^m$ have a common eigenvector. 
\end{proof}

We have the following well-known linear algebra lemma
\begin{lem}
Two complex rank two  matrices $M$ and $N$ have no common eigenvectors if and only if 
$\det(MN-NM)\neq 0.$
\end{lem}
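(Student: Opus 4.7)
The forward direction is the easy one: if $M$ and $N$ share an eigenvector $v$, complete $v$ to a basis so that $M$ and $N$ are simultaneously upper triangular. In such a basis every commutator of upper triangular matrices has vanishing diagonal, so $[M,N]=MN-NM$ is strictly upper triangular, hence nilpotent, and in particular $\det(MN-NM)=0$.

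For the converse, my plan is to exploit that, because $\tr(MN-NM)=0$ automatically, the additional hypothesis $\det(MN-NM)=0$ forces both eigenvalues of the $2\times 2$ matrix $[M,N]$ to be zero, so $[M,N]$ is nilpotent. I would then do a short case analysis on the Jordan type of $M$. If $M$ is a scalar matrix, any eigenvector of $N$ is a common eigenvector. If $M$ has two distinct eigenvalues, diagonalize it as $M=\diag(a,b)$ and write $N=\begin{pmatrix}p&q\\ r&s\end{pmatrix}$; a one-line computation gives
$$
[M,N]=\begin{pmatrix}0 & (a-b)q\\ -(a-b)r & 0\end{pmatrix},\qquad \det[M,N]=(a-b)^2\,qr,
$$
so $\det[M,N]=0$ forces $q=0$ or $r=0$, and then $e_1$ or $e_2$ is a common eigenvector of $M$ and $N$. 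If $M$ has a single Jordan block, conjugate it to $M=aI+E_{12}$; writing $N$ as above, a direct computation yields
$$
[M,N]=[E_{12},N]=\begin{pmatrix}r & s-p\\ 0 & -r\end{pmatrix},\qquad \det[M,N]=-r^2,
$$
so the hypothesis gives $r=0$, and $e_1$ is simultaneously the unique eigendirection of $M$ and an eigenvector of $N$.

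There is essentially no obstacle: the substantive content is simply recognizing that, in dimension two, $\tr=0$ turns the determinant condition into the nilpotence of $[M,N]$, after which the three Jordan cases above are one-line computations. The rank-two hypothesis is essential; in higher rank one needs the stronger Shemesh-type criterion in terms of the intersection $\bigcap_{j,k}\ker[M,N]^j\cap\ker\cdots$, but that is not needed here.
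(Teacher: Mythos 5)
Your proof is correct and complete: the forward direction via simultaneous upper-triangularization and the converse via the three Jordan cases for $M$ (scalar, two distinct eigenvalues, single Jordan block) all check out, and each determinant computation is right. The paper itself states this lemma as ``well-known'' and gives no proof, so there is nothing to compare against; your argument simply supplies the standard elementary verification (the remark about nilpotence of $[M,N]$ is harmless but not actually needed, since you use $\det[M,N]=0$ directly in each case).
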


In particular, this gives 
\begin{cor}\label{Cor8.7}
$(\Ocal\oplus \Ocal, \phi=M \sigma_1+N \sigma_2)$ is stable if and only if 
$$
\det(MN-NM)\neq 0.
$$
In particular, $\Mcal_0$ is nonempty over compact K\"ahler manifolds with 
$$\dim (H^0(X, K_X))\geq 2.$$
\end{cor}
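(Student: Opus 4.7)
The first statement is immediate from chaining together the two preceding lemmas. Specialize the criterion ``$(\Ocal\oplus\Ocal,\phi)$ is stable iff $M_1,\dots,M_m$ have no common eigenvectors'' to the case $m=2$ with $\phi = M\sigma_1 + N\sigma_2$, and then substitute the linear algebra fact that two rank two complex matrices $M,N$ share no common eigenvector if and only if $\det(MN-NM) \neq 0$. This gives the equivalence.

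For the nonemptiness claim, the plan is to produce an explicit stable pair. Suppose $\dim H^0(X,K_X) \geq 2$, and pick any two linearly independent holomorphic $n$-forms $\sigma_1,\sigma_2 \in H^0(X,K_X)$. Then I would set
\[
M = \begin{pmatrix} 0 & 1 \\ 0 & 0 \end{pmatrix}, \qquad N = \begin{pmatrix} 0 & 0 \\ 1 & 0 \end{pmatrix},
\]
and define $\phi = M\sigma_1 + N\sigma_2 \in H^0(X, \End(\Ocal\oplus\Ocal)\otimes K_X)$. A direct computation gives
\[
MN-NM = \begin{pmatrix} 1 & 0 \\ 0 & -1 \end{pmatrix},
\]
so $\det(MN-NM) = -1 \neq 0$. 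By the first part of the corollary, $(\Ocal\oplus\Ocal,\phi)$ is stable and hence determines a point of $\Mcal_0$.

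There is essentially no obstacle here, since both ingredients (the stability reformulation via common eigenvectors and the determinantal characterization of shared eigenvectors for $2\times 2$ matrices) are already in hand. The only thing worth double-checking is that the global section $\phi$ is indeed well-defined, but this is automatic: $M, N$ are constant endomorphisms of the trivial bundle $\Ocal\oplus\Ocal$, and tensoring by the global holomorphic forms $\sigma_1,\sigma_2$ produces a genuine global section of $\End(\Ocal\oplus\Ocal)\otimes K_X$.
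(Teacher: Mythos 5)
Your proposal is correct and follows exactly the route the paper intends: the corollary is stated as an immediate consequence of the two preceding lemmas (the common-eigenvector criterion for stability of $(\Ocal\oplus\Ocal,\phi)$ and the determinantal characterization $\det(MN-NM)\neq 0$ for $2\times 2$ matrices), and your explicit example $M=\begin{pmatrix}0&1\\0&0\end{pmatrix}$, $N=\begin{pmatrix}0&0\\1&0\end{pmatrix}$ with $\det(MN-NM)=-1$ correctly supplies the nonemptiness claim that the paper leaves implicit.
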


\bibliography{papers}
\end{document}